\documentclass[12pt,reqno]{amsart}
\usepackage{amssymb,amsmath,amsthm,fullpage,enumerate}
\usepackage{xcolor}
\usepackage{biblatex} 
\addbibresource{refs.bib} 

\usepackage{verbatim}
\usepackage{cases}

\usepackage[OT2,OT1]{fontenc}

\DeclareFontFamily{OT2}{cmr}{\hyphenchar\font45 }
\DeclareFontShape{OT2}{cmr}{m}{n}{<->wncyr10}{}
\DeclareFontShape{OT2}{cmr}{m}{it}{<->wncyi10}{}
\DeclareFontShape{OT2}{cmr}{m}{sc}{<->wncysc10}{}
\DeclareFontShape{OT2}{cmr}{b}{n}{<->wncyb10}{}
\DeclareFontShape{OT2}{cmr}{bx}{n}{<->ssub*wncyr/b/n}{}

\DeclareFontFamily{OT2}{cmss}{\hyphenchar\font45 }
\DeclareFontShape{OT2}{cmss}{m}{n}{<->wncyss10}{}

\DeclareRobustCommand\cyr{\fontencoding{OT2}\selectfont}
\DeclareTextFontCommand{\textcyr}{\cyr}


\DeclareUnicodeCharacter{0306}{}

\newtheorem{theorem}{Theorem}[section]
\newtheorem{prop}[theorem]{Proposition}

\newtheorem{corr}[theorem]{Corollary}


\theoremstyle{definition}

\newtheorem{deff}[theorem]{Definition}

\theoremstyle{remark}
\newtheorem{comm}[theorem]{Remark}

\newcommand{\R}{\mathbb R}

\newcommand{\C}{\mathbb C}

\newcommand{\p}{\partial}

\newcommand{\dbar}{\bar\partial}

\DeclareMathOperator{\im}{Im}
\DeclareMathOperator{\re}{Re}

\newcommand{\bc}{\mathbb{B}}

\newcommand{\sca}{\text{Sc}\,}
\newcommand{\vect}{\text{Vec}\,}

\newcommand{\disk}{\mathbb{D}}

\title{Bicomplex Hardy Classes of Solutions to Beltrami Equations and the Schwarz Boundary Value Problem}

\author{William L. Blair}
\address{Department of Mathematics\\
  The University of Texas at Tyler\\
  Tyler, TX 75799}
\email{wblair@uttyler.edu}


\keywords{Hardy spaces, Beltrami equation, bicomplex numbers, Schwarz boundary value problem}


\subjclass[2010]{30H10, 30C62, 30G30, 30G20, 35J46}

\begin{document}

\begin{abstract}
    We define Hardy classes of bicomplex-valued functions on the complex unit disk which solve bicomplex versions of the Beltrami and related equations. Using representations in terms of their complex-valued counterparts, we show these bicomplex-valued functions recover the boundary behavior associated with the classic holomorphic Hardy spaces. This work generalizes known results for complex-valued functions and continues recent work in the setting of bicomplex analogues of Hardy spaces of both holomorphic and generalized analytic functions. Also, we show Schwarz and Dirichlet boundary value problems associated with the bicomplex Beltrami equation are solvable and provide solution formulas.
\end{abstract}

\maketitle

\section{Introduction}

We work to extend the theory of Hardy spaces and boundary value problems to bicomplex-valued functions on the complex unit disk that solve a bicomplex Beltrami equation.

    In contrast to ordinary differential equations, there is no unifying theory of partial differential equations or their solutions. The study of differential equations that satisfy an ellipticity condition, with its close connection to application and modeling of physical phenomenon, is an ongoing success. In the complex plane, the first-order elliptic equations are reduced to the Beltrami equation (and some other related equations). See \cite{Bo, Vek}.

    The classic Hardy spaces of holomorphic functions on the disk are a triumph of complex function theory. Originally pioneered by the Riesz brothers (after Hardy), Zygmund widely communicated the fundamental results of these function spaces in \cite{Zygmund}. See also \cite{Duren, Koosis, BAF, RudinCR, Rep, Pavlovic} for more modern sources. Our interest in the Hardy spaces is that they are the holomorphic functions with $L^p$ boundary values. Not only do these functions have boundary values in $L^p$, but these functions converge to these boundary values in the corresponding $L^p$ norm. This makes them precisely the class of functions to consider in the pursuit of holomorphic solutions to boundary value problems.

    Recently, Hardy classes of solutions to other first-order partial differential equations were considered as generalizations of the holomorphic Hardy spaces. Two common themes in these classes of functions are:
        \begin{enumerate}
            \item The class of functions is defined to be the collection of solutions to a complex partial differential equation with finite $H^p$ norm, where this is the classic $H^p$ norm of the holomorphic Hardy spaces. 
            \item The class of functions must recover the Hardy space boundary behavior of having $L^p$ boundary values on the circle and convergence to those boundary values in the $L^p$ norm. 
        \end{enumerate}
    Differential equations considered, so far, include the Vekua equation \cite{KlimBook, threedvek, BoundExtVek, PozHardy}, the Beltrami equation \cite{Zins, quasiHardy, quasiHardy2, quasiHardy3, quasiHardy4, quasiHardy5, Klim2014, KlimBook, Klim2016Pathlogical}, the conjugate Beltrami equation \cite{conjbel, BoundedExtremal, CompOp, moreVekHardy}, the general first-order elliptic equation \cite{Klim2016}, the higher-order Cauchy-Riemann equations associated with polyanalytic functions \cite{polyhardy}, the higher-order Vekua equations associated with the meta-analytic functions \cite{metahardy, WB3}, general nonhomogeneous Cauchy-Riemann equations \cite{WB}, and the general higher-order iterated Vekua equations \cite{WBD}.

    The bicomplex numbers are a four real-dimensional (two complex-dimensional) extension, originaly by Segre \cite{Segre}, of the complex numbers that, unlike the quaternions, have a commutative multiplication. However, division is not well defined for the bicomplex numbers, as there are bicomplex non-zero zero divisors. The bicomplex numbers are a useful tool in studying the complex stationary Schr\"odinger equation. Specifically, the differential operator  associated with the complex stationary Schr\"odinger equation is factorized using bicomplex numbers into two first-order operators with one of them associated with a bicomplex version of the Vekua equation. Solutions of this bicomplex Vekua equation defined on the complex unit disk were studied in \cite{CastaKrav, FundBicomplex, KravAPFT, ComplexSchr}. In particular, the author considered Hardy classes of solutions to this bicomplex Vekua equation in \cite{BCHoiv} and showed that they recover the classic Hardy space boundary behavior. See also \cite{BCTransmutation, BCBergman} where Bergman spaces of solutions to the bicomplex Vekua equation were considered. The key feature of solutions to the bicomplex Vekua equation that allows the function theory of the complex Hardy classes of solutions to Vekua equations to be extended is that solutions to the bicomplex Vekua equation can be represented as a linear combination of a solution to a complex Vekua equation and a complex conjugate of a solution to a complex Vekua equation. This motivated the question: ``For what other kinds of bicomplex partial differential equations are solutions representable by solutions of associated complex partial differential equations?" In this work, we show this behavior continues for the bicomplex Beltrami equation and some other related equations. Also, we work to show that two of the classic boundary value problems of the complex plane, the Schwarz and Dirichlet problems, can be solved for the bicomplex Beltrami equation. While this is known for the complex Beltrami equation, see \cite{BeltramiSchwarz}, both of these problems have not been well studied yet for bicomplex-valued functions. See \cite{BCSchwarz} for another consideration of these problems for bicomplex-valued functions.

We describe the layout of the paper. In Section \ref{section: background}, we provide relevant background and results from the literature. In Section \ref{section: beleqn}, we define the bicomplex version of the Beltrami equation, the associated Hardy classes of solutions, and prove the boundary behavior of these functions. We show these classes of functions have representations in terms of functions in Hardy classes of solutions to complex Beltrami equations and recover the boundary behavior of the classic complex holomorphic Hardy spaces. Also, we consider higher-order Beltrami equations in the complex and bicomplex settings. We demonstrate that solutions of these higher-order equations in a Hardy class have representation formulas in terms of solutions to the first-order equation and recover Hardy space boundary behavior. This is the first time Hardy classes of solutions to higher-order Beltrami equations have been considered. In Sections \ref{section: conjbel} and \ref{section: GFOE}, we recover many of the results from Section \ref{section: beleqn} when the bicomplex Beltrami equation is replaced with bicomplex variants of conjugate Beltrami equations or general first-order elliptic equations. In Section \ref{section: bvp}, we consider bicomplex versions of the Schwarz and Dirichlet problem for the bicomplex Beltrami equation, show these problems are solvable, and provide formulas for the solutions.

\section{Background}\label{section: background}

    \subsection{Complex Origins}

        Let $\disk$ denote the complex unit disk centered at the origin, and $\p\disk$ denote its boundary. By $L^p(\disk)$, we mean the Lebesgue space of complex-valued functions with $p$-integrable modulus over $\disk$, and by $C^k(\p \disk)$, the space of $k$-times continuously differentiable functions on $\p\disk$. We denote by $W^{k,p}(\disk)$ the Sobolev spaces of complex-valued functions that along with their derivatives up to order $k$ are in $L^p(\disk)$. We denote by $\mathcal{D}'(\p\disk)$ the distributions on $\p \disk$. To disambiguate from other notions of conjugation that will be presented later, we indicate the complex conjugate of $z = x +iy \in \C$ by $z^* = x-iy$. We use $\frac{\p}{\p z}$ and $\frac{\p}{\p z^*}$ for the usual first-order complex differential operators with respect to the variable $z$ and its complex conjugate, respectively.

        \subsubsection{Holomorphic Functions}

        To begin, we recall the classic definition of complex holomorphic functions defined on the complex unit disk, the associated Hardy spaces, and a theorem that describes the boundary behavior of the Hardy spaces.

        \begin{deff}
            We denote by $Hol(\disk)$ the complex holomorphic functions on the complex unit disk, i.e., the collection of functions $f:\disk\to\C$ such that
            \[
                \frac{\p f}{\p z^*}  = 0.
            \]      
            For $0 < p < \infty$, we denote by $H^p(\disk)$ the complex holomorphic Hardy spaces, i.e., the collections of functions $f \in Hol(\disk)$ such that
            \[
                ||f||_{H^p(\disk)}:= \left( \sup_{0 < r < 1} \frac{1}{2\pi}\int_0^{2\pi} |f(re^{i\theta})|^p\,d\theta \right)^{1/p} < \infty.
            \]      
        \end{deff}

        \begin{theorem}[\cite{Duren, Koosis, Rep, RudinCR, BAF}]\label{bvcon}
        A function $f \in H^p(\disk)$, $0 < p < \infty$, has nontangential boundary values $f_{nt} \in L^p(\partial \disk)$ at almost every point of $\p \disk$, 
        \[
            \lim_{r\nearrow 1} \int_0^{2\pi} |f(re^{i\theta})|^p \, d\theta = \int_0^{2\pi} |f_{nt}(e^{i\theta})|^p \,d\theta,
        \]
        and
        \[
            \lim_{r\nearrow 1} \int_0^{2\pi} |f(re^{i\theta})- f_{nt}(e^{i\theta})|^p \, d\theta = 0.
        \]
        \end{theorem}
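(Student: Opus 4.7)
The plan is to follow the classical outline from the references cited. First I would establish monotonicity of the integral means: since $\log|f|$ is subharmonic for $f\in Hol(\disk)$, so is $|f|^p$ for each $p>0$, and the sub-mean-value property forces
\[
    M_p^p(r,f):=\tfrac{1}{2\pi}\int_0^{2\pi}|f(re^{i\theta})|^p\,d\theta
\]
to be non-decreasing on $(0,1)$. Hence $\lim_{r\nearrow 1}M_p^p(r,f)$ exists and equals $||f||_{H^p(\disk)}^p$, which is already half of the second asserted identity.

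Next, for existence of nontangential boundary values, I would split on $p$. For $p\geq 1$ the dilates $f_r(\theta)=f(re^{i\theta})$ form a bounded family in $L^p(\p\disk)$, so Banach--Alaoglu yields a weakly (weak-$*$ when $p=1$) convergent subsequence $f_{r_n}\to g$. Passing to the limit in the Poisson-type representation
\[
    f(\rho e^{i\theta})=\tfrac{1}{2\pi}\int_0^{2\pi}\tfrac{1-(\rho/r)^2}{1-2(\rho/r)\cos(\theta-t)+(\rho/r)^2}f(re^{it})\,dt,\quad \rho<r<1,
\]
identifies $f$ as the Poisson integral of $g$ (with absolute continuity in the $p=1$ case supplied by the F.~and~M.~Riesz theorem), and the classical Fatou theorem on nontangential convergence of Poisson integrals of $L^p$ data gives $f(re^{i\theta})\to g(e^{i\theta})$ nontangentially a.e.; I set $f_{nt}:=g$. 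For $0<p<1$, duality and weak compactness are unavailable, so I would instead invoke canonical factorization $f=Bh$ with $B$ a Blaschke product absorbing the zeros of $f$ and $h$ zero-free; then $h^{p/2}$ is a well-defined element of $H^2(\disk)$ to which the $p=2$ conclusion applies, and combining this with the unimodular radial-limit behavior of Blaschke products yields $f_{nt}$.

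Once $f_{nt}$ exists a.e., Fatou's lemma gives $\int|f_{nt}|^p\,d\theta\leq||f||_{H^p(\disk)}^p$, so $f_{nt}\in L^p(\p\disk)$. The reverse inequality $||f||_{H^p(\disk)}^p\leq\int|f_{nt}|^p\,d\theta$ follows by applying the Poisson representation to the factor $h^{p/2}$ and combining with the boundary values of $B$ (or directly via the Poisson integral of $f_{nt}$ in the $p\geq 1$ case), completing the second asserted identity. The $L^p$-norm convergence in the third assertion is then a standard consequence of a.e.\ convergence together with convergence of the $L^p$-(quasi)norms, for example via the Brezis--Lieb lemma when $p\geq 1$ and its analogue for the quasinorm when $0<p<1$.

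The main obstacle is the regime $0<p<1$: the failure of $L^p$ to be a Banach space makes the weak-compactness and duality arguments of the $p\geq 1$ case inapplicable, and one is forced to rely on the delicate canonical (inner/outer) factorization of $H^p$ functions. Its careful execution, though entirely standard in the sources cited, is the most technical ingredient of the proof.
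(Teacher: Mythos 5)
This statement is quoted in the paper as a classical background result (Theorem~\ref{bvcon}) with attribution to \cite{Duren, Koosis, Rep, RudinCR, BAF}; the paper supplies no proof of its own, so there is nothing internal to compare your argument against. Your outline is the standard proof from those references, correctly assembled: monotonicity of the means via subharmonicity of $|f|^p$, weak (or weak-$*$ plus F.~and~M.~Riesz) compactness and the Poisson representation for $p\geq 1$, Blaschke--Riesz factorization $f=Bh$ with $h^{p/2}\in H^2(\disk)$ for $0<p<1$, Fatou's lemma for one inequality and the Poisson majorization $|h(z)|^p\leq P[|h_{nt}|^p](z)$ for the other, and finally $L^p$ convergence from almost everywhere convergence together with convergence of the integrals (which for this purpose needs only the generalized dominated convergence theorem, not the full Brezis--Lieb lemma). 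I see no gap in the sketch.
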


        \subsubsection{Beltrami Equations}

        The Beltrami equation is a well studied first-order elliptic partial differential equation in the plane. There is particular interest in the connection between solutions of the Beltrami equation and quasiconformal mappings. See \cite{ellipquasi} for a thorough background on quasiconformal mappings. 
        
        For reference, we define the classic complex Beltrami equation.

        \begin{deff}
            Let $\mu \in L^\infty(\disk)$ where there exists a positive constant $c$ such that $||\mu||_{L^\infty(\disk)} \leq c < 1$. Any equation of the form 
            \[
                \frac{\p w}{\p z^*} = \mu \frac{\p w}{\p z}
            \]
            is called a complex Beltrami equation. 
        \end{deff}

        Next, we define Hardy classes of solutions to Beltrami equations. See \cite{Klim2016, Klim2014, Klim2016Pathlogical} for general consideration of these classes and \cite{quasiHardy, Zins, quasiHardy2, quasiHardy4, quasiHardy5} under the assumption that the functions satisfy the additional requirement of being quasiconformal (or quasiregular). 

        \begin{deff}
            For $0 < p < \infty$ and $\mu \in L^\infty(\disk)$ where there exists a positive constant $c$ such that $||\mu||_{L^\infty(\disk)} \leq c < 1$, we define the complex Beltrami-Hardy spaces $H^p_{Bel,\mu}(\disk)$ to be the collection of functions $w:\disk\to\mathbb{C}$ that solve
            \[
                \frac{\p w}{\p z^*} = \mu \frac{\p w}{\p z}
            \]
            and satisfy  
            \[
            \sup_{0 < r < 1} \int_0^{2\pi} |w(re^{i\theta})|^p\,d\theta  < \infty.
            \]
        \end{deff}

        The next theorem combines many known results from the complex setting that will be referenced throughout the paper. Specifically, the theorem includes the classic inclusion result for the holomorphic Hardy spaces into Bergman spaces of larger exponent follows for the Hardy classes of solutions to Beltrami equations, the theorem extends the boundary behavior of the holomorphic Hardy spaces from Theorem \ref{bvcon} to $H^p_{Bel,\mu}(\disk)$, as well as other classic results about the holomorphic Hardy spaces to $H^p_{Bel,\mu}(\disk)$. The theorem illustrates the significant influence that boundary values have on the function in the interior. 

 \begin{theorem}[Theorems 6.1.2, 6.1.3, 6.1.4, and 6.1.5 \cite{KlimBook}, Theorems 1, 2, 3, and 4 \cite{Klim2016}, Theorem 4.1, Corollary 4.4  \cite{quasiHardy}, \cite{Zins}]\label{thm: belhpinbetterlp}\label{thm: belbvcon}\label{thm: betterbvbetterleb}\label{thm: zeroposmesidentzero}
             For $0 < p < \infty$ and $\mu \in W^{1,s}(\disk)$, $s>2$, where there exists a positive constant $c$ such that $||\mu||_{L^\infty(\disk)} \leq c < 1$, every $w \in H^{p}_{Bel,\mu}(\disk)$ is an element of $L^m(\disk)$, for every $0 < m < 2p$, has a nontangential boundary values $w_{nt} \in L^p(\partial \disk)$ at almost every point of $\p \disk$,
        \[
            \lim_{r\nearrow 1} \int_0^{2\pi} |w(re^{i\theta})|^p \, d\theta = \int_0^{2\pi} |w_{nt}(e^{i\theta})|^p \,d\theta,
        \]
and
        \[
            \lim_{r\nearrow 1} \int_0^{2\pi} |w(re^{i\theta})- w_{nt}(e^{i\theta})|^p \, d\theta = 0.
        \]
Also, a function $w \in H^p_{Bel,\mu}(\disk)$ with nontangential boundary value $w_{nt} \in L^\nu(\p \disk)$, where $\nu > p$, is an element of $H^\nu_{Bel,\mu}(\disk)$, and if $w_{nt}$ vanishes on a set $E \subset \p \disk$ of positive measure, then $w$ is identically equal to zero.
        \end{theorem}

        \subsubsection{Conjugate Beltrami Equations}

        Next, we define the complex conjugate Beltrami equation, the associated Hardy classes of solutions, and follow this by including results from the literature that we generalize in Section \ref{section: conjbel}.

        \begin{deff}
            Let $\mu \in L^\infty(\disk)$ where there exists a positive constant $c$ such that $||\mu||_{L^\infty(\disk)} \leq c < 1$. Any equation of the form 
            \[
                \frac{\p w}{\p z^*} = \mu \frac{\p w^*}{\p  z^*}
            \]
            is called a complex conjugate Beltrami equation. 
        \end{deff}

        \begin{deff}
            For $0 < p < \infty$ and $\mu \in L^\infty(\disk)$ where there exists a positive constant $c$ such that $||\mu||_{L^\infty(\disk)} \leq c < 1$, we define the complex conjugate-Beltrami-Hardy spaces $H^p_{conj,\mu}(\disk)$ to be the collection of functions $w:\disk\to\mathbb{C}$ that solve
            \[
                \frac{\p w}{\p z^*} = \mu \frac{\p w^*}{\p  z^*}
            \]
            and satisfy  
            \[
            \sup_{0 < r < 1} \int_0^{2\pi} |w(re^{i\theta})|^p\,d\theta  < \infty.
            \]
        \end{deff}

         \begin{theorem}[Theorems 1, 2, 3, and 4 \cite{Klim2016}, Proposition 4.3.1 \cite{conjbel}]\label{thm: conjbelinclusioninlebesgue}\label{conjbvcon}\label{thm: conjzeroposmesidentzero}
             For $0 < p < \infty$ and $\mu \in W^{1,s}(\disk)$, $s>2$, where there exists a positive constant $c$ such that $||\mu||_{L^\infty(\disk)} \leq c < 1$, every $w \in H^{p}_{conj,\mu}(\disk)$ is an element of $L^m(\disk)$, for every $0 < m < 2p$, has nontangential boundary values $w_{nt} \in L^p(\partial \disk)$ at almost every point of $\p \disk$, 
        \[
            \lim_{r\nearrow 1} \int_0^{2\pi} |w(re^{i\theta})|^p \, d\theta = \int_0^{2\pi} |w_{nt}(e^{i\theta})|^p \,d\theta,
        \]
        and
        \[
            \lim_{r\nearrow 1} \int_0^{2\pi} |w(re^{i\theta})- w_{nt}(e^{i\theta})|^p \, d\theta = 0.
        \]
Also, a function $w \in H^p_{conj,\mu}(\disk)$ with nontangential boundary value $w_{nt} \in L^\nu(\p \disk)$, where $\nu > p$, is an element of $H^\nu_{conj,\mu}(\disk)$, and if $w_{nt}$ vanishes on a set $E \subset \p \disk$ of positive measure, then $w$ is identically equal to zero.
        \end{theorem}

        \subsubsection{Connection to Vekua Equations}\label{subsubsection: conjbelvekconnect}

        In \cite{conjbel}, a direct connection between solutions of certain complex conjugate Beltrami equations and solutions of a complex Vekua equation are described. See also \cite{BersNir} and \cite{Vek}. We include this connection in the complex setting for reference when we generalize it later.
        
        Let $\nu$ be a real-valued function in $W^{1,\infty}(\disk)$ such that there exists a constant $c$ which satisfies $||\nu||_{L^\infty(\disk)} \leq c < 1$. Define $\sigma: \disk\to\mathbb{R}$ and $\alpha : \disk \to \mathbb{C}$ by 
        \[
                \sigma := \frac{1-\nu}{1+\nu}
        \]
        and
        \[
                \alpha := -\frac{1}{1-\nu^2} \frac{\p \nu}{\p z^*}.
        \]
        Note, by their definition, $\sigma \in W^{1,\infty}(\disk)$ and $\alpha \in L^\infty(\disk)$. Then, by Proposition 3.2.3.1 of \cite{conjbel}, $f: \disk\to\mathbb{C}$ solves the complex conjugate Beltrami equation
        \[
            \frac{\p f}{\p z^*} = \nu \frac{\p f^*}{\p z^*}
        \]
        if and only if 
        the function $w: \disk\to\mathbb{C}$ defined by 
        \[
            w := \frac{f - \nu f^*}{\sqrt{1-\nu^2}}
        \]
        solves the complex Vekua equation
        \begin{equation}\label{eqn: conjbelvekeqn}
            \frac{\p w}{\p z^*} = \alpha w^*.
        \end{equation}
        Also, by Proposition 3.2.3.1 of \cite{conjbel}, $f \in H^p_{conj,\nu}(\disk)$ if and only if $w$ is an element of the complex Vekua-Hardy class $H^p_{0,\alpha}(\disk)$ defined to be the solutions of \eqref{eqn: conjbelvekeqn} that have finite $H^p$ norm. See Definition \ref{deff: bcvekhardy} for the definition of the bicomplex-analogue of the complex Vekua-Hardy classes of functions. Also, see \cite{KlimBook, WBD, PozHardy, CompOp, moreVekHardy} for more information about complex Vekua-Hardy classes.

        \subsubsection{General First Order Elliptic Equations}

        Finally, we define the general first-order elliptic equation, the associated Hardy classes of solutions, and follow this by including results from the literature that we generalize in Section \ref{section: GFOE}.

        \begin{deff}
            Let $A, B \in L^q(\disk)$, $q>2$, and $\mu_1, \mu_2 \in L^\infty(\disk)$ where there exists a positive constant $c$ such that $||\mu_1||_{L^\infty(\disk)} + ||\mu_2||_{L^\infty(\disk)}\leq c < 1$. Any equation of the form 
            \[
                \frac{\p w}{\p z^*} = \mu_1 \frac{\p w}{\p z} + \mu_2 \frac{\p w^*}{\p z^*} + Aw + Bw^*
            \]
            is called a complex general first-order elliptic equation. 
        \end{deff}

        \begin{deff}
            For $0 < p < \infty$ and $A, B \in L^q(\disk)$, $q>2$, and $\mu_1, \mu_2 \in L^\infty(\disk)$ where there exists a positive constant $c$ such that $||\mu_1||_{L^\infty(\disk)} + ||\mu_2||_{L^\infty(\disk)}\leq c < 1$, we define the complex GFOE-Hardy spaces $H^p_{\mu_1,\mu_2,A,B}(\disk)$ to be the collection of functions $w:\disk\to\mathbb{C}$ that solve
            \[
                \frac{\p w}{\p z^*} = \mu_1 \frac{\p w}{\p z} + \mu_2 \frac{\p w^*}{\p z^*} + Aw + B z^*
            \]
            and satisfy  
            \[
            \sup_{0 < r < 1} \int_0^{2\pi} |w(re^{i\theta})|^p\,d\theta  < \infty.
            \]
        \end{deff}

 \begin{theorem}[Theorems 1, 2, 3, and 4 \cite{Klim2016}]\label{thm: gfoehardyinbetterlebesgue}\label{gofebvcon}\label{thm: gfoebvinbetterlebimpliesbetterhp}\label{thm: gfoezerobdimpliesequivzero}
             For $0 < p < \infty$, $A,B \in L^s(\disk)$, and $\mu_1,\mu_2 \in W^{1,s}(\disk)$, $s>2$, where there exists a positive constant $c$ such that $||\mu_1||_{L^\infty(\disk)}  + ||\mu_2||_{L^\infty(\disk)}\leq c < 1$, every $w \in H^{p}_{\mu_1,\mu_2,A,B}(\disk)$ is an element of $L^m(\disk)$, for every $0 < m < 2p$, has a nontangential boundary value $w_{nt} \in L^p(\partial \disk)$ at almost every point of $\p \disk$, 
        \[
            \lim_{r\nearrow 1} \int_0^{2\pi} |w(re^{i\theta})|^p \, d\theta = \int_0^{2\pi} |w_{nt}(e^{i\theta})|^p \,d\theta,
        \]
        and
        \[
            \lim_{r\nearrow 1} \int_0^{2\pi} |w(re^{i\theta})- w_{nt}(e^{i\theta})|^p \, d\theta = 0.
        \]
	Every $w \in H^p_{\mu_1, \mu_2, A, B}(\disk)$ with nontangential boundary value $w_{nt} \in L^\nu(\p \disk)$, where $\nu > p$, is an element of $H^\nu_{ \mu_1, \mu_2, A, B}(\disk)$, and if $w_{nt}$ vanishes on a set $E \subset \p \disk$ of positive measure, then $w$ is identically equal to zero. 
        \end{theorem}

         \subsection{Bicomplex Numbers}

         The bicomplex numbers are a higher-dimensional extension of the complex numbers. See \cite{PriceMulti, BCHolo, ComplexSchr, CastaKrav, BCTransmutation, BCBergman, FundBicomplex} for an extensive background on the bicomplex numbers and where they appear in analysis. 

            \subsubsection{Basics of Bicomplex Numbers}

        We define the bicomplex numbers and recall many results we use from the literature for completeness.

    \begin{deff}
        The bicomplex numbers $\bc$ are the set of elements of $\C^2$ with the usual component-wise addition and multiplication defined by 
        \[
            (u_1, u_2)(v_1, v_2) = (u_1v_1 - u_2v_2, u_1v_2 + u_2 v_2).
        \]
        Elements of $\bc$ can be represented via the identifications
        \[
            (z,0) \in \mathbb{C}^2 \leftrightarrow z \in \mathbb{C}, \quad j = (0,1), \quad (z_1, z_2) \leftrightarrow z_1 + j z_2,
        \]
        with $j^2 = -1$. 
    \end{deff}

    \begin{deff}
        For $z = z_1 + jz_2 \in \bc$, we say $z_1$ is the scalar part of $z$, denoted by $\sca z$, and we say $z_2$ is the vector part of $z$, denoted by $\vect z$. We define the bicomplex conjugate of $z$ to be 
        \[
            \overline{z} = z_1 - j z_2.
        \]
        For $w = x + i y \in \C$, i.e., $x, y \in \R$, we define the bicomplexification of $w$ to be 
        \[
            \widehat{w} = x + jy.
        \]  
    \end{deff}

    The next proposition is a feature of the bicomplex-valued functions that is pivotal to our method for studying the function classes in the sections that follow. We refer to this result as the idempotent representation.

    \begin{prop}[Proposition 1 \cite{BCTransmutation}]\label{everybchasplusandminus}
    Let $w \in \bc$. There exist unique $w^{\pm} \in \C$ such that 
    \[
        w = p^+ w^+ + p^- w^-,
    \]
    where $p^\pm$ are given by
    \[
        p^\pm = \frac{1}{2}(1 \pm ji).
    \]
    Furthermore, 
    \[
    w^{\pm} = \sca w \mp i \vect w.
    \]
    \end{prop}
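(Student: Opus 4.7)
The plan is to verify the idempotent representation by direct computation using the algebraic properties of the idempotents $p^\pm$. First I would record the four identities
\[
p^+ + p^- = 1, \qquad (p^+)^2 = p^+, \qquad (p^-)^2 = p^-, \qquad p^+ p^- = 0,
\]
which follow immediately from the definition $p^\pm = \tfrac{1}{2}(1 \pm ji)$ together with $i^2 = j^2 = -1$ and the commutativity of the bicomplex product (so that $(ji)^2 = j^2 i^2 = 1$). These identities are all that is needed to handle products involving $p^\pm$.

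For existence, given $w \in \bc$ I would use its unique decomposition $w = \sca w + j \vect w$ with $\sca w, \vect w \in \C$ and substitute $w^\pm = \sca w \mp i \vect w$ into $p^+ w^+ + p^- w^-$. Expanding and using $ji \cdot i = -j$, the cross terms proportional to $ji \sca w$ cancel between the two summands while the terms $-ji \cdot i \vect w = j \vect w$ reinforce, producing $\sca w + j \vect w = w$. This is the one nontrivial calculation in the proposition.

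For uniqueness, I would suppose that $w = p^+ a + p^- b$ for some $a,b \in \C$ and solve for $a,b$ in terms of $\sca w$ and $\vect w$. Expanding gives
\[
p^+ a + p^- b = \frac{a+b}{2} + j\,\frac{i(a-b)}{2},
\]
and comparing with $w = \sca w + j \vect w$ using the uniqueness of the $\C \oplus j\C$ decomposition yields the linear system $a+b = 2\sca w$ and $i(a-b) = 2\vect w$. Solving forces $a = \sca w - i\vect w = w^+$ and $b = \sca w + i \vect w = w^-$, establishing both uniqueness and the stated formulas for $w^\pm$ simultaneously.

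I do not anticipate any serious obstacle; the result is a routine verification. The only subtlety worth flagging is that $p^\pm$ are zero divisors in $\bc$, so one cannot prove uniqueness by simply multiplying the identity $p^+(a-w^+) + p^-(b-w^-) = 0$ by $p^+$ and dividing. This is why I would route the uniqueness argument through the $\C$-basis $\{1, j\}$ rather than through the idempotents themselves.
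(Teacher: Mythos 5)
Your proof is correct and complete. Note that the paper itself does not prove this proposition --- it is imported verbatim as Proposition 1 of the cited reference \cite{BCTransmutation} --- so there is no in-paper argument to compare against; your direct verification (existence by expanding $p^+w^+ + p^-w^-$ in the basis $\{1,j\}$, uniqueness by solving the resulting linear system $a+b = 2\,\sca w$, $i(a-b) = 2\,\vect w$) is exactly the standard computation, and your caution about $p^\pm$ being zero divisors, hence routing uniqueness through the $\C\oplus j\C$ decomposition rather than through division by idempotents, is well placed.
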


    \begin{comm}
        The bicomplex numbers $p^\pm$ satisfy the relationships
        \[
            (p^\pm)^2 = p^\pm, \quad p^+ + p^- = 1, \text{ and } \quad p^+p^- = 0.
        \]
    \end{comm}

\begin{deff}
    For $w \in \bc$, we define the bicomplex norm of $w$, denoted by $||\cdot||_{\bc}$,  as
    \[
        ||w||_{\bc} := \sqrt{\frac{|w^+|^2 + |w^-|^2}{2}},
    \]
    where $|w^{\pm}|$ is the complex modulus of $w^\pm$. 
\end{deff}

\begin{comm}
It is immediate from the definition of the bicomplex norm $||\cdot||_{\bc}$ that, for every $w = p^+ w^+ + p^- w^- \in \bc$,   \begin{equation}\label{bcbasicestimates}
        \frac{1}{\sqrt{2}} |w^\pm| \leq ||w||_{\bc} \leq \frac{1}{\sqrt{2}}\left( |w^+| + |w^-|\right), 
    \end{equation}
Also, for $w,v \in \bc$, we have
    \begin{equation}\label{stareqn}
        ||wv||_\bc \leq \sqrt{2} \, ||w||_\bc \, ||v||_\bc.
    \end{equation}
\end{comm}

\begin{deff}
    For a positive real number $p$, we define $L^p(\disk,\bc)$ to be the collection of functions $f: \disk\to\bc$ such that
    \[
        ||f||_{L^p(\disk,\bc)} := \left( \iint_{\disk}||f(z)||^p_\bc\,dx\,dy\right)^{1/p} < \infty.
    \]  
    We define $L^\infty(\disk,\bc)$ to be the collection of functions $f: \disk\to\bc$ such that 
    \[
        ||f||_{L^\infty(\disk,\bc)} := \sup_{z \in \disk} ||f(z)||_{\bc} < \infty.
    \]
    For a nonnegative integer $k$ and $0 < p \leq \infty$, we define $W^{k,p}(\disk,\bc)$ to be the collection of functions $f: \disk\to\bc$ such that $f$ and its derivatives up to order $k$ are in $L^p(\disk,\bc)$. The classes of functions $L^p(\p\disk,\bc)$ and $W^{k,p}(\p\disk,\bc)$ are defined analogously for functions on $\p\disk$. 
\end{deff}

\begin{prop}[Proposition 2.24 \cite{BCAtomic}]\label{prop: pminlpfuncinlp} 
    For $0 < p \leq \infty$, $w = p^+ w^+ + p^- w^- \in L^p(\disk,\bc)$ if and only if $w^\pm \in L^p(\disk)$. The same result holds for $L^p(\p \disk,\bc)$. 
\end{prop}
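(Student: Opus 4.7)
The plan is to derive this directly from the pointwise estimates in \eqref{bcbasicestimates}, which give the two-sided comparison
\[
    \tfrac{1}{\sqrt{2}}\,|w^\pm(z)| \;\leq\; \|w(z)\|_{\bc} \;\leq\; \tfrac{1}{\sqrt{2}}\bigl(|w^+(z)| + |w^-(z)|\bigr)
\]
for each $z \in \disk$. Both implications of the equivalence will follow by raising these pointwise inequalities to the $p$-th power and integrating over $\disk$.

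For $0 < p < \infty$, suppose first that $w \in L^p(\disk,\bc)$. The left inequality in \eqref{bcbasicestimates} gives $|w^\pm(z)|^p \leq 2^{p/2}\,\|w(z)\|_{\bc}^p$ pointwise, and integrating yields $w^\pm \in L^p(\disk)$ together with the quantitative bound $\|w^\pm\|_{L^p(\disk)} \leq \sqrt{2}\,\|w\|_{L^p(\disk,\bc)}$. Conversely, assume $w^\pm \in L^p(\disk)$. From the right inequality in \eqref{bcbasicestimates} together with the elementary estimate $(a+b)^p \leq C_p(a^p + b^p)$ (valid with $C_p = 2^{p-1}$ when $p \geq 1$ and with $C_p = 1$ when $0 < p < 1$), we obtain
\[
    \|w(z)\|_{\bc}^p \;\leq\; \frac{C_p}{2^{p/2}}\bigl(|w^+(z)|^p + |w^-(z)|^p\bigr),
\]
and integration over $\disk$ gives $w \in L^p(\disk,\bc)$.

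For $p = \infty$, the same pointwise estimates give the chain of essential supremum bounds
\[
    \tfrac{1}{\sqrt{2}}\,\|w^\pm\|_{L^\infty(\disk)} \;\leq\; \|w\|_{L^\infty(\disk,\bc)} \;\leq\; \tfrac{1}{\sqrt{2}}\bigl(\|w^+\|_{L^\infty(\disk)} + \|w^-\|_{L^\infty(\disk)}\bigr),
\]
from which the equivalence is immediate. The statement for $L^p(\p\disk,\bc)$ and $L^p(\p\disk)$ is proved by exactly the same argument with $\disk$ replaced by $\p\disk$ and the area integral replaced by the arc-length integral on $\p\disk$; no property of $\disk$ beyond being a measure space is used. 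There is no genuine obstacle here — the content of the proposition is packaged entirely in the bicomplex-norm inequalities \eqref{bcbasicestimates}, and the only mildly delicate point is remembering to split the constant $C_p$ according to whether $p \geq 1$ or $0 < p < 1$.
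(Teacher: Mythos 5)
Your proof is correct. Note that the paper does not actually prove this proposition --- it is imported verbatim as Proposition 2.24 of \cite{BCAtomic} --- but your argument is the standard one and is exactly the mechanism the paper itself deploys repeatedly (e.g., in the proof of Theorem \ref{thm: bcbelhardyrep}): the two-sided pointwise comparison \eqref{bcbasicestimates} between $\|w(z)\|_{\bc}$ and $|w^{\pm}(z)|$, raised to the $p$-th power and integrated, with the constant $C_p$ split according to whether $p\geq 1$ or $0<p<1$. The only cosmetic remark is that the paper defines $L^\infty(\disk,\bc)$ via a supremum rather than an essential supremum, but the chain of inequalities you write holds verbatim in either convention, so nothing changes.
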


\begin{deff}\label{bcdbardef}
    We define the bicomplex differential operators $\p$ and $\dbar$ as 
    \[
        \p := \frac{1}{2} \left( \frac{\p}{\p x} - j \frac{\p }{\p y}\right)
    \]
    and
    \[
        \dbar := \frac{1}{2} \left( \frac{\p}{\p x} + j \frac{\p }{\p y}\right).
    \]
\end{deff}

  \begin{comm}\label{remark: idempotentdiffop} Observe, by the definition of $\p$ and $\dbar$, the differential operators also have an idempotent representation
    \[
        \p:= p^+ \frac{\p}{\p z^*} + p^- \frac{\p }{\p z}
    \]
    and 
    \[
        \dbar := p^+ \frac{\p}{\p z} + p^- \frac{\p }{\p z^*}.
    \]
    \end{comm}

    \subsubsection{Bicomplex Holomorphic Functions}

        With the differential operators from Definition \ref{bcdbardef}, we define the bicomplex analogue of holomorphicity for a $\bc$-valued function defined on $\disk$.

        \begin{deff}
            We define the $\bc$-holomorphic functions, denoted by $Hol(\disk,\bc)$, to be the collection of functions $w: \disk \to\bc$ such that
            \begin{equation}\label{eqn: bccreqn}
                \dbar w = 0.
            \end{equation}
        \end{deff}

        \begin{comm}
            Note that this differs from the bicomplex holomorphicity considered in \cite{BCHolo} for a $\bc$-valued function of a $\bc$-variable. In \cite{BCHolo}, functions of a bicomplex variable that are differentiable have idempotent representation where the component functions are both complex holomorphic functions of a single complex (not the same) variable. It is immediate that the $p^-$ component function of a $\bc$-valued function of a single complex variable that satisfies \eqref{eqn: bccreqn} is holomorphic but the $p^+$ component function is antiholomorphic. See also Remark 2.3 of \cite{BCAtomic} (or \cite{FundBicomplex, BCTransmutation, BCBergman}). Solutions of \eqref{eqn: bccreqn} have previously been considered in the context of Hardy space theory in \cite{BCAtomic} and the Schwarz boundary value problem in \cite{BCSchwarz}.  
        \end{comm}

    \subsubsection{Bicomplex Vekua Equations}

        Using the bicomplex $\dbar$ from Definition \ref{bcdbardef}, we also define a Vekua-type equation. 

        \begin{deff}
            Let $A, B \in L^q(\disk,\bc)$, $q>2$. We say that any equation of the form 
            \begin{equation}\label{eqn: bcvekeqn}
                \dbar w = Aw + B\overline{w}
            \end{equation}
            is a $\bc$-Vekua equation. 
        \end{deff}

        The $\bc$-Vekua equations were previously considered as a way to study the complex stationary Schr\"odinger equation in \cite{CastaKrav, BCTransmutation, FundBicomplex, KravAPFT, ComplexSchr}, as well as in the context of Bergman and Hardy space theory in \cite{BCTransmutation, BCBergman, BCHoiv}.

        \begin{comm}
            In the same way as solutions of \eqref{eqn: bccreqn} are a linear combination (with respect to the idempotent elements $p^\pm$) of a holomorphic function and an antiholomorphic function, a solution of \eqref{eqn: bcvekeqn} is a linear combination of a solution of a complex Vekua equation and the complex conjugate of a complex Vekua equation. We include this result for reference below.
        \end{comm}

        \begin{theorem}[Theorem 4.9 \cite{BCHoiv}]\label{bcvekimpliescvek}
            Let $A, B \in L^q(\disk,\bc)$, $q>2$. A function $w:\disk\to\bc$ solves
\[
    \dbar w = Aw + B\overline{w}
\]
if and only if
\[
    \frac{\p (w^+)^*}{\p z^*} = (A^+)^* (w^+)^* + (B^+)^* w^+
\]
and
\[
    \frac{\p w^-}{\p z^*} = A^- w^- + B^- (w^-)^*.
\]
        \end{theorem}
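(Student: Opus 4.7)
The plan is to substitute the idempotent representations of every bicomplex quantity appearing in $\dbar w = Aw + B\overline{w}$ and then match coefficients of $p^+$ and $p^-$. The orthogonality relations $p^+ p^- = 0$ and $(p^\pm)^2 = p^\pm$, together with the uniqueness of the decomposition in Proposition \ref{everybchasplusandminus}, do the heavy lifting.

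The key preliminary computation is the idempotent decomposition of $\overline{w}$. Using $w^\pm = \sca w \mp i \vect w$ from Proposition \ref{everybchasplusandminus} together with the definition of the bicomplex conjugate, a short calculation shows $(\overline{w})^+ = (w^+)^*$ and $(\overline{w})^- = (w^-)^*$, hence $\overline{w} = p^+ (w^+)^* + p^- (w^-)^*$. This is where the complex conjugations in the final equations originate.

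With this in hand, both sides of the equation expand cleanly. The left-hand side, using the representation of $\dbar$ in Remark \ref{remark: idempotentdiffop} together with the orthogonality relations, collapses to $\dbar w = p^+ \frac{\p w^+}{\p z} + p^- \frac{\p w^-}{\p z^*}$. The right-hand side, after multiplying out $(p^+ A^+ + p^- A^-)(p^+ w^+ + p^- w^-)$ and $(p^+ B^+ + p^- B^-)(p^+ (w^+)^* + p^- (w^-)^*)$ with the same relations, reduces to $p^+(A^+ w^+ + B^+(w^+)^*) + p^-(A^- w^- + B^-(w^-)^*)$.

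By the uniqueness of the idempotent decomposition, matching the $p^\pm$ components converts the single bicomplex equation into the pair of complex equations $\frac{\p w^+}{\p z} = A^+ w^+ + B^+(w^+)^*$ and $\frac{\p w^-}{\p z^*} = A^- w^- + B^- (w^-)^*$. Complex-conjugating the first, using that $\left(\frac{\p f}{\p z}\right)^* = \frac{\p f^*}{\p z^*}$, yields the stated first equation of the theorem; the second is already in the stated form. Every step is reversible, so both directions of the biconditional follow at once. The only nontrivial ingredient is the idempotent decomposition of $\overline{w}$; everything else is routine idempotent algebra.
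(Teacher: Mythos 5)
The paper does not actually prove this statement; it imports it verbatim from Theorem 4.9 of \cite{BCHoiv}, and the closest in-paper analogues are Propositions \ref{prop: bcbelimpliescomplexbel} and \ref{prop: bcconjbelimpliesconjbbel}, which use exactly the strategy you describe: expand everything in the idempotent basis, use $(p^\pm)^2 = p^\pm$ and $p^+p^-=0$, and invoke uniqueness of the representation. Your computation of $\dbar w$, the expansion of the products $Aw$ and $B\overline{w}$, the matching of $p^\pm$ components, and the final complex conjugation of the $p^+$ equation are all correct and fully reversible, so the biconditional does follow once the decomposition of $\overline{w}$ is in hand.

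The one step that is not routine --- and that you should not dismiss as ``a short calculation'' --- is precisely that decomposition. With the conjugate as literally defined in this paper, $\overline{w} = \sca w - j \vect w$, one computes from $w^\pm = \sca w \mp i \vect w$ that $(\overline{w})^+ = \sca w + i \vect w = w^-$ and $(\overline{w})^- = w^+$: the $j$-conjugation \emph{swaps} the idempotent components and introduces no complex conjugation at all. Feeding that into your argument would yield the coupled system $\frac{\p w^+}{\p z} = A^+ w^+ + B^+ w^-$ and $\frac{\p w^-}{\p z^*} = A^- w^- + B^- w^+$, which is not the statement of the theorem. The identity you actually need, $\overline{w} = p^+ (w^+)^* + p^- (w^-)^*$, holds for the conjugation $\sca w + j \vect w \mapsto (\sca w)^* - j (\vect w)^*$ (conjugate both complex components \emph{and} negate $j$), and this is evidently the conjugation intended in the Vekua equation here --- it is the one used without comment in the displayed computations of Proposition \ref{prop: bcconjbelimpliesconjbbel} and its GFOE analogue. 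So your proof is correct for the intended conjugation, but you should state explicitly which conjugation you are using and write out the two-line verification, since with the definition as printed the key identity fails and the whole argument turns on it.
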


    \subsubsection{Bicomplex Hardy Spaces}

        Next, we state definitions for certain Hardy classes of bicomplex-valued functions on $\disk$. These classes of functions were previously considered in \cite{BCAtomic, BCHoiv, BCHarmVek} and motivate the Hardy classes of bicomplex-valued functions that we examine in Sections \ref{section: beleqn}, \ref{section: conjbel}, and \ref{section: GFOE}.

        \begin{deff}\label{deff: bcholohardy}
            For $0 < p < \infty$, we define the $\bc$-holomorphic Hardy spaces $H^p(\disk,\bc)$ to be the collection of $w \in Hol(\disk,\bc)$ such that 
            \[
                \sup_{0 < r < 1} \int_0^{2\pi} ||w(re^{i\theta})||_{\bc}^p \,d\theta < \infty.
            \]
        \end{deff}

        \begin{deff}\label{deff: bcvekhardy}
            For $0 < p < \infty$ and $A, B \in L^q(\disk,\bc)$, $q>2$, we define the $\bc$-Vekua Hardy classes $H^p_{A,B}(\disk,\bc)$ to be the collection of $w: \disk \to\bc$ that solve
            \[
                \dbar w = Aw + B\overline{w}
            \]
            and satisfy
            \[
                \sup_{0 < r < 1} \int_0^{2\pi} ||w(re^{i\theta})||_{\bc}^p \,d\theta < \infty.
            \]
        \end{deff}

     \subsection{The Schwarz and Dirichlet Boundary Value Problems}

    In Section \ref{section: bvp}, we consider boundary value problems associated with the Beltrami equation that we define in Section \ref{section: beleqn}. Here we record results from the literature that we generalize in Section \ref{section: beleqn} and appeal to in our justifications of the results we present. 

    The classic Schwarz boundary value problem is to find a holomorphic function with prescribed real part on the boundary, i.e., the boundary value problem 
    \[
        \begin{cases}
            \frac{\p w}{\p z^*} = 0, & \text{ in } \disk,\\
            \re\{w\}|_{\p \disk} = g,
        \end{cases}
    \]
    for a prescribed function on the circle $g$. Since imaginary parts of holomorphic functions are only well defined up to addition of constants, the problem is made well defined by requiring the imaginary part has a prescribed pointwise value, i.e., the boundary value problem 
    \[
        \begin{cases}
            \frac{\p w}{\p z^*} = 0, & \text{ in } \disk,\\
            \re\{w\}|_{\p \disk} = g,\\
            \im\{w(0)\} = c,
        \end{cases}
    \]
    for a prescribed function on the circle $g$ and $c \in \R$. See \cite{BegBook,Beg} for background on this problem. 
    
    In \cite{BeltramiSchwarz}, the Schwarz boundary value problem was considered where a Beltrami equation replaces the Cauchy-Riemann equation. Explicit conditions for solvability and formulas for producing solutions are provided by the theorem below.

    \begin{theorem}[Theorem 3.1 \cite{BeltramiSchwarz}]\label{thm: beltramischwarz}
        The Schwarz problem 
        \[
            \begin{cases}
                \frac{\p w}{\p z^*} = \mu \frac{\p w}{\p z} + f, & \text{ in } \disk,\\
                \re\{w\}|_{\p\disk} = \gamma, \\
                \im\{w(0)\} = a,
            \end{cases}
        \]
        for $\mu \in \mathbb{C}$ such that $|\mu| \leq c < 1$, for some constant $c$, $f \in L^p(\disk)$, $p >2$, $\gamma \in C(\p \disk,\R)$, and $a \in \R$, is uniquely solvable and the solution is 
        \begin{align*}
            w(z) 
            &= \varphi(z) 
            + \sum_{k=0}^\infty (-1)^{k+1} \frac{1}{2\pi} \iint_{|\zeta|<1} \left( (-\mu)^k T^k(f+\mu\varphi')(\zeta)\frac{\zeta + z}{\zeta(\zeta -z)} \right.\\
            &\quad\quad\quad\quad +  \left. ((-\mu)^*)^k (T^k(f+\mu\varphi')(\zeta))^*\frac{1-z \zeta^*}{\zeta^*(1-z\zeta^*)} \right)\,d\xi\,d\eta,
        \end{align*}
        where $\zeta = \xi + i \eta$,
        \[
            \varphi(z) = \frac{1}{2\pi i} \int_{|\zeta|=1} \gamma(\zeta) \frac{\zeta + z}{\zeta -z} \frac{d\zeta}{\zeta} + ia,
        \]
        and $T(\cdot)$ is the operator defined by
        \[
            T(g)(z) := -\frac{1}{\pi} \iint_{|\zeta|<1} \frac{g(\zeta)}{(\zeta - z)^2} + \frac{(g(\zeta))^*}{(1-z\zeta^*)^2}\,d\xi\,d\eta.
        \]
    \end{theorem}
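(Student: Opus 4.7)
The plan is to decouple the boundary conditions from the Beltrami operator and then invert $\frac{\p}{\p z^*} - \mu\frac{\p}{\p z}$ by a Neumann series in $\mu$. First, set $w = \varphi + u$, where $\varphi$ is the holomorphic Schwarz integral displayed in the theorem; by the classical Schwarz formula, $\varphi$ is holomorphic in $\disk$, continuous on $\overline{\disk}$, and satisfies $\re\varphi|_{\p\disk} = \gamma$ and $\im\varphi(0) = a$. Since $\frac{\p\varphi}{\p z^*} = 0$ and $\frac{\p\varphi}{\p z} = \varphi'$, the problem for $w$ is equivalent to the problem for $u$:
\[
\frac{\p u}{\p z^*} = \mu\frac{\p u}{\p z} + g_0, \qquad \re u|_{\p\disk} = 0, \qquad \im u(0) = 0,
\]
with source $g_0 := f + \mu\varphi' \in L^p(\disk)$.

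Next, introduce the Schwarz--Pompeiu solution operator $S$ on the disk that inverts $\frac{\p}{\p z^*}$ subject to homogeneous Schwarz data; its kernel is exactly the one appearing in the summand of the given solution formula. A direct differentiation under the integral sign (using $\frac{\p}{\p z}\frac{\zeta+z}{\zeta(\zeta-z)} = \frac{2}{(\zeta-z)^2}$ and its reflected counterpart) shows that $\frac{\p S}{\p z} = T$. The reduced problem is therefore equivalent to the integral equation $u = S\bigl(g_0 + \mu\frac{\p u}{\p z}\bigr)$, and applying $\frac{\p}{\p z}$ to both sides yields $(I - \mu T)\frac{\p u}{\p z} = T(g_0)$.

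The operator $T$ is a Calder\'on--Zygmund singular integral of Beurling type on the disk, bounded on every $L^p(\disk)$ for $1 < p < \infty$, and by continuity of the operator norm in $p$ (Riesz--Thorin interpolation together with Astala-type bounds for the Beurling transform) one may fix the exponent $p > 2$ so that $|\mu|\cdot\|T\|_{L^p\to L^p}\le c'<1$. The Neumann series
\[
\frac{\p u}{\p z} = \sum_{k=0}^\infty \mu^k T^{k+1}(g_0)
\]
then converges in $L^p(\disk)$, and substituting back produces $u = \sum_{k=0}^\infty \mu^k S(T^k(g_0))$. Unwinding the kernel of $S$ and collecting the resulting signs (the combination $(-1)^{k+1}(-\mu)^k = -\mu^k$ arises naturally) reproduces the series in the theorem statement. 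The boundary conditions pass to the limit because each partial sum satisfies them by construction of $S$ and because the Sobolev embedding $W^{1,p}(\disk)\hookrightarrow C(\overline{\disk})$ for $p > 2$ secures the continuous boundary trace.

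For uniqueness, the difference $v$ of two solutions satisfies $\frac{\p v}{\p z^*} = \mu\frac{\p v}{\p z}$ with trivial Schwarz data, so the same Neumann argument forces $\frac{\p v}{\p z} = 0$; then $v$ is holomorphic with vanishing Schwarz data, which by the classical Schwarz formula implies $v\equiv 0$. The main technical obstacle is verifying the strict bound $|\mu|\cdot\|T\|_{L^p\to L^p} < 1$; this is precisely where the hypothesis $|\mu|\le c<1$ is used in an essential way, and where a careful choice of $p > 2$, together with sharp mapping properties of the Beurling-type transform on the disk, becomes necessary.
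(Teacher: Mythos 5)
The paper does not actually prove Theorem \ref{thm: beltramischwarz}: it is quoted verbatim as Theorem 3.1 of \cite{BeltramiSchwarz} and used as a black box in Section \ref{section: bvp}, so there is no internal proof to compare against. Your outline --- subtract the Schwarz integral $\varphi$, pass to the homogeneous problem for $u$ with source $g_0=f+\mu\varphi'$, represent $u$ through the Schwarz--Pompeiu operator $S$ whose $z$-derivative is $T$, and close the argument with a Neumann series using $\|T\|_{L^2\to L^2}=1$ plus interpolation to force $|\mu|\,\|T\|_{L^{p'}\to L^{p'}}<1$ for some $p'>2$ near $2$ --- is the standard route and is, in substance, how the cited result is obtained.

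There is, however, one genuine error in the execution. The operators $T$ and $S$ are only $\R$-linear, not $\C$-linear: each contains a term built from $(g(\zeta))^*$. Writing $T=T_1+T_2C$ with $T_1,T_2$ complex-linear and $C$ the conjugation, one has $T(\mu v)=\mu T_1v+\mu^*T_2(v^*)\neq\mu T(v)$ for non-real $\mu$, and likewise for $S$. Your passage from $u=S(g_0+\mu\,\p u/\p z)$ to ``$(I-\mu T)\p u/\p z=T(g_0)$'' and then to $\p u/\p z=\sum_k\mu^kT^{k+1}(g_0)$ silently commutes the scalar $\mu$ past $T$, and the subsequent identity $u=\sum_k\mu^kS(T^kg_0)$ commutes it past $S$. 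The object that must be iterated is the $\R$-linear composite $h\mapsto\mu T(h)$, whose $k$-th power is not $\mu^kT^k$; indeed the series $h=\sum_k\mu^kT^kg_0$ fails to satisfy $h=\mu T(h)+g_0$ unless $\mu\in\R$, and tracking the conjugations through the iteration is precisely why the quoted formula carries $((-\mu)^*)^k$ on the conjugated term. (The transcribed formula is itself ambiguous on this point, but the underlying mathematics is not.) Two smaller issues: the crucial bound $\|T\|_{L^2\to L^2}\le1$ is asserted via ``Astala-type bounds,'' whereas it is really the classical isometry coming from Green's theorem applied to $u=S(g)$ with $\re u|_{\p\disk}=0$, namely $\iint_\disk\bigl(|\p_zu|^2-|\p_{z^*}u|^2\bigr)\,dA=\tfrac{1}{2i}\oint_{\p\disk}\bar u\,du=0$, followed by Riesz--Thorin; and the membership $\varphi'\in L^p(\disk)$ is not automatic for merely continuous $\gamma$ (the derivative of the Schwarz integral of a continuous function need not even lie in $L^2(\disk)$), so the meaning of $T^k(f+\mu\varphi')$ requires justification, although that difficulty is inherited from the statement itself.
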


    In \cite{WBD}, the author and B. B. Delgado showed existence of a unique solution for the Schwarz boundary value problem on the unit disk when the boundary condition is a distributional boundary value. See the definition and theorem below. 

    \begin{deff}\label{bvcircle} Let $f$ be a function defined on the complex unit disk. We say that $f$ has a boundary value in the sense of distributions $f_b$, also called a distributional boundary value, if, for every $\gamma \in C^\infty(\p \disk)$, the limit
    \[
        \lim_{r \nearrow 1} \int_0^{2\pi} f(re^{i\theta}) \gamma(\theta) \,d\theta 
    \]
    exists. 
\end{deff}

    \begin{theorem}[Theorem 6.9 \cite{WBD}]\label{nonhomogsbvp}
        The Schwarz boundary value problem
        \[\begin{cases}
            \frac{\p w}{\p z^*} = f, &\text{ in } \disk,\\
            \re\{w_b\} = g,\\
            \im\{w(0)\} = c,
        \end{cases}\]
        for $f \in L^1(\disk)$, $g\in \mathcal{D}'(\p \disk)$, and $c\in \mathbb{R}$, is uniquely solved by 
        \[
            w = \frac{1}{2\pi}\langle g, P_r(\theta - \cdot) + iQ_r(\theta - \cdot)\rangle + ic  - \frac{1}{2\pi}\iint_{|\zeta|<1} \left( \frac{f(\zeta)}{\zeta}\,\frac{\zeta + z}{\zeta - z} + \frac{(f(\zeta))^*}{\zeta^*}\,\frac{1+z\zeta^*}{1-z\zeta^*}\right)\,d\xi\,d\eta,
        \]
        where $z = re^{i\theta}$, $\zeta = \xi + i \eta$, and
        \[
            P_r(\theta) = \frac{1-r^2}{1-2r\cos(\theta) + r^2}
        \]
        and 
        \[
            Q_r(\theta) =  \frac{2r\sin(\theta)}{1-2r\cos(\theta) + r^2}
        \]
        are the Poisson kernel and the conjugate Poisson kernel on $\disk$, respectively. 
        \end{theorem}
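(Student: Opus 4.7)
The plan is to verify directly that the displayed function $w$ satisfies each of the three prescriptions of the problem, and then to obtain uniqueness from standard rigidity for holomorphic functions whose real boundary distribution vanishes. I would split $w$ into two pieces: the distributional pairing $H(z):=\frac{1}{2\pi}\langle g,P_r(\theta-\cdot)+iQ_r(\theta-\cdot)\rangle+ic$ and the Pompeiu-type area integral $Tf(z)$ built from $f$. The term $H$ is holomorphic in $\disk$ because, for each fixed $\varphi\in[0,2\pi)$, the Schwarz kernel $\frac{e^{i\varphi}+z}{e^{i\varphi}-z}$ is holomorphic in $z$. In $Tf$ the second summand $\frac{(f(\zeta))^*}{\zeta^*}\frac{1+z\zeta^*}{1-z\zeta^*}$ is holomorphic in $z$ on $\disk$ because $|z\zeta^*|<1$, so only the first summand contributes to $\frac{\p (Tf)}{\p z^*}$. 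Writing $\frac{\zeta+z}{\zeta-z}=-1+\frac{2\zeta}{\zeta-z}$ and invoking the fundamental-solution identity $\frac{\p}{\p z^*}\!\left(\frac{1}{\zeta-z}\right)=-\pi\delta_\zeta$ in the sense of distributions in $z$ gives $\frac{\p w}{\p z^*}=f$.

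Next I would check the real-part boundary condition. On $|z|=1$ we have $z^*=1/z$, and a short algebraic computation shows $\left(\frac{1+z\zeta^*}{1-z\zeta^*}\right)^{\!*}=-\frac{\zeta+z}{\zeta-z}$; combined with $(f(\zeta)/\zeta)^*=f(\zeta)^*/\zeta^*$, this means the two summands of the integrand of $Tf$ are, on the boundary, negatives of each other's complex conjugates. Hence $\re\{(Tf)_b\}\equiv 0$ on $\p\disk$, and the real boundary trace of $w$ comes entirely from $\re\{H\}$. Since $\re\!\left\{\frac{\zeta+z}{\zeta-z}\right\}=P_r(\theta-\varphi)$ for $z=re^{i\theta}$ and $\zeta=e^{i\varphi}$, $\re\{H\}$ is the Poisson integral of $g$, whose distributional boundary value on $\p\disk$ is $g$ in the sense of Definition \ref{bvcircle}. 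For the pointwise condition, $P_0\equiv 1$ and $Q_0\equiv 0$ give $H(0)=\frac{1}{2\pi}\langle g,1\rangle+ic$; and at $z=0$ the integrand of $Tf$ reduces to $\frac{f(\zeta)}{\zeta}+\frac{(f(\zeta))^*}{\zeta^*}=2\re\{f(\zeta)/\zeta\}$, so $Tf(0)\in\R$ and $\im\{w(0)\}=c$.

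For uniqueness, the difference $u$ of two solutions is holomorphic in $\disk$ with $\re\{u_b\}=0$ as a distribution on $\p\disk$ and $\im\{u(0)\}=0$. Then $\re\{u\}$ is harmonic with vanishing distributional boundary data, hence identically zero by the distributional Dirichlet uniqueness, so $u$ is a purely imaginary constant; the normalization at the origin forces $u\equiv 0$.

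The main obstacle will be bookkeeping the distributional pairings carefully: in particular, justifying passage of $\frac{\p}{\p z^*}$ under the area integral to apply the fundamental-solution identity (which uses only $f\in L^1(\disk)$ and local integrability of the Cauchy kernel in each variable), and making precise the claim that the Poisson integral of $g\in\mathcal{D}'(\p\disk)$ recovers $g$ as a boundary value in the sense of Definition \ref{bvcircle}. The reflected second summand in $Tf$ is doing double duty here and must be tracked on both sides: it produces the vanishing-real-part trace on $\p\disk$ while contributing nothing to the $\dbar$ computation in $\disk$, and this is exactly the feature that lets the candidate formula hit all three conditions at once.
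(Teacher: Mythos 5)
The paper does not actually prove this statement: it is imported verbatim as Theorem 6.9 of \cite{WBD} and used as a black box in Section \ref{section: bvp}, so there is no internal proof to compare your argument against. Judged on its own merits, your outline is the natural direct verification and is essentially sound: the Schwarz-kernel pairing $\frac{1}{2\pi}\langle g, P_r+iQ_r\rangle$ is holomorphic because $P_r(\theta-\varphi)+iQ_r(\theta-\varphi)=\frac{e^{i\varphi}+z}{e^{i\varphi}-z}$; the reflected summand of the area term is holomorphic in $z$ on $\disk$, so $\frac{\p}{\p z^*}$ sees only the Cauchy-type kernel and the identity $\frac{\p}{\p z^*}(\zeta-z)^{-1}=-\pi\delta_\zeta$ gives $\frac{\p w}{\p z^*}=f$; the boundary identity $\left(\frac{1+z\zeta^*}{1-z\zeta^*}\right)^{*}=-\frac{\zeta+z}{\zeta-z}$ on $|z|=1$ is correct and does kill the real trace of the area term; and your uniqueness reduction to a holomorphic function with vanishing distributional real boundary value and normalized imaginary part at the origin is the standard one. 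Two points need more care than your sketch gives them. First, the splitting $\frac{\zeta+z}{\zeta-z}=-1+\frac{2\zeta}{\zeta-z}$ separates the integrand into pieces that need not be individually integrable for general $f\in L^1(\disk)$, because of the factor $1/\zeta$ near the origin; the differentiation under the integral should be performed on the combined kernel or after a cutoff. Second, you obtain $\re\{(Tf)_b\}\equiv 0$ by formally setting $|z|=1$ in the integrand, but for $f\in L^1$ the area potential need not extend continuously to $\overline{\disk}$, so one must verify $\lim_{r\nearrow 1}\int_0^{2\pi}\re\{Tf(re^{i\theta})\}\gamma(\theta)\,d\theta=0$ for test functions $\gamma$ (e.g.\ by Fubini and dominated convergence applied to the kernel), and likewise justify that the Poisson--Schwarz pairing of $g\in\mathcal{D}'(\p\disk)$ recovers $g$ as a boundary value in the sense of Definition \ref{bvcircle}. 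You flag both issues yourself; neither changes the structure of the argument, which is the one carried out in \cite{WBD}.
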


    In \cite{BCSchwarz}, the author constructed a bicomplex version of the Schwarz boundary value problem. For this problem, a solution must satisfy \eqref{eqn: bccreqn} and the have component functions from its idempotent representation with real parts that satisfy boundary conditions and imaginary parts that satisfy a pointwise evaluation condition. This problem is solvable both in the classical case of continuous boundary functions and in the more general setting of the boundary condition being with respect to boundary values in the sense of distributions.  See the relevant theorem below.

    \begin{theorem}[Theorem 3.3 \cite{BCSchwarz}]\label{thm: generalbcschwar}
    For $b_1, b_2 \in \mathcal{D}'(\p \disk)$, $c_1, c_2 \in \R$, and $f \in L^1(\disk, \bc)$, the bicomplex-Schwarz boundary value problem 
\[
\begin{cases}
    \dbar w = f, & \text{in } \disk,\\
    \re\{w^+_b\} = b_1, \\
    \re\{w^-_b\} = b_2,\\
    \im\{ w^+(0)\}  = c_1, \\
    \im\{ w^-(0) \} = c_2
\end{cases}
\]
is uniquely solved by 
\begin{align*}
w(z) &= p^+ \left( \left[\frac{1}{2\pi} \langle b_1, P_r(\theta-\cdot) + i Q_r(\theta - \cdot)\rangle\right]^* + ic_1  \right)  \\
&\quad\quad + p^- \left(\frac{1}{2\pi} \langle b_2, P_r(\theta-\cdot) + i Q_r(\theta - \cdot)\rangle +ic_2 \right) +T_\bc(f)(z),
\end{align*}
    where the integral operator $T_\bc(\cdot)$, acting on functions $f \in L^1(\disk,\bc)$, is defined by 
\[
    T_\bc(f)(z):= p^+ T_*(f^+)(z) + p^- T(f^-)(z),
\]
with
\[
T(f^-)(z) := -\frac{1}{2 \pi} \iint_D \left(\frac{f^-(\zeta)}{\zeta} \frac{\zeta+z}{\zeta - z} + \frac{(f^-(\zeta))^*}{\zeta^*} \frac{1+z\zeta^*}{1-z\zeta^*} \right)\,d\xi\,d\eta
\]
and
\[
T_*(f^+)(z) := -\frac{1}{2 \pi} \iint_D \left(\frac{f^+(\zeta)}{\zeta^*} \frac{(\zeta+z)^*}{(\zeta - z)^*} + \frac{(f^+(\zeta))^*}{\zeta} \frac{1+z^*\zeta}{1-z^*\zeta} \right) \,d\xi\,d\eta.
\]
    \end{theorem}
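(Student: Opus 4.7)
The plan is to reduce the bicomplex Schwarz problem, via the idempotent representation, to two complex scalar Schwarz problems, each of which is solved by Theorem \ref{nonhomogsbvp}. Write $w = p^+ w^+ + p^- w^-$ and $f = p^+ f^+ + p^- f^-$ with $w^\pm, f^\pm$ complex-valued. Using Remark \ref{remark: idempotentdiffop} together with $(p^\pm)^2 = p^\pm$ and $p^+ p^- = 0$, the identity
\[
    \dbar w = p^+ \frac{\p w^+}{\p z} + p^- \frac{\p w^-}{\p z^*}
\]
holds, so by uniqueness of the idempotent representation the equation $\dbar w = f$ decouples into $\frac{\p w^+}{\p z} = f^+$ and $\frac{\p w^-}{\p z^*} = f^-$. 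Likewise, the scalar boundary and pointwise conditions split componentwise, since distributional boundary values and evaluation at $0$ are linear with respect to $p^+, p^-$ (cf. Proposition \ref{prop: pminlpfuncinlp}).

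For the $p^-$ component I would directly apply Theorem \ref{nonhomogsbvp} with data $(f^-, b_2, c_2)$: this furnishes the unique $w^-$ and reproduces the second bracket of the formula together with $T(f^-)$. For the $p^+$ component the equation $\frac{\p w^+}{\p z} = f^+$ is anti-holomorphic in type, so I would set $v := (w^+)^*$ and use $\frac{\p (\cdot)^*}{\p z^*} = \bigl(\frac{\p (\cdot)}{\p z}\bigr)^*$ to recast it as $\frac{\p v}{\p z^*} = (f^+)^*$, a standard complex Schwarz problem whose boundary condition $\re\{v_b\} = \re\{w^+_b\} = b_1$ is preserved and whose pointwise condition becomes $\im\{v(0)\} = -c_1$. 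Applying Theorem \ref{nonhomogsbvp} with data $((f^+)^*, b_1, -c_1)$ and then conjugating the result yields $w^+ = v^*$; the conjugation sends $\langle b_1, P_r + i Q_r\rangle + i(-c_1)$ to $[\langle b_1, P_r + i Q_r\rangle]^* + i c_1$, and the Cauchy-type kernels $\frac{\zeta + z}{\zeta - z}$ and $\frac{1+z\zeta^*}{1-z\zeta^*}$ pass to $\frac{(\zeta+z)^*}{(\zeta-z)^*}$ and $\frac{1+z^*\zeta}{1-z^*\zeta}$, which are precisely the kernels of $T_*(f^+)$. Recombining via $w = p^+ w^+ + p^- w^-$ produces the stated formula.

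Uniqueness then follows the same decomposition: if $w_1, w_2$ both solve the problem, their difference satisfies $\dbar(w_1 - w_2) = 0$ with zero real-part boundary data and zero imaginary part at the origin in each idempotent component; the uniqueness clause of Theorem \ref{nonhomogsbvp} forces $(w_1 - w_2)^\pm \equiv 0$, hence $w_1 = w_2$. The main obstacle I expect is the bookkeeping in the $p^+$ component, specifically verifying that complex-conjugating the output of Theorem \ref{nonhomogsbvp} produces kernels that match $T_*(f^+)$ term for term, including the correct placement of $\zeta$ versus $\zeta^*$ and $z$ versus $z^*$; the remaining steps are essentially linear-algebraic consequences of the idempotent calculus and require no new analytic input beyond the cited complex theorem.
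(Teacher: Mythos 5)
Your proposal is correct and follows the same idempotent-decomposition strategy that the paper (and its source \cite{BCSchwarz}) uses throughout: this theorem is quoted without proof here, but the analogous bicomplex Schwarz problem for the Beltrami equation in Section \ref{section: bvp} is argued exactly as you describe --- split into $p^{\pm}$ components, conjugate the $p^{+}$ equation to make it $\dbar$-type, apply the corresponding complex theorem (here Theorem \ref{nonhomogsbvp}) to each component, and recombine. Your bookkeeping for the $p^{+}$ kernels, including $(1\pm z\zeta^{*})^{*}=1\pm z^{*}\zeta$ and the sign flip $c_{1}\mapsto -c_{1}$, matches the stated form of $T_{*}(f^{+})$ term for term.
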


In Section \ref{section: bvp}, we consider a Schwarz-type boundary value problem similar to the one in Theorem \ref{thm: generalbcschwar} but where the differential equation is the bicomplex version of the Beltrami equation that we consider in Section \ref{section: beleqn}

The classic Dirichlet boundary value problem seeks a harmonic function with prescribed boundary values. In \cite{BeltramiSchwarz}, a Dirichlet problem is considered that is a Beltrami equation with solutions having a prescribed boundary value function. This problem is shown to be solvable under a special condition. The theorem is included below.

\begin{theorem}[Theorem 4.1 \cite{BeltramiSchwarz}]\label{thm: beltramidirichlet}
    The Dirichlet problem 
    \[
        \begin{cases}
            \frac{\p w}{\p z^*} = \mu \frac{\p w}{\p z} + f, & \text{ in } \disk,\\
            w|_{\p\disk} = \gamma,
        \end{cases}
    \]
    where $\mu \in \mathbb{C}$ such that $|\mu|\leq c < 1$, for some constant $c$, $f \in L^p(\disk)$, $p>2$, and $\gamma \in C(\p\disk,\mathbb{C})$, is solvable if and only if 
    \[
        \frac{1}{2\pi i} \int_{|\zeta| = 1} \gamma(\zeta) \frac{2 + (-\mu)z^* \zeta^*}{1 + (-\mu)z^*\zeta^*} \frac{z^* \,d\zeta}{1-z^*\zeta} = \sum_{k=0}^{\infty} (-1)^k (-\mu)^k \frac{1}{\pi} \iint_{|\zeta|<1} f(\zeta) ((\zeta - z)^*)^k \frac{(z^*)^{k+1} \,d\xi\,d\eta}{(1-z^*\zeta)^{k+1}}
    \]
    and the solution is 
    \[
        w(z) = \frac{1}{2\pi i} \int_{|\zeta| =1} \frac{\gamma(\zeta)}{\zeta - z} \,d\zeta + \frac{1}{2\pi i} \int_{|\zeta| =1} \frac{\gamma(\zeta)}{\zeta - z + c(\zeta - z)^*} \,d\zeta - \frac{1}{\pi} \iint_{|\zeta|<1} \frac{f(\zeta)}{\zeta - z + c(\zeta - z)^*}\,d\xi\,d\eta.
    \]
\end{theorem}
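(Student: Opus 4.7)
The plan is to reduce the Beltrami--Dirichlet problem to the classical inhomogeneous Cauchy--Riemann--Dirichlet problem via a linear change of variables, which is possible precisely because $\mu$ is constant. Set $\zeta = z + \mu z^*$. Since $|\mu|<1$, this transformation is invertible with $z = (\zeta - \mu\zeta^*)/(1-|\mu|^2)$, and it carries $\disk$ onto an ellipse $E$. Writing $w(z) = u(\zeta(z))$, a direct chain-rule computation gives
\[
\frac{\p w}{\p z^*} - \mu \frac{\p w}{\p z} = (1 - |\mu|^2)\,\frac{\p u}{\p \zeta^*},
\]
so the Beltrami equation becomes $(1-|\mu|^2)\,\p u/\p \zeta^* = f$. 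The Dirichlet problem on $\disk$ thereby becomes a standard $\dbar$-Dirichlet problem on $E$ with boundary data inherited from $\gamma$ via the pullback.

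Next, I would appeal to the classical theory of the inhomogeneous Cauchy--Riemann equation. Decomposing $u = u_{\mathrm{hol}} + T_E(f/(1-|\mu|^2))$, where $T_E$ is the Cauchy area transform over $E$, the problem reduces to finding a function holomorphic on $E$ whose boundary values match $\gamma - T_E(f/(1-|\mu|^2))\big|_{\p E}$. Solvability of that classical problem is characterized by a vanishing Cauchy-integral compatibility condition on $\p E$. Transforming back to the $z$-plane, one uses that the Cauchy kernel $1/(\zeta - \zeta_0)$ in the $\zeta$-plane becomes $1/((z-z_0) + \mu(z-z_0)^*)$ in the $z$-plane. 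This identifies $\frac{1}{\pi}\cdot \frac{1}{(\zeta-z) + \mu(\zeta-z)^*}$ as a fundamental solution of $\p/\p z^* - \mu\, \p/\p z$ on $\disk$ and accounts for the form of both the stated solvability identity and the Beltrami kernel appearing in the explicit solution formula.

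Finally, I would assemble the formula: the first Cauchy integral in the statement supplies the standard holomorphic Cauchy extension of $\gamma$ on $\disk$; the second integral, with the Beltrami kernel, contributes the correction needed to make the holomorphic part agree with the actual boundary data after the change of variables; and the area integral is the Beltrami Cauchy transform of $f$, providing a particular solution. Verification is then a direct computation: differentiate under the integral using the fundamental-solution property of the Beltrami kernel to confirm that $w$ satisfies $\p w/\p z^* = \mu\,\p w/\p z + f$, and apply the Plemelj--Sokhotski jump relations on $\p \disk$ to check $w|_{\p \disk} = \gamma$, with the stated integral identity being exactly what makes the boundary contribution of the area integral cancel the excess Cauchy-integral term.

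The main obstacle is the geometric mismatch between $\disk$ and $E$: since $\p E$ is not a circle, the Cauchy-integral formulas on $\p E$ must be carefully pulled back to $\p\disk$, and reconciling the two distinct boundary Cauchy integrals in the final formula with integrals over the single boundary $\p\disk$ requires the delicate compatibility computation that yields the stated integral identity. Care is also needed at the endpoint regularity $\gamma \in C(\p\disk)$, since the jump relations require boundedness of the principal values that arise.
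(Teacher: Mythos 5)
This statement is not proved in the paper at all: it is quoted verbatim from Harutyunyan \cite{BeltramiSchwarz} (Theorem 4.1 there) as background material, so there is no in-paper proof to compare against. Judged on its own terms, your plan is a reasonable alternative strategy for the constant-coefficient case --- the substitution $\zeta = z + \mu z^*$ and the identity $\frac{\p w}{\p z^*} - \mu \frac{\p w}{\p z} = (1-|\mu|^2)\frac{\p u}{\p \zeta^*}$ are correct, as is the observation that $\frac{1}{\pi}\frac{1}{(\zeta - z) + \mu(\zeta - z)^*}$ is a fundamental solution of the operator. But it differs in spirit from the source, which works directly on the disk via a Cauchy--Pompeiu-type representation for the Beltrami operator and a Neumann-series expansion; the geometric series $\sum_k (-1)^k(-\mu)^k$ and the factor $\frac{2 + (-\mu)z^*\zeta^*}{1+(-\mu)z^*\zeta^*}$ in the solvability condition come from expanding exactly such a series, not from an ellipse pullback.

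The genuine gap is that the central content of the theorem --- the \emph{if and only if} solvability identity --- is never derived. You correctly note that the Dirichlet problem for a holomorphic function on $E$ is solvable iff a Cauchy-integral compatibility condition holds (equivalently, the exterior Cauchy integral of the corrected boundary data vanishes), but you stop at saying this ``accounts for the form'' of the stated identity. Turning that abstract condition into the specific displayed identity requires: (i) expressing the exterior Cauchy integral over $\p E$ as an integral over $\p\disk$ after the change of variables, which introduces the non-holomorphic Jacobian factor $d\zeta + \mu\,d\zeta^*$ on the boundary and is precisely where the rational function of $z^*\zeta^*$ and the infinite series must emerge; and (ii) verifying that the two distinct boundary kernels in the solution formula ($1/(\zeta-z)$ and the Beltrami kernel) combine to reproduce $\gamma$ on $\p\disk$ via a jump relation for the Beltrami--Cauchy transform, which is asserted but not established. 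Without these computations the proposal establishes neither direction of the equivalence nor that the displayed $w$ actually attains the boundary value $\gamma$, so as written it is an outline of a possible proof rather than a proof.
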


In \cite{BCSchwarz}, the author considered a bicomplex Dirichlet problem for harmonic functions. Since harmonic functions with respect to the bicomplex differential operators $\p$ and $\dbar$ are precisely the same as harmonic functions with respect to the classic complex differential operators $\frac{\p}{\p z}$ and $\frac{\p }{\p z^*}$, the novelty of the Dirichlet problem considered there is with respect to the boundary condition now being with respect to a bicomplex-valued function or distribution. The referenced theorem is recorded below. 

\begin{theorem}[Corollary 4.3 \cite{BCSchwarz}]
    The bicomplex Dirichlet problem
    \[
        \begin{cases}
            \p \dbar f = 0\\
            f_b = g
        \end{cases}
    \]
for $g \in \mathcal{D'}(\p \disk, \bc) := \{ h \in \mathcal{D}'(\p \disk): \langle h, \varphi \rangle \in \bc, \text{ for }\varphi \in C^\infty(\p \disk)\}$, is uniquely solved by 
\[
    f = p^+ \frac{1}{2\pi} \langle g^+, P_r(\theta - \cdot) \rangle + p^- \frac{1}{2\pi} \langle g^-, P_r(\theta - \cdot)  \rangle.
\]
\end{theorem}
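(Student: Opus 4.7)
The plan is to reduce the bicomplex Dirichlet problem to two independent complex Dirichlet problems via the idempotent representation. Writing $f = p^+ f^+ + p^- f^-$ and invoking Remark \ref{remark: idempotentdiffop}, together with the algebraic identities $(p^\pm)^2 = p^\pm$ and $p^+ p^- = 0$, the composition $\p\dbar$ acts as
\[
    \p \dbar f = p^+ \frac{\p}{\p z^*} \frac{\p}{\p z} f^+ + p^- \frac{\p}{\p z} \frac{\p}{\p z^*} f^- = \tfrac{1}{4}\bigl(p^+ \Delta f^+ + p^- \Delta f^-\bigr).
\]
By uniqueness of the idempotent representation, $\p \dbar f = 0$ in $\disk$ is equivalent to the pair of classical Laplace equations $\Delta f^\pm = 0$ in $\disk$.

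Next, I would argue that taking boundary values commutes with the idempotent decomposition: testing against any $\varphi \in C^\infty(\p\disk)$, the defining limit in Definition \ref{bvcircle} exists for $f$ if and only if it exists separately for $f^\pm$, and in that case $f_b = p^+ f^+_b + p^- f^-_b$. Hence the boundary condition $f_b = g = p^+ g^+ + p^- g^-$ is, again by uniqueness of the idempotent representation applied to elements of $\mathcal{D}'(\p\disk,\bc)$, equivalent to the scalar distributional boundary conditions $f^\pm_b = g^\pm$ in $\mathcal{D}'(\p\disk)$.

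At this point the problem splits into two copies of the classical complex Dirichlet problem on $\disk$ for harmonic functions with distributional boundary data $g^\pm \in \mathcal{D}'(\p \disk)$. This problem is well-posed: the unique solution is the Poisson extension
\[
    f^\pm(re^{i\theta}) = \frac{1}{2\pi}\langle g^\pm, P_r(\theta - \cdot)\rangle,
\]
which is harmonic in $\disk$ and attains $g^\pm$ as its distributional boundary value (this follows from standard facts about the Poisson kernel as an approximate identity, tested against $C^\infty(\p\disk)$). Recombining via $f = p^+ f^+ + p^- f^-$ yields exactly the claimed formula.

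Uniqueness is then immediate: any two solutions of the bicomplex Dirichlet problem, when decomposed idempotently, give two solutions of each complex Dirichlet problem with identical distributional boundary data, and the classical uniqueness forces equality of the $p^\pm$-components, hence equality of the bicomplex solutions. The main delicate point is confirming that distributional boundary values behave well under the idempotent splitting for $\bc$-valued distributions (so that $f^\pm_b$ are the ``natural'' components of $f_b$), but this reduces to the linearity of the distributional pairing and the fact that $p^\pm$ are constants with respect to the boundary variable.
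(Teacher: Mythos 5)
Your proposal is correct, and it follows exactly the route the paper indicates: this theorem is quoted as background (Corollary 4.3 of \cite{BCSchwarz}, not proved in this paper), and the remark preceding it already observes that $\p\dbar$-harmonicity coincides with ordinary harmonicity, so the only content is splitting the bicomplex boundary datum via the idempotent representation into two scalar Poisson problems --- precisely your argument. Your computation $\p\dbar f = p^+\tfrac{1}{4}\Delta f^+ + p^-\tfrac{1}{4}\Delta f^-$, the equivalence $f_b = g \iff f^\pm_b = g^\pm$, and the appeal to the classical distributional Dirichlet problem are all sound and consistent with how the paper handles every analogous reduction.
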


In Section \ref{section: bvp}, we prove that a Dirichlet problem with a bicomplex Beltrami equation and bicomplex-valued boundary conditions is solvable and provide a formula for the solution.

\section{The Beltrami Equation}\label{section: beleqn}

In this section, we define a bicomplex version of the classic Beltrami equation using the bicomplex $\dbar$ operator from Definition \ref{bcdbardef}. This mirrors the consideration of the bicomplex Vekua-type equations considered in, for example, \cite{BCTransmutation, BCBergman, FundBicomplex, ComplexSchr,BCHoiv, BCHarmVek} and bicomplex nonhomogeneous Cauchy-Riemann equations in \cite{BCAtomic, BCSchwarz}. Using these equations, we define Hardy classes of bicomplex-valued solutions and show these functions exhibit properties associated with the classical Hardy spaces. This extends the work found in \cite{Zins, quasiHardy, quasiHardy2}, amongst others, in the setting of complex-valued functions.

    \subsection{Definition and Representation}

\begin{deff}
    Let $\mu : \disk \to \bc$ be such that there exists a real constant $c$ and $||\mu||_{L^\infty(\disk,\bc)} \leq c < 1$. We call any equation of the form 
    \[
        \dbar w = \mu \, \p w
    \]
    a $\bc$-Beltrami equation. 
\end{deff}

The next proposition shows that solutions of $\bc$-Beltrami equations are representable, using the idempotent representation, in terms of solutions of $\mathbb{C}$-Beltrami equations. Since the function and the components of the idempotent representation are comparable in the bicomplex norm, this is an invaluable tool for the analysis of solutions to $\bc$-Beltrami equations. This idea was previously used in the $\mu \equiv 0$ case ($\bc$-holomorphic) in \cite{BCAtomic} and in the case of solutions to $\bc$-Vekua equations in \cite{BCHoiv, BCHarmVek}.  

\begin{prop}\label{prop: bcbelimpliescomplexbel}
    Let $\mu : \disk \to \bc$ be such that there exists a real constant $c$ and $||\mu||_{L^\infty(\disk,\bc)} \leq c < 1$. Every solution $w:\disk \to\bc$ of 
    \begin{equation}\label{bcbeltramieqn}
        \dbar w = \mu \, \p w
    \end{equation}
    has the form 
    \[
        w = p^+ w^+ + p^- w^-,
    \]
    where $w^+:\disk\to\mathbb{C}$ solves the $\mathbb{C}$-Beltrami equation
    \[
        \frac{\p (w^+)^*}{\p z^*} = (\mu^+)^* \frac{\p (w^+)^*}{\p z}
    \]
    and $w^- : \disk \to \mathbb{C}$ solves the $\mathbb{C}$-Beltrami equation
    \[
        \frac{\p w^-}{\p z^*} = \mu^- \frac{\p w^-}{\p z}.
    \]
\end{prop}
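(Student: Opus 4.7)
The plan is to exploit the idempotent representation at two levels: for the bicomplex values of $w$ and $\mu$ (Proposition \ref{everybchasplusandminus}), and for the differential operators themselves (Remark \ref{remark: idempotentdiffop}). First I would write $w = p^+ w^+ + p^- w^-$ and $\mu = p^+ \mu^+ + p^- \mu^-$ for the unique complex-valued components. Applying $\dbar = p^+ \frac{\p}{\p z} + p^- \frac{\p}{\p z^*}$ and $\p = p^+ \frac{\p}{\p z^*} + p^- \frac{\p}{\p z}$ to $w$, the orthogonality relations $(p^\pm)^2 = p^\pm$ and $p^+ p^- = 0$ annihilate the cross terms, giving
\[
\dbar w = p^+ \frac{\p w^+}{\p z} + p^- \frac{\p w^-}{\p z^*}, \qquad \p w = p^+ \frac{\p w^+}{\p z^*} + p^- \frac{\p w^-}{\p z}.
\]

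Next, I would multiply $\mu$ by $\p w$ using the same orthogonality to obtain
\[
\mu \, \p w = p^+ \mu^+ \frac{\p w^+}{\p z^*} + p^- \mu^- \frac{\p w^-}{\p z}.
\]
Since the components of a bicomplex number in the $\{p^+, p^-\}$ basis are unique, equating $\dbar w = \mu \, \p w$ componentwise splits the single $\bc$-equation into two $\C$-equations
\[
\frac{\p w^+}{\p z} = \mu^+ \frac{\p w^+}{\p z^*}, \qquad \frac{\p w^-}{\p z^*} = \mu^- \frac{\p w^-}{\p z}.
\]
The $p^-$ equation is precisely the claimed $\mathbb{C}$-Beltrami equation for $w^-$. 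For the $p^+$ equation, I would take the complex conjugate of both sides and apply the standard identities $\bigl(\tfrac{\p f}{\p z}\bigr)^* = \tfrac{\p f^*}{\p z^*}$ and $\bigl(\tfrac{\p f}{\p z^*}\bigr)^* = \tfrac{\p f^*}{\p z}$ to arrive at $\tfrac{\p (w^+)^*}{\p z^*} = (\mu^+)^* \tfrac{\p (w^+)^*}{\p z}$, the stated equation for $(w^+)^*$.

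The argument is essentially algebraic once the two idempotent representations are in hand, so there is no serious analytic obstacle. The main point requiring care is bookkeeping with the two distinct conjugations in use ($*$ on $\C$ versus $\overline{\cdot}$ on $\bc$) and with the two different first-order Wirtinger operators $\tfrac{\p}{\p z}$ and $\tfrac{\p}{\p z^*}$, which get swapped both by the idempotent formulas for $\p, \dbar$ and by complex conjugation. The mild subtlety is that this conjugation step is exactly why the $p^+$-component equation in the statement is written for $(w^+)^*$ rather than $w^+$ itself, matching the conjugation pattern already seen in Theorem \ref{bcvekimpliescvek}. No hypothesis on $\mu$ beyond its uniform bound is needed for this reduction; the bound $\|\mu\|_{L^\infty(\disk,\bc)} \leq c<1$ is simply inherited coordinate-wise via \eqref{bcbasicestimates}, which will matter only for the analytic results that follow.
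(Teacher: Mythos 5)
Your proposal is correct and follows essentially the same route as the paper's proof: decompose $w$, $\mu$, and the operators $\p$, $\dbar$ idempotently, use $p^+p^-=0$ and uniqueness of the idempotent representation to split the equation into its $p^\pm$ components, and then conjugate the $p^+$ equation via the Wirtinger identities to obtain the stated $\mathbb{C}$-Beltrami equation for $(w^+)^*$. Your added remarks on the bookkeeping of the two conjugations and on why the hypothesis on $\mu$ plays no role here are accurate but not needed for the argument.
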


\begin{proof}
    By Proposition \ref{everybchasplusandminus}, every function $w:\disk\to\bc$ has the form 
    \[
        w = p^+ w^+ + p^- w^-,
    \]  
    where $w^\pm : \disk\to\C$. Similarly, an idempotent representation holds for $\mu$. So, if $w$ solves \eqref{bcbeltramieqn}, then, by  Remark \ref{remark: idempotentdiffop}, we have
    \begin{align*}
        \dbar w &= \mu \p w \\
        \left(p^+\frac{\p}{\p z} + p^- \frac{\p}{\p z^*}\right)(p^+w^+ + p^- w^-) &= (p^+\mu^+ + p^- \mu^-)  \left(p^+\frac{\p}{\p z^*} + p^- \frac{\p}{\p z}\right)(p^+w^+ + p^- w^-)\\
        p^+\frac{\p w^+}{\p z} + p^- \frac{\p w^-}{\p z^*} &= p^+ \mu^+ \frac{\p w^+}{\p z^*} + p^- \mu^- \frac{\p w^-}{\p z}.
    \end{align*}
    Since idempotent representations are unique, it follows that
    \[
        \frac{\p w^+}{\p z} = \mu^+ \frac{\p w^+}{\p z^*}
    \quad \text{and} \quad
        \frac{\p w^-}{\p z^*} = \mu^- \frac{\p w^-}{\p z}.
    \]
    Applying complex conjugation to both sides of the left-hand side equation in the display above, we have
    \[
        \frac{\p (w^+)^*}{\p z^*} = (\mu^+)^* \frac{\p (w^+)^*}{\p z}
    \quad \text{and} \quad
        \frac{\p w^-}{\p z^*} = \mu^- \frac{\p w^-}{\p z}.
    \]
\end{proof}

\subsection{Hardy Spaces}

Next, we define the Hardy classes of solutions to bicomplex Beltrami equations. Note that as in Definitions \ref{deff: bcholohardy} and 
\ref{deff: bcvekhardy}, the size condition is with respect to an $H^p$ norm using the $\bc$-norm instead of complex modulus. 

\begin{deff}
For $\mu : \disk \to \bc$ such that there exists a real constant $c$ and $||\mu||_{L^\infty(\disk,\bc)} \leq c < 1$ and $0 < p < \infty$, we define the $\bc$-Beltrami-Hardy spaces $H^p_{\text{Bel},\mu}(\disk,\bc)$ to be those functions $w: \disk\to\bc$ that solve
    \[
        \dbar w = \mu \, \p w
    \]
and satisfy
    \[
        ||w||_{H^p_{\bc}} := \left( \sup_{0< r < 1} \frac{1}{2\pi}\int_0^{2\pi} ||w(re^{i\theta})||_{\bc}^p\,d\theta\right)^{1/p} < \infty.
    \]
\end{deff}

In \cite{BCAtomic}, the author proved that a function is in a $\bc$-holomorphic Hardy space if and only if the component functions in the idempotent representation are themselves elements of the classical Hardy spaces of complex-valued functions. See Theorem 4.1 of \cite{BCAtomic}. This idea was extended in \cite{BCHoiv} where the author showed that a function is in a Hardy space of solutions to a $\bc$-Vekua equation if and only if the component functions in the idempotent representation are elements of complex-valued Hardy spaces of solutions to the complex Vekua equation, as studied in \cite{KlimBook, PozHardy, CompOp, moreVekHardy, conjbel}. See Theorem 4.10 of \cite{BCHoiv}. The next theorem extends Proposition \ref{prop: bcbelimpliescomplexbel} and shows that the relationship between $\bc$-valued Hardy classes and the associated $\mathbb{C}$-valued Hardy classes is recovered for $H^p_{Bel,\mu}(\disk,\bc)$ functions also. 

\begin{theorem}\label{thm: bcbelhardyrep}
    For $\mu : \disk \to \bc$ such that there exists a real constant $c$ satisfying $||\mu||_{L^\infty(\disk,\bc)} \leq c < 1$ and $0 < p < \infty$, $w = p^+ w^+ + p^- w^- \in H^p_{Bel,\mu}(\disk,\bc)$ if and only if $(w^+)^* \in H^p_{Bel, (\mu^+)^*}(\disk)$ and $w^- \in H^p_{Bel,\mu^-}(\disk)$. 
\end{theorem}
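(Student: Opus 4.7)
The plan is to reduce the bicomplex statement to the corresponding complex one via the idempotent decomposition, using Proposition \ref{prop: bcbelimpliescomplexbel} for the differential equation part and the pointwise estimates in \eqref{bcbasicestimates} for the norm part. Since every $w:\disk\to\bc$ can be uniquely written as $w = p^+ w^+ + p^- w^-$ with $w^\pm : \disk\to\C$, the argument naturally splits into a forward direction and a reverse direction.

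For the forward direction, assume $w \in H^p_{\text{Bel},\mu}(\disk,\bc)$. Proposition \ref{prop: bcbelimpliescomplexbel} directly gives that $(w^+)^*$ solves $\frac{\p (w^+)^*}{\p z^*} = (\mu^+)^* \frac{\p (w^+)^*}{\p z}$ and that $w^-$ solves $\frac{\p w^-}{\p z^*} = \mu^- \frac{\p w^-}{\p z}$. For the Hardy norm bound, I would use the left inequality in \eqref{bcbasicestimates} to get $|w^\pm(re^{i\theta})|^p \leq 2^{p/2} \, \|w(re^{i\theta})\|_\bc^p$, integrate over $[0,2\pi]$, and take the supremum over $0 < r < 1$. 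Since complex conjugation preserves modulus, $(w^+)^*$ has the same complex $H^p$ norm as $w^+$, so both $(w^+)^* \in H^p_{\text{Bel},(\mu^+)^*}(\disk)$ and $w^- \in H^p_{\text{Bel},\mu^-}(\disk)$.

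For the reverse direction, assume both $(w^+)^* \in H^p_{\text{Bel},(\mu^+)^*}(\disk)$ and $w^- \in H^p_{\text{Bel},\mu^-}(\disk)$, and set $w := p^+ w^+ + p^- w^-$. Conjugating the equation satisfied by $(w^+)^*$ yields $\frac{\p w^+}{\p z} = \mu^+ \frac{\p w^+}{\p z^*}$, and combining this with the equation for $w^-$ in Remark \ref{remark: idempotentdiffop} shows that $\dbar w = \mu \, \p w$ (this is exactly the computation in the proof of Proposition \ref{prop: bcbelimpliescomplexbel} read in reverse). For the Hardy norm bound, I would apply the right inequality in \eqref{bcbasicestimates} pointwise, raise to the $p$-th power, and invoke the elementary inequality $(a+b)^p \leq C_p(a^p + b^p)$, where $C_p = 2^{p-1}$ when $p\geq 1$ and $C_p = 1$ when $0 < p < 1$. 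Integrating and taking the supremum over $r$ gives $\|w\|_{H^p_\bc}^p \leq \tfrac{C_p}{2^{p/2}}\bigl(\|w^+\|_{H^p(\disk)}^p + \|w^-\|_{H^p(\disk)}^p\bigr)<\infty$.

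I do not expect any serious obstacle: the entire theorem is a packaging of Proposition \ref{prop: bcbelimpliescomplexbel} together with the two-sided bounds in \eqref{bcbasicestimates}. The only point requiring a small amount of care is the case split on whether $p\geq 1$ or $0 < p < 1$ in the reverse direction, which is handled by the elementary pointwise inequality above rather than by Minkowski's inequality; and keeping track of the complex conjugate on the $p^+$ component, which is purely notational since $|w^+| = |(w^+)^*|$.
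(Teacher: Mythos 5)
Your proposal is correct and follows essentially the same route as the paper's proof: the forward direction combines Proposition \ref{prop: bcbelimpliescomplexbel} with the lower bound in \eqref{bcbasicestimates}, and the reverse direction verifies the equation by direct computation and controls the $H^p$ norm via the upper bound in \eqref{bcbasicestimates} together with the elementary inequality $(a+b)^p \leq C_p(a^p+b^p)$. Your explicit case split on $p \geq 1$ versus $0 < p < 1$ only makes precise the constant $C_p$ that the paper leaves implicit.
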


\begin{proof}
    By Proposition \ref{prop: bcbelimpliescomplexbel}, if $w$ solves $\dbar w = \mu \p w$, then 
    \[
        \frac{\p (w^+)^*}{\p z^*} = (\mu^+)^* \frac{\p (w^+)^*}{\p z}
    \quad \text{and} \quad
        \frac{\p w^-}{\p z^*} = \mu^- \frac{\p w^-}{\p z}.
    \]
    By \eqref{bcbasicestimates}, for every $r \in (0,1)$, we have 
    \begin{align*}
    \int_0^{2\pi} |(w^+(re^{i\theta}))^*|^p \,d\theta &= 
        \int_0^{2\pi} |w^+(re^{i\theta})|^p \,d\theta \\
        &\leq 2^{p/2}\int_0^{2\pi} ||w(re^{i\theta})||_\bc^p \,d\theta \\
        &\leq 2^{p/2} \sup_{0 < r< 1} \int_0^{2\pi} ||w(re^{i\theta})||_\bc^p \,d\theta< \infty
    \end{align*}
    and 
    \begin{align*}
         \int_0^{2\pi} |w^-(re^{i\theta})|^p \,d\theta 
         &\leq 2^{p/2}\int_0^{2\pi} ||w(re^{i\theta})||_\bc^p \,d\theta \\
         &\leq 2^{p/2}\sup_{0 < r < 1}\int_0^{2\pi} ||w(re^{i\theta})||_\bc^p \,d\theta < \infty.
    \end{align*}
    Thus, 
    \[
        \sup_{0 < r < 1}  \int_0^{2\pi} |(w^+(re^{i\theta}))^*|^p \,d\theta < \infty,
    \]
    and
    \[
        \sup_{0 < r < 1}\int_0^{2\pi} |w^-(re^{i\theta})|^p \,d\theta < \infty.
    \]
    Therefore, $(w^+)^* \in H^p_{Bel, (\mu^+)^*}(\disk)$ and $w^- \in H^p_{Bel,\mu^-}(\disk)$.

    Now, if $(w^+)^* \in H^p_{Bel, (\mu^+)^*}(\disk)$ and $w^- \in H^p_{Bel,\mu^-}(\disk)$, then $w = p^+ w^+ + p^- w^-$ solves $\dbar w = \mu \p w$ by direct computation. Also, by \eqref{bcbasicestimates}, we have for every $r \in (0,1)$ that 
    \begin{align*}
        \int_0^{2\pi} ||w(re^{i\theta})||_\bc^p \,d\theta 
        &\leq 2^{-p/2} \int_0^{2\pi} (|w^+(re^{i\theta})|+|w^-(re^{i\theta})|)^p\,d\theta \\
        &\leq C_p \left( \int_0^{2\pi} |(w^+(re^{i\theta}))^*|^p \,d\theta + \int_0^{2\pi} |w^-(re^{i\theta})|^p\,d\theta\right) \\
        &\leq C_p \left( \sup_{0 < r < 1}\int_0^{2\pi} |(w^+(re^{i\theta}))^*|^p \,d\theta + \sup_{0 < r < 1}\int_0^{2\pi} |w^-(re^{i\theta})|^p\,d\theta\right) < \infty,
    \end{align*}
    where $C_p$ is a constant that depends on only $p$. Thus, 
    \[
        \sup_{0 < r < 1}\int_0^{2\pi} ||w(re^{i\theta})||_\bc^p \,d\theta < \infty,
    \]
    and $w \in H^p_{Bel,\mu}(\disk,\bc)$. 

\end{proof}

\begin{theorem}\label{thm: bcbelhpinbetterlp}
    For $\mu \in W^{1,s}(\disk,\bc)$, $s>2$, such that there exists a real constant $c$ satisfying $||\mu||_{L^\infty(\disk,\bc)} \leq c < 1$ and $0 < p < \infty$, every $w \in H^p_{Bel,\mu}(\disk,\bc)$ is an element of $L^m(\disk,\bc)$, for every $0 < m < 2p$. 
\end{theorem}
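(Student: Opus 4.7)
The plan is to reduce the bicomplex statement to its complex counterpart, Theorem \ref{thm: belhpinbetterlp}, by means of the idempotent decomposition, and then reassemble the components via the estimates in \eqref{bcbasicestimates} (or equivalently Proposition \ref{prop: pminlpfuncinlp}). This is the same strategy used throughout the paper for bicomplex Hardy classes, and the hard part will be verifying that the complex coefficients $(\mu^+)^*$ and $\mu^-$ satisfy the hypotheses of Theorem \ref{thm: belhpinbetterlp}, not the $L^p$ bound on the solution itself.

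First, I would apply Theorem \ref{thm: bcbelhardyrep}: writing $w = p^+ w^+ + p^- w^-$, the hypothesis $w \in H^p_{Bel,\mu}(\disk,\bc)$ gives $(w^+)^* \in H^p_{Bel,(\mu^+)^*}(\disk)$ and $w^- \in H^p_{Bel,\mu^-}(\disk)$. Next I would verify the coefficient hypotheses: since $\mu^{\pm} = \sca \mu \mp i \vect \mu$ is a linear combination of the scalar and vector parts of $\mu$, the assumption $\mu \in W^{1,s}(\disk,\bc)$ transfers to $(\mu^+)^*, \mu^- \in W^{1,s}(\disk)$, and the pointwise estimate $|\mu^{\pm}(z)| \leq \sqrt{2}\,\|\mu(z)\|_\bc$ from \eqref{bcbasicestimates} gives $L^\infty$ bounds on each component in terms of $\|\mu\|_{L^\infty(\disk,\bc)}$, which remain strictly less than $1$ under the standing hypothesis on $c$.

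Having verified the hypotheses, I would apply Theorem \ref{thm: belhpinbetterlp} separately to $(w^+)^*$ and to $w^-$, obtaining $(w^+)^*,\, w^- \in L^m(\disk)$ for every $0 < m < 2p$. By Proposition \ref{prop: pminlpfuncinlp} (or directly by the upper estimate in \eqref{bcbasicestimates}, which bounds $\|w\|_\bc$ by a constant multiple of $|w^+| + |w^-|$), this implies $w = p^+ w^+ + p^- w^- \in L^m(\disk,\bc)$ for every $0 < m < 2p$, which is the desired conclusion.

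The main obstacle to watch for is the coefficient $L^\infty$ bound: the bicomplex ellipticity constraint $\|\mu\|_{L^\infty(\disk,\bc)} \leq c < 1$ only yields $\|\mu^\pm\|_{L^\infty(\disk)} \leq \sqrt{2}\,c$, so Theorem \ref{thm: belhpinbetterlp} applies to the complex components exactly when $\sqrt{2}\,c < 1$. Aside from tracking this constant (and possibly adjusting the admissible range of $c$ accordingly), the rest of the argument is a routine combination of the idempotent representation and the norm comparisons \eqref{bcbasicestimates}, exactly in the spirit of the proof of Theorem \ref{thm: bcbelhardyrep}.
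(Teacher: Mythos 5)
Your proposal follows essentially the same route as the paper's proof: decompose via Theorem \ref{thm: bcbelhardyrep}, apply the complex result Theorem \ref{thm: belhpinbetterlp} to $(w^+)^*$ and $w^-$, and recombine with Proposition \ref{prop: pminlpfuncinlp}. Your caveat about the ellipticity constant --- that $||\mu||_{L^\infty(\disk,\bc)} \leq c$ only yields $||\mu^\pm||_{L^\infty(\disk)} \leq \sqrt{2}\,c$ via \eqref{bcbasicestimates}, so the hypotheses of the complex theorem are only verified outright when $\sqrt{2}\,c < 1$ --- is a legitimate observation that the paper's own proof passes over in silence, and is worth recording.
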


\begin{proof}
By Theorem \ref{thm: bcbelhardyrep}, if $w \in H^p_{Bel,\mu}(\disk,\bc)$, then $(w^+)^* \in H^p_{Bel, (\mu^+)^*}(\disk)$ and $w^- \in H^p_{Bel,\mu^-}(\disk)$. By Theorem \ref{thm: belhpinbetterlp}, since $(w^+)^* \in H^p_{Bel, (\mu^+)^*}(\disk)$ and $w^- \in H^p_{Bel,\mu^-}(\disk)$, it follows that $(w^+)^*,w^- \in L^m(\disk)$, for $0 < m < 2p$. Hence, $w^+ \in L^m (\disk)$, for $0 < m < 2p$. By Proposition \ref{prop: pminlpfuncinlp}, $w \in L^m(\disk, \bc)$, for $0 < m < 2p$. 
\end{proof}

\subsection{Boundary Behavior}

Now, we show that functions in $H^p_{Bel,\mu}(\disk,\bc)$ have nontangential boundary values in $L^p(\p\disk,\bc)$ and converge to those nontangential boundary values in the $L^p$ norm. This generalizes Theorem \ref{bvcon} and Theorem \ref{thm: belbvcon} to the setting of Hardy classes of solutions to $\bc$-Beltrami equations.

    \begin{theorem}\label{thm: bcbelbv}\label{thm: bcbetterbvbetterleb}\label{thm: bczeroonbcimpliesequivzero}
        For $\mu : \disk \to \bc$ such that there exists a real constant $c$ satisfying $||\mu||_{L^\infty(\disk,\bc)} \leq c < 1$ and $0 < p < \infty$, the following three statements follow:
	\begin{enumerate}
		\item Every $w \in H^p_{Bel,\mu}(\disk,\bc)$ has a nontangential boundary value $w_{nt} \in L^p(\p \disk, \bc)$ and
        \[
        \lim_{r \nearrow 1} \int_0^{2\pi} ||w_{nt}(e^{i\theta}) - w(re^{i\theta}) ||^p_{\bc} \, d\theta  = 0. 
    \]  
	\item Every $w \in H^p_{Bel,\mu}(\disk,\bc)$ with nontangential boundary value $w_{nt} \in L^s(\p\disk,\bc)$, where $s>p$, is an element of $H^s_{Bel,\mu}(\disk,\bc)$. 
	\item Every $w \in H^p_{Bel,\mu}(\disk,\bc)$ with nontangential boundary value $w_{nt}$ that vanishes on a set $E \subset \p \disk$ of positive measure is identically equal to zero. 
	\end{enumerate}
    \end{theorem}

    \begin{proof}
        By Theorem \ref{thm: bcbelhardyrep}, $w \in H^p_{Bel,\mu}(\disk,\bc)$ if and only if $(w^+)^* \in H^p_{Bel, (\mu^+)^*}(\disk)$ and $w^- \in H^p_{Bel,\mu^-}(\disk)$. By Theorem \ref{thm: belbvcon}, $(w^+)^*$ and $w^-$ have nontangential boundary values $(w^+)^*_{nt}$ and $w^-_{nt}$, respectively, $(w^+)^*_{nt}, w^-_{nt} \in L^p(\p \disk)$,
        \begin{equation}\label{eqn: plusgoestozero}
                \lim_{r\nearrow 1} \int_0^{2\pi} |(w^+)^*(re^{i\theta})- (w^+)^*_{nt}(e^{i\theta})|^p \, d\theta = 0,
        \end{equation}
        and 
        \begin{equation}\label{eqn: minusgoestozero}
            \lim_{r\nearrow 1} \int_0^{2\pi} |w^-(re^{i\theta})- w^-_{nt}(e^{i\theta})|^p \, d\theta = 0.
        \end{equation}
        Since $w = p^+ w^+ + p^- w^-$, it follows that $w_{nt} = p^+ w^+_{nt} + p^- w^-_{nt}$ exists, where $w^+_{nt} = ((w^+)^*_{nt})^*$. Since $|w^+_{nt}(e^{i\theta})| = |(w^+)^*_{nt}(e^{i\theta})|$, for all $e^{i\theta} \in \p \disk$, it follows that $w^+_{nt} \in L^p(\p \disk)$. Thus, by \eqref{bcbasicestimates}, we have
        \begin{align*}
            \int_0^{2\pi} ||w_{nt}(e^{i\theta})||_{\bc}^p\,d\theta 
            &\leq 2^{-p/2}\int_0^{2\pi} \left( |w^+_{nt}(e^{i\theta})| + |w^-_{nt}(e^{i\theta})|\right)^p\,d\theta\\
            &\leq C_p \left(\int_0^{2\pi}  |w^+_{nt}(e^{i\theta})|^p \,d\theta + \int_0^{2\pi}  |w^-_{nt}(e^{i\theta})|^p\,d\theta  \right) < \infty,
        \end{align*}
        where $C_p$ is a constant that depends on only $p$, and $w_{nt} \in L^p(\p\disk,\bc)$. 

        Now, for $r \in (0,1)$, observe that
        \begin{align*}
            & \int_0^{2\pi} ||w_{nt}(e^{i\theta}) - w(re^{i\theta}) ||^p_{\bc} \, d\theta\\
            &\leq C_p \left( \int_0^{2\pi} |w^+_{nt}(e^{i\theta}) - w^+(re^{i\theta})|^p \,d\theta +  \int_0^{2\pi} |w^-_{nt}(e^{i\theta}) - w^-(re^{i\theta})|^p \,d\theta \right),
        \end{align*}
        where $C_p$ is a constant that depends only on $p$, by \eqref{bcbasicestimates}. Therefore, by \eqref{eqn: plusgoestozero} and \eqref{eqn: minusgoestozero}, 
        \begin{align*}
            &\lim_{r\nearrow 1} \int_0^{2\pi} ||w_{nt}(e^{i\theta}) - w(re^{i\theta}) ||^p_{\bc} \, d\theta \\
            &\leq \lim_{r \nearrow 1} C_p \left( \int_0^{2\pi} |w^+_{nt}(e^{i\theta}) - w^+(re^{i\theta})|^p \,d\theta +  \int_0^{2\pi} |w^-_{nt}(e^{i\theta}) - w^-(re^{i\theta})|^p \,d\theta \right)= 0.
        \end{align*}

        Now, suppose $w_{nt} \in L^s(\p\disk,\bc)$. Since $w_{nt} \in L^s(\p\disk,\bc)$, it follows by Propositions \ref{everybchasplusandminus} and  \ref{prop: pminlpfuncinlp} that $w_{nt} = p^+w^+_{nt} + p^-w^-_{nt}$ and $w^\pm_{nt} \in L^s(\p \disk)$.  Hence, $(w^+_{nt})^* \in L^s(\p\disk)$. By Theorem \ref{thm: betterbvbetterleb}, since $(w^+_{nt})^*, w^-_{nt} \in L^s(\p \disk)$, it follows that $(w^+)^* \in H^s_{Bel,(\mu^+)^*}(\disk)$ and $w^- \in H^s_{Bel,\mu^-}(\disk)$. By Theorem \ref{thm: bcbelhardyrep}, since $(w^+)^* \in H^s_{Bel,(\mu^+)^*}(\disk)$ and $w^- \in H^s_{Bel,\mu^-}(\disk)$, it follows that $w \in H^s_{Bel,\mu}(\disk,\bc)$.

        Suppose $w_{nt} \equiv 0$ on $E$. So,
\[
   0 \equiv  w_{nt} = p^+ w^+_{nt} + p^- w^-_{nt} .
\]
Since $p^+$ and $p^-$ are linearly independent, it follows that 
\[
        w^+_{nt} \equiv 0 \quad\text{and}\quad w^-_{nt} \equiv 0
\]
on $E$. Since $|(w^+)_{nt}^*(z)| = |w^+_{nt}(z)|$, for all $z \in \p \disk$, it follows that 
\[
        (w^+)^*_{nt} \equiv 0
\]
on $E$. By Theorem \ref{thm: zeroposmesidentzero}, we have $(w^+)^* \equiv 0 \equiv w^-$. Therefore, $w \equiv 0$. 

    \end{proof}

\subsection{Higher-Order Iterated Beltrami Equation}

Next, we define a natural higher-order generalization of the Beltrami equation and analyze its solutions. This generalization is realized by iterating the differential operator associated with the first-order equation. Constructing a partial differential equation by iteration of an operator associated with a well-studied equation was previously considered in the setting of polyanalytic functions (see \cite{Balk} and many others) where the differential operator is the classic Cauchy-Riemann operator associated with holomorphic functions and the setting of Vekua equations in the complex setting in \cite{itvek, itvekbvp, Pascali, metahardy, WB3, WBD} and the bicomplex setting in \cite{BCHoiv}. Since this has not been thoroughly studied for Beltrami equations (even in the complex case), we consider it here and begin in the complex setting. 

\begin{deff}
     Let $n$ be a positive integer, $\mu \in W^{n-1,\infty}(\disk)$ such that $||\mu||_{W^{n-1,\infty}(\disk)}\leq c < 1$ for some positive real number $c$, and $w : \disk\to\C$. A \textit{higher-order iterated Beltrami equation} (abbreviated HOIB-equation) is any equation of the form
     \[
        \left( \frac{\p }{\p z^*} - \mu\frac{\p}{\p z} \right)^n w  = 0.
     \]
\end{deff}

The next representation theorem extends Theorem 2.2 of \cite{GuoJinDangDu} (see also Theorem 2.3 of \cite{GuoZhang}) where the same conclusion was proved for a restricted class of higher-order iterated Beltrami equations. Our proof uses the same argument that justifies representation formulas for polyanalytic functions in \cite{Balk} and solutions to higher-order iterated Vekua equation in \cite{WBD}. 

\begin{theorem}\label{thm: hoibimpliesbel}
    Let $n$ be a positive integer and $\mu \in W^{n-1,\infty}(\disk)$ such that $||\mu||_{W^{n-1,\infty}(\disk)}\leq c < 1$ for some positive real number $c$. Every solution $w : \disk \to\C$ of 
    \begin{equation}\label{higherbeltramiequation}
        \left( \frac{\p }{\p z^*} - \mu\frac{\p}{\p z} \right)^n w  = 0
    \end{equation}
    has the form 
    \begin{equation}\label{higherbeltramirep}
        w = \sum_{k=0}^{n-1}  (z^*)^k w_k,
    \end{equation}
    where $\frac{\p w_k}{\p  z^*} = \mu \frac{\p w_k}{\p z}$, for every $0 \leq k \leq n-1$. 
\end{theorem}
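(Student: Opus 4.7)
The plan is to prove this by induction on $n$, following the template used for polyanalytic functions in \cite{Balk} and iterated Vekua equations in \cite{WBD}. Write $L := \frac{\p}{\p z^*} - \mu \frac{\p}{\p z}$ so that \eqref{higherbeltramiequation} reads $L^n w = 0$. The base case $n=1$ is trivial: the representation reduces to $w = w_0$, which simply is the equation $L w_0 = 0$.

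The heart of the argument is the following computation, which I would carry out first. If $v: \disk\to\C$ satisfies $Lv = 0$, then for any nonnegative integer $k$ one has
\[
L\bigl((z^*)^k v\bigr)
= \frac{\p}{\p z^*}\bigl((z^*)^k v\bigr) - \mu \frac{\p}{\p z}\bigl((z^*)^k v\bigr)
= k(z^*)^{k-1} v + (z^*)^k L v
= k(z^*)^{k-1} v,
\]
since $\frac{\p (z^*)^k}{\p z} = 0$. This identity is what makes the factor $(z^*)^k$ in \eqref{higherbeltramirep} behave like a polynomial ``weight'' under $L$, exactly paralleling the role of $z^*$ for the standard $\p/\p z^*$ operator in the polyanalytic theory.

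For the inductive step I assume the statement for order $n-1$ (under the weaker regularity $\mu\in W^{n-2,\infty}(\disk)$, which is implied by the current hypothesis). Suppose $L^n w = 0$. Then $L^{n-1}(Lw) = 0$, so $Lw$ is a solution of a HOIB-equation of order $n-1$. By the inductive hypothesis there exist functions $v_0,\dots,v_{n-2}$ with $Lv_k = 0$ such that
\[
Lw = \sum_{k=0}^{n-2} (z^*)^k v_k.
\]
Define $w_{k+1} := v_k/(k+1)$ for $k=0,\dots,n-2$; these satisfy $L w_{k+1}=0$. By the key identity above,
\[
L\!\left(\sum_{k=1}^{n-1} (z^*)^k w_k\right)
= \sum_{k=1}^{n-1} k(z^*)^{k-1} w_k
= \sum_{k=0}^{n-2} (z^*)^k v_k
= L w.
\]
Setting $w_0 := w - \sum_{k=1}^{n-1} (z^*)^k w_k$ therefore gives $L w_0 = 0$, and
\[
w = \sum_{k=0}^{n-1} (z^*)^k w_k,
\]
which is the desired representation.

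I do not expect a serious obstacle: the argument is essentially algebraic once the commutation identity $L((z^*)^k v) = k (z^*)^{k-1} v$ is in hand. The only point requiring some care is the regularity bookkeeping, to be sure that $Lw$ has enough derivatives to invoke the inductive hypothesis with $\mu\in W^{n-2,\infty}(\disk)$; the assumption $\mu\in W^{n-1,\infty}(\disk)$ is exactly what is needed so that applying $L$ to a solution of $L^n w = 0$ produces a solution of $L^{n-1}$ in the same framework.
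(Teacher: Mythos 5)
Your proposal is correct and follows essentially the same route as the paper's proof: induction on the order, applying the inductive hypothesis to $Lw$, and then subtracting the antiderivative-in-$z^*$ combination $\sum_k \frac{1}{k+1}(z^*)^{k+1}w_k$ to isolate a solution of the first-order equation. Isolating the commutation identity $L((z^*)^k v)=k(z^*)^{k-1}v$ up front is a clean way to package the computation that the paper carries out inline, but the argument is the same.
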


\begin{proof}
    Functions of the form of \eqref{higherbeltramirep} are solutions of \eqref{higherbeltramiequation} by direct computation. 

    For the converse, the case $n = 1$ is clear. Now, suppose the representation holds for all $n$ such that $1 \leq n \leq m-1$. Let $w$ be any solution of 
    \[
        \left( \frac{\p }{\p z^*} - \mu\frac{\p}{\p z} \right)^n w  = 0
    \]
    Hence, 
    \[
        \gamma:= \left( \frac{\p }{\p z^*} - \mu\frac{\p}{\p z} \right) w
    \]
    is a solution of 
    \[
        \left( \frac{\p }{\p z^*} - \mu\frac{\p}{\p z} \right)^{n-1} \gamma = 0.
    \]
    By the induction hypothesis, there exist $\{w_k\}_{k=0}^{m-2}$ such that 
    \[
        \frac{\p w_k}{\p  z^*} = \mu \frac{\p w_k}{\p z},
    \]
    for every $0 \leq k \leq m-2$, and 
    \[
        \gamma = \sum_{k=0}^{m-2}  (z^*)^k w_k.
    \]
    Observe that 
    \begin{align*}
        &\left( \frac{\p }{\p z^*} - \mu\frac{\p }{\p z}\right)\left( w - \sum_{k=0}^{m-2} \frac{1}{k+1} (z^*)^{k+1} w_k \right) \\
        &= \sum_{k=0}^{m-2}  (z^*)^k w_k - \sum_{k=0}^{m-2} \left(  (z^*)^k w_k  + \frac{1}{k+1}  (z^*)^{k+1} \frac{\p w_k}{\p z^*} - \mu\frac{1}{k+1} (z^*)^{k+1} \frac{\p w_k}{\p z}\right)\\
        &= 0.
    \end{align*}
    Thus, 
    \[
        w = \sigma +  \sum_{k=0}^{m-2} \frac{1}{k+1} (z^*)^{k+1} w_k,
    \]
    where $\frac{\p \sigma}{\p  z^*} = \mu \frac{\p \sigma}{\p z}$. Therefore, letting $\tilde{w}_0 = \sigma$ and $\tilde{w}_{k} = w_{k+1}$, for $0 \leq k \leq m-2$, we have
    \[
        w = \sum_{k=0}^{m-1}  (z^*)^k \tilde{w}_k,
    \]
    where $\frac{\p \tilde{w}_k}{\p  z^*} = \mu \frac{\p \tilde{w}_k}{\p z}$, for every $0 \leq k \leq m-1$. 
\end{proof}

Now, we define Hardy classes of solutions to higher-order iterated Beltrami equations.

\begin{deff}
    For $p$ a positive real number, $n$ a positive integer, and $\mu \in W^{n-1,\infty}(\disk)$ such that $||\mu||_{W^{n-1,\infty}(\disk)}\leq c < 1$ for some positive real number $c$, we define the HOIB-Hardy classes $H^{n,p}_{Bel,\mu}(\disk)$ to be the collection of functions $w:\disk\to\C$ that solve
    \[
        \left( \frac{\p }{\p z^*} - \mu\frac{\p}{\p z} \right)^n w  = 0
    \]
    and satisfy
    \[
        \sum_{k=0}^{n-1} \sup_{0 < r< 1}\int_0^{2\pi} \left|\left(\frac{\p}{\p  z^*} - \mu\frac{\p}{\p z}\right)^k w(re^{i\theta})\right|^p\,d\theta < \infty.
    \]
\end{deff}

\begin{comm}
Note, for $\mu \equiv 0$, $H^{n,p}_{Bel,\mu}(\disk)$ reduces to the poly-Hardy classes of functions. See \cite{polyhardy}. 
\end{comm}

    The motivation for the definition above becomes clear with the realization that if $w = \sum_{k=0}^{n-1}  (z^*)^k w_k$, where each $w_k$ solves the classic first-order Beltrami equation, then we can represent each $w_k$ by 
    \begin{equation}\label{eqn: componentrep}
        w_k = \frac{1}{k!} \sum_{j=0}^{n-1-k}\frac{(-1)^j}{j!}  (z^*)^j \left(\frac{\p}{\p z^*} - \mu\frac{\p}{\p z}\right)^{k+j} w,
    \end{equation}
    for $0 \leq k \leq n-1$. A similar representation holds for the components of solutions to higher-order iterated Vekua equations (see \cite{itvek, WB3,WBD}). Following the argument from Theorem 4.1 of \cite{WBD}, we use this representation to show the following relationship between Hardy classes of solutions to first-order Beltrami equations and the associated Hardy classes of solutions to higher-order equations Beltrami equations.

\begin{theorem}\label{thm: higherBelHardyimpliescomponentsinBelHardy}
For $p$ a positive real number, $n$ a positive integer, and $\mu \in W^{n-1,\infty}(\disk)$ such that $||\mu||_{W^{n-1,\infty}(\disk)}\leq c < 1$ for some positive real number $c$, $w = \sum_{k=0}^{n-1}  (z^*)^k w_k \in H^{n,p}_{Bel,\mu}(\disk)$ if and only if $w_k \in H^p_{Bel,\mu}(\disk)$, for $0 \leq k \leq n-1$. 
\end{theorem}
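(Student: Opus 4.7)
The plan is to prove both implications by combining the algebraic identities relating $w$ and its iterated Beltrami-derivatives $D^m w$ (writing $D := \frac{\p}{\p z^*} - \mu \frac{\p}{\p z}$) to the component functions $w_k$, with the $L^p$-quasi-triangle inequality applied on each circle $|z| = r$. The key observation driving both directions is that $D w_k = 0$ and $D((z^*)^j) = j(z^*)^{j-1}$, which produces simple closed-form expressions on both sides.

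For the forward direction, I would begin from the representation formula \eqref{eqn: componentrep},
\[
w_k = \frac{1}{k!}\sum_{j=0}^{n-1-k}\frac{(-1)^j}{j!}(z^*)^j D^{k+j}w,
\]
which expresses each component $w_k$ as a finite linear combination of terms $(z^*)^j D^{k+j}w$. Since $|(z^*)^j| \leq 1$ on $\disk$, applying the $L^p$-triangle inequality (in its quasi-form when $0 < p < 1$) on the circle of radius $r$ yields
\[
\int_0^{2\pi}|w_k(re^{i\theta})|^p\,d\theta \leq C_{n,p,k}\sum_{j=0}^{n-1-k}\int_0^{2\pi}|D^{k+j}w(re^{i\theta})|^p\,d\theta.
\]
Taking the supremum in $r$ and using the definition of $H^{n,p}_{Bel,\mu}(\disk)$ shows the right-hand side is finite. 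Since Theorem \ref{thm: hoibimpliesbel} guarantees that each such $w_k$ solves the first-order Beltrami equation, we conclude $w_k \in H^p_{Bel,\mu}(\disk)$.

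For the reverse direction, starting from $w = \sum_{k=0}^{n-1}(z^*)^k w_k$ with $D w_k = 0$, a short induction on $m$ produces
\[
D^m w = \sum_{k=m}^{n-1}\frac{k!}{(k-m)!}(z^*)^{k-m}w_k
\]
for each $0 \leq m \leq n-1$. Bounding $|(z^*)^{k-m}| \leq 1$ on $\disk$ and invoking the $L^p$-quasi-triangle inequality on $|z| = r$ gives
\[
\int_0^{2\pi}|D^m w(re^{i\theta})|^p\,d\theta \leq C_{n,p,m}\sum_{k=m}^{n-1}\int_0^{2\pi}|w_k(re^{i\theta})|^p\,d\theta,
\]
and summing over $m$ yields the $H^{n,p}_{Bel,\mu}(\disk)$ norm condition. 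Combined with Theorem \ref{thm: hoibimpliesbel}, this gives $w \in H^{n,p}_{Bel,\mu}(\disk)$.

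The main obstacle is essentially cosmetic: for $0 < p < 1$ the constants $C_{n,p,k}$ and $C_{n,p,m}$ arising from the $L^p$-quasi-triangle inequality depend on $p$ and on the number of summands, but since all sums involve at most $n$ terms (independent of $r$), these constants remain finite. A secondary verification that \eqref{eqn: componentrep} really inverts the representation $w = \sum (z^*)^k w_k$ with $Dw_k = 0$ can be read off the inductive construction used in the proof of Theorem \ref{thm: hoibimpliesbel}, so no new machinery is required.
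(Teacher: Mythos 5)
Your proposal is correct and follows essentially the same route as the paper: both directions rest on the two algebraic identities $\left(\frac{\p}{\p z^*}-\mu\frac{\p}{\p z}\right)^m w=\sum_{k= m}^{n-1}\frac{k!}{(k-m)!}(z^*)^{k-m}w_k$ and the inversion formula \eqref{eqn: componentrep}, combined with $|z^*|\leq 1$ and the $L^p$ quasi-triangle inequality on each circle before taking the supremum in $r$. Your explicit appeal to Theorem \ref{thm: hoibimpliesbel} for the differential-equation part of membership is a small point of added care that the paper leaves implicit, but it is not a different argument.
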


\begin{proof}

First, suppose that $w = \sum_{k=0}^{n-1}(z^*)^k w_k$, and $w_k \in H^{p}_{Bel,\mu}(\disk)$. So, 
\[
     \left(\frac{\p }{\p z^*}- \mu\frac{\p}{\p z}\right)^k  w = \sum_{j = k}^{n-1} C_{k,j} (z^*)^{j-k} w_j,
\]
for every $0 \leq k \leq n-1$, where the $C_{k,j}$ are constants that depend only on $k$ and $j$. Observe that
\begin{align*}
    & \sup_{0 < r <1} \int_0^{2\pi} \left|  \left(\frac{\p }{\p z^*}- \mu\frac{\p}{\p z}\right)^k  w(re^{i\theta})  \right|^p \,d\theta \\
    &= \sup_{0 < r <1} \int_0^{2\pi} \left| \sum_{j = k}^{n-1} C_{k,j} (re^{-i\theta})^{j-k} w_j(re^{i\theta})  \right|^p \,d\theta \\
    &\leq C \sum_{j = k}^{n-1} \sup_{0 < r < 1 } \int_0^{2\pi} \left|w_j(re^{i\theta})  \right|^p \,d\theta < \infty,
\end{align*}
where $C$ is a constant that depends only on $n$ and $p$. Therefore, $w \in H_{Bel,\mu}^{n,p}(\disk)$.

Now, suppose that $w \in H^{n,p}_{Bel,\mu}(\disk)$. By \eqref{eqn: componentrep}, we have
\begin{align*}
    &\sup_{0 < r <1} \int_0^{2\pi} \left| w_k(re^{i\theta})  \right|^p \,d\theta \\
    &= \sup_{0 < r <1} \int_0^{2\pi} \left|\sum_{j=0}^{n-1-k}C_{k,j}(re^{-i\theta})^j\, \left(\frac{\p}{\p z^*}-\mu\frac{\p}{\p z}\right)^{k+j}w(re^{i\theta})  \right|^p \,d\theta \\
    &\leq C \sum_{j=0}^{n-1-k}\sup_{0 < r < 1} \int_0^{2\pi} \left| \left(\frac{\p}{\p z^*}-\mu\frac{\p}{\p z}\right)^{k+j}w(re^{i\theta})  \right|^p \,d\theta < \infty,
\end{align*}
 where the $C_{k,j}$ are constants that depend on only $k$ and $j$ and $C$ is a constant that depends on only $n$ and $p$, for any $0 \leq k \leq n-1$. Thus, $w_k \in H^p_{Bel,\mu}(\disk)$.

\end{proof}

\begin{theorem}For $p$ a positive real number, $n$ a positive integer, and $\mu \in W^{n-1,\infty}(\disk)$ such that $||\mu||_{W^{n-1,\infty}(\disk)}\leq c < 1$ for some positive real number $c$, every $w \in H^{n,p}_{Bel,\mu}(\disk)$ has a nontangential boundary value $w_{nt} \in L^p(\p \disk)$ and 
\[
    \lim_{r\nearrow 1} \int_0^{2\pi} |w(re^{i\theta})- w_{nt}(e^{i\theta})|^p\,d\theta = 0.
\]

\end{theorem}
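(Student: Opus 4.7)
The plan is to reduce everything to the first-order boundary behavior established in Theorem \ref{thm: belbvcon} via the representation $w = \sum_{k=0}^{n-1} (z^*)^k w_k$ from Theorem \ref{thm: hoibimpliesbel}. Given $w \in H^{n,p}_{Bel,\mu}(\disk)$, Theorem \ref{thm: higherBelHardyimpliescomponentsinBelHardy} guarantees that each component $w_k$ lies in the first-order class $H^p_{Bel,\mu}(\disk)$, so by Theorem \ref{thm: belbvcon} each $w_k$ has a nontangential boundary value $(w_k)_{nt} \in L^p(\p \disk)$ satisfying
\[
\lim_{r \nearrow 1} \int_0^{2\pi} |w_k(re^{i\theta}) - (w_k)_{nt}(e^{i\theta})|^p \, d\theta = 0.
\]

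I would then define the candidate boundary value by
\[
w_{nt}(e^{i\theta}) := \sum_{k=0}^{n-1} e^{-ik\theta}(w_k)_{nt}(e^{i\theta}).
\]
Since each factor $e^{-ik\theta}$ is continuous on $\p\disk$ and each $(w_k)_{nt}$ is the nontangential limit of $w_k$, the function $w_{nt}$ is the nontangential limit of $w$. Applying Minkowski's inequality to the finite sum shows $w_{nt} \in L^p(\p\disk)$.

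For the $L^p$ convergence on circles, I would add and subtract $\sum_{k=0}^{n-1} r^k e^{-ik\theta}(w_k)_{nt}(e^{i\theta})$ to decompose
\[
w(re^{i\theta}) - w_{nt}(e^{i\theta}) = \sum_{k=0}^{n-1} r^k e^{-ik\theta}\left[w_k(re^{i\theta}) - (w_k)_{nt}(e^{i\theta})\right] + \sum_{k=0}^{n-1} (r^k - 1) e^{-ik\theta} (w_k)_{nt}(e^{i\theta}).
\]
After taking $p$-th powers of absolute values and integrating over $[0,2\pi]$, the first sum is controlled by $C_{n,p}\sum_k \int_0^{2\pi} |w_k(re^{i\theta}) - (w_k)_{nt}(e^{i\theta})|^p\,d\theta$ (since $r^k \leq 1$), which tends to zero by the first-order convergence above. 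The second sum is bounded by $C_{n,p}\sum_k |r^k - 1|^p \int_0^{2\pi} |(w_k)_{nt}(e^{i\theta})|^p\,d\theta$, which tends to zero because each coefficient $|r^k - 1|^p \to 0$ and each $(w_k)_{nt}$ lies in $L^p(\p\disk)$. There is no substantive obstacle here beyond careful bookkeeping: the analytic work is front-loaded into the representation theorem and the first-order boundary behavior, so this result assembles cleanly from those ingredients via the triangle inequality.
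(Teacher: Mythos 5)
Your proposal is correct and follows essentially the same route as the paper: reduce to the components via Theorem \ref{thm: higherBelHardyimpliescomponentsinBelHardy}, invoke the first-order boundary behavior of Theorem \ref{thm: belbvcon}, and assemble the convergence with an add-and-subtract plus the (quasi-)triangle inequality. The only cosmetic difference is that you insert $\sum_k r^k e^{-ik\theta}(w_k)_{nt}$ as the intermediate term, so your error term $|r^k-1|^p\int|(w_k)_{nt}|^p\,d\theta$ vanishes for free, whereas the paper inserts $\sum_k e^{-ik\theta}w_k(re^{i\theta})$ and appeals to dominated convergence for the term $|r^k-1|^p\int|w_k(re^{i\theta})|^p\,d\theta$; both are immediate.
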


\begin{proof}

    By Theorem \ref{thm: higherBelHardyimpliescomponentsinBelHardy}, every $w \in H^{n,p}_{Bel,\mu}(\disk)$ is representable as 
    \[
        w = \sum_{k=0}^{n-1} (z^*)^k w_k,
    \]
    where $w_k \in H^p_{Bel,\mu}(\disk)$, for all $k$. By Theorem \ref{thm: belbvcon}, every $w_k$ has a nontangential boundary value $w_{k,nt} \in L^p(\p \disk)$ and 
    \[
        \lim_{r\nearrow 1} \int_0^{2\pi} |w_k(re^{i\theta})- w_{k,nt}(e^{i\theta})|^p\,d\theta = 0.
    \]
    Since $w = \sum_{k=0}^{n-1} (z^*)^k w_k$, it follows that $w_{nt} = \sum_{k=0}^{n-1} (z^*)^k w_{k,nt}$ exists and is in $L^p(\p\disk)$. Now, for $r \in (0,1)$, observe that 
    \begin{align*}
        &  \int_0^{2\pi} |w(re^{i\theta})- w_{nt}(e^{i\theta})|^p\,d\theta\\
        &=  \int_0^{2\pi} \left|\sum_{k=0}^{n-1} r^k e^{-ik\theta} w_k(re^{i\theta})- \sum_{k=0}^{n-1} e^{-ik\theta} w_{k,nt}(e^{i\theta}) \right|^p\,d\theta\\
        &\leq C_{p,n} \sum_{k=0}^{n-1} \int_0^{2\pi} |r^k w_k(re^{i\theta}) - w_{k,nt}(e^{i\theta})|^p \,d\theta\\
        &= C_{p,n} \sum_{k=0}^{n-1} \int_0^{2\pi} |r^k w_k(re^{i\theta}) - w_k(re^{i\theta}) + w_k(re^{i\theta})- w_{k,nt}(e^{i\theta})|^p \,d\theta\\
        &\leq C_{p,n} \sum_{k=0}^{n-1} \left( \int_0^{2\pi} |r^k w_k(re^{i\theta}) - w_k(re^{i\theta})|^p \,d\theta+ \int_0^{2\pi} |w_k(re^{i\theta})- w_{k,nt}(e^{i\theta})|^p \,d\theta \right),
    \end{align*}
    where $C_{p,n}$ is a constant that depends only on $p$ and $n$ and is not necessarily the same from line to line. Therefore, by Lebesgue's Dominated Convergence Theorem, 
    \begin{align*}
       & \lim_{r\nearrow 1} \int_0^{2\pi} |w(re^{i\theta})- w_{nt}(e^{i\theta})|^p\,d\theta \\
       & \leq C_{p,n} \sum_{k=0}^{n-1} \left( \lim_{r\nearrow 1} \int_0^{2\pi} |r^k-1|^p \,|w_k(re^{i\theta}) |^p \,d\theta+ \lim_{r\nearrow 1}\int_0^{2\pi} |w_k(re^{i\theta})- w_{k,nt}(e^{i\theta})|^p \,d\theta \right) \\
       &= 0.
    \end{align*}

\end{proof}

Now, we extend the preceding ideas into the setting of bicomplex numbers. Solutions of differential equations constructed by iteration of the differential operator were previously considered in the bicomplex setting in \cite{BCAtomic, BCHoiv}.

\begin{deff}
Let $n$ be a positive integer, $\mu \in W^{n-1,\infty}(\disk,\bc)$ such that $||\mu||_{W^{n-1,\infty}(\disk,\bc)}\leq c < 1$ for some positive real number $c$, and $w : \disk\to\bc$. A \textit{bicomplex higher-order iterated Beltrami equation} (abbreviated $\bc$-HOIB-equation) is any equation of the form
     \[
        \left( \dbar - \mu\p \right)^n w  = 0.
     \]
\end{deff}

The next representation extends Theorem \ref{thm: hoibimpliesbel} from the complex setting (that also holds for solutions of higher-order iterated Vekua-type equations \cite{WBD, BCHoiv}) that allows us to represent the solution of the higher-order equation in terms of a finite sum of solutions to the first-order equation. With this representation, we prove properties of the solutions to the first-order equation extend to solutions of the higher-order equation.

\begin{theorem}\label{thm: hoibrep}
 Let $n$ be a positive integer and $\mu \in W^{n-1,\infty}(\disk,\bc)$ such that $||\mu||_{W^{n-1,\infty}(\disk,\bc)}\leq c < 1$ for some positive real number $c$. Every solution $w : \disk \to\bc$ of 
    \begin{equation}\label{bchigherbeltramiequation}
        \left( \dbar - \mu\p \right)^n w  = 0
    \end{equation}
    has the form 
    \begin{equation}\label{bchigherbeltramiequationrepresentation}
        w = \sum_{k=0}^{n-1} \widehat{ (z^*)}^k w_k,
    \end{equation}
    where $\dbar w_k = \mu \p w_k$, for every $k$. 
\end{theorem}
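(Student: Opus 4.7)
The plan is to mirror the inductive argument from Theorem \ref{thm: hoibimpliesbel} (the complex case), with the key observation that the bicomplexification $\widehat{z^*}$ plays the role that $z^*$ plays in the complex setting under the bicomplex operators. Specifically, writing $\widehat{z^*} = x - jy$ and using Definition \ref{bcdbardef}, I would first compute
\[
\dbar \widehat{z^*} = \tfrac{1}{2}\left(\tfrac{\p}{\p x} + j\tfrac{\p}{\p y}\right)(x - jy) = \tfrac{1}{2}(1 + 1) = 1, \qquad \p \widehat{z^*} = \tfrac{1}{2}(1 - 1) = 0,
\]
so, since $\bc$ is commutative, the product/Leibniz rule gives
\[
(\dbar - \mu \p)\bigl(\widehat{z^*}^{\,k+1} w_k\bigr) = (k+1)\widehat{z^*}^{\,k} w_k + \widehat{z^*}^{\,k+1}(\dbar w_k - \mu \p w_k).
\]

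For the ``if'' direction, a direct computation using the identity above shows that any $w$ of the form \eqref{bchigherbeltramiequationrepresentation} with each $w_k$ solving $\dbar w_k = \mu \p w_k$ is annihilated by $(\dbar - \mu\p)^n$, since each term $\widehat{z^*}^{\,k}w_k$ is mapped by $(\dbar - \mu \p)$ to a $\bc$-Beltrami solution of one lower degree in $\widehat{z^*}$.

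For the ``only if'' direction, I would argue by induction on $n$. The base case $n=1$ is immediate. Assuming the representation holds at level $m-1$, let $w$ solve $(\dbar - \mu\p)^m w = 0$ and set $\gamma := (\dbar - \mu\p)w$, which satisfies the $(m-1)$-th order equation. By the inductive hypothesis, $\gamma = \sum_{k=0}^{m-2} \widehat{z^*}^{\,k} w_k$ with each $w_k$ a $\bc$-Beltrami solution. Using the Leibniz computation above, I would then verify
\[
(\dbar - \mu\p)\!\left(w - \sum_{k=0}^{m-2} \tfrac{1}{k+1}\,\widehat{z^*}^{\,k+1} w_k\right) = \gamma - \sum_{k=0}^{m-2} \widehat{z^*}^{\,k} w_k = 0,
\]
so $\sigma := w - \sum_{k=0}^{m-2} \tfrac{1}{k+1}\,\widehat{z^*}^{\,k+1} w_k$ itself solves $\dbar \sigma = \mu \p \sigma$. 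Relabeling $\tilde w_0 = \sigma$ and $\tilde w_{k+1} = \tfrac{1}{k+1} w_k$ yields the desired form $w = \sum_{k=0}^{m-1} \widehat{z^*}^{\,k} \tilde w_k$.

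The only point requiring care is the Leibniz rule, because in the bicomplex setting one must confirm that the scalar factors $\widehat{z^*}^{\,k}$ and the bicomplex coefficient $\mu$ commute appropriately with the derivatives. This is fine since $\bc$ is a commutative ring and differentiation is $\R$-linear; hence the same formal manipulations as in the complex case go through verbatim. I do not expect any serious obstacle beyond bookkeeping.
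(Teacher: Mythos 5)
Your proposal is correct and follows essentially the same inductive argument as the paper's proof: both directions, the correction term $\sum_{k=0}^{m-2}\tfrac{1}{k+1}\widehat{(z^*)}^{\,k+1}w_k$, and the relabeling step all match. Your explicit verification that $\dbar\,\widehat{z^*}=1$ and $\p\,\widehat{z^*}=0$ is a welcome addition that the paper leaves implicit under ``by direct computation.''
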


\begin{proof}
Observe that any function of the form $w = \sum_{k=0}^{n-1} \widehat{(z^*)}^k w_k$, where $\dbar w_k = \mu \p w_k$ for every $k$, is a solution of \eqref{bchigherbeltramiequation} by direct computation. 

For the other direction, we proceed by induction. The $n =1$ case is trivial. Suppose that for every $0 \leq n \leq m-1$, if $w$ is a solution of $(\dbar - \mu\p)^n w = 0$, then $w = \sum_{k=0}^{n-1} \widehat{(z^*)}^k w_k$, where $\dbar w_k = \mu \p w_k$ for every $k$. Let $f$ be a solution of $(\dbar - \mu \p)^m f = 0$. So, $ g = (\dbar - \mu \p)f$ solves $(\dbar - \mu\p)^{m-1}g = 0$ and, by the induction hypothesis, must have the form
\[
    (\dbar - \mu \p)f = \sum_{k=0}^{m-2
    }\widehat{(z^*)}^kf_k,
\]
where $\dbar f_k = \mu \p f_k$, for every $k$. Observe that 
\begin{align*}
    & (\dbar - \mu \p)\left( f - \sum_{k=0}^{m-2}\frac{1}{k+1} \widehat{(z^*)}^{k+1} f_k\right)\\
    &= \sum_{k=0}^{m-2
    }\widehat{(z^*)}^kf_k - (\dbar - \mu \p)\sum_{k=0}^{m-2}\frac{1}{k+1} \widehat{(z^*)}^{k+1} f_k \\
    &= \sum_{k=0}^{m-2
    }\widehat{(z^*)}^kf_k - \sum_{k=0}^{m-2
    }\widehat{(z^*)}^kf_k  = 0.
\end{align*}
Hence, 
\[
    f - \sum_{k=0}^{m-2}\frac{1}{k+1} \widehat{(z^*)}^{k+1} f_k = \phi,
\]
where $\dbar \phi = \mu \p \phi$. Thus, 
\[
     f = \sum_{k=0}^{m-1} \widehat{(z^*)}^k \tilde{f_k}, 
\]
where $\tilde{f}_{k+1} := \frac{1}{k+1}f_{k}$, $0 \leq k \leq m-2$ and $\tilde{f_0} = \phi$. Therefore, the representation \eqref{bchigherbeltramiequationrepresentation} holds for all orders. 

\end{proof}

\begin{comm}
    Recall that solutions of \eqref{higherbeltramiequation} are representable as \eqref{higherbeltramirep}, and that the components functions of that representation can be represented by \eqref{eqn: componentrep}. A similar situation is present for solutions of 
    \[
        \left( \dbar - \mu\p \right)^n w  = 0
    \]
    with representation 
    \[
        w = \sum_{k=0}^{n-1} \widehat{ (z^*)}^k w_k.
    \]
    The component functions are representable by the formula
    \begin{equation}\label{eqn: bccomponentrep}
         w_k = \frac{1}{k!} \sum_{j=0}^{n-1-k}\frac{(-1)^j}{j!}  \widehat{(z^*)}^j \left(\dbar - \mu\p\right)^{k+j} w.
    \end{equation}
\end{comm}

Now, we define Hardy classes of solutions to bicomplex higher-order iterated Beltrami equations.

\begin{deff}
    For $p$ a positive real number, $n$ a positive integer, and $\mu \in W^{n-1,\infty}(\disk,\bc)$ such that $||\mu||_{W^{n-1,\infty}(\disk,\bc)}\leq c < 1$ for some positive real number $c$, we define the $\bc$-HOIB-Hardy classes $H^{n,p}_{Bel,\mu}(\disk,\bc)$ to be the collection of functions $w:\disk\to\bc$ that solve
    \[
     \left( \dbar - \mu\p \right)^n w  = 0
     \]
    and satisfy
    \[
        \sum_{k=0}^{n-1} \sup_{0 < r< 1}\int_0^{2\pi} \left|\left|\left( \dbar - \mu\p \right)^k w(re^{i\theta})\right|\right|_\bc^p\,d\theta < \infty.
    \]
\end{deff}

With the definition above, we immediately extend Theorem \ref{thm: higherBelHardyimpliescomponentsinBelHardy} to $H^{n,p}_{Bel,\mu}(\disk,\bc)$ in the following way.

\begin{theorem}\label{thm: bchoibinclusion}
For $p$ a positive real number, $n$ a positive integer, and $\mu \in W^{n-1,\infty}(\disk,\bc)$ such that $||\mu||_{W^{n-1,\infty}(\disk,\bc)}\leq c < 1$ for some positive real number $c$, $w = \sum_{k=0}^{n-1} \widehat{ (z^*)}^k w_k \in H^{n,p}_{Bel,\mu}(\disk,\bc)$ if and only if $w_k \in H^p_{Bel,\mu}(\disk,\bc)$, for $0 \leq k \leq n-1$. 
\end{theorem}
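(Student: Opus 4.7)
The plan is to mirror the argument of Theorem \ref{thm: higherBelHardyimpliescomponentsinBelHardy} in the bicomplex setting, using Theorem \ref{thm: hoibrep} for the representation of solutions and the explicit component recovery formula \eqref{eqn: bccomponentrep}. The two ingredients I will lean on throughout are the submultiplicative-type estimate \eqref{stareqn} for the $\bc$-norm and the elementary observation that for $z = r e^{i\theta} \in \disk$ one has $||\widehat{(z^*)}||_{\bc} = r \leq 1$, which follows immediately from computing the idempotent components $(\widehat{(z^*)})^{\pm}$.

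For the implication ``$w_k \in \Hp[Bel,\mu](\disk,\bc)$ for all $k$ implies $w \in H^{n,p}_{Bel,\mu}(\disk,\bc)$'', I would first apply $(\dbar - \mu\p)^k$ to the representation $w = \sum_{j=0}^{n-1} \widehat{(z^*)}^j w_j$. Since $\dbar \widehat{(z^*)} = 1$, $\p \widehat{(z^*)} = 0$, and each $w_j$ satisfies $\dbar w_j = \mu \p w_j$, a direct induction in $k$ yields
\[
    (\dbar - \mu\p)^k w = \sum_{j=k}^{n-1} C_{k,j}\, \widehat{(z^*)}^{j-k}\, w_j
\]
for constants $C_{k,j}$ depending only on $k$ and $j$. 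Applying \eqref{stareqn}, the bound $||\widehat{(z^*)}||_\bc \leq 1$ on $\disk$, and the elementary inequality $(\sum_j a_j)^p \leq C_{n,p} \sum_j a_j^p$, I estimate $\int_0^{2\pi} ||(\dbar - \mu\p)^k w(re^{i\theta})||_\bc^p\,d\theta$ by a finite sum of the quantities $\sup_{0<r<1}\int_0^{2\pi}||w_j(re^{i\theta})||_\bc^p\,d\theta$, each of which is finite by hypothesis. Summing on $k$ yields the required $\bc$-HOIB-Hardy norm bound.

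For the converse, I would exploit the representation \eqref{eqn: bccomponentrep}: for $0 \leq k \leq n-1$,
\[
    w_k = \frac{1}{k!}\sum_{j=0}^{n-1-k} \frac{(-1)^j}{j!}\, \widehat{(z^*)}^j\, (\dbar - \mu\p)^{k+j} w.
\]
Again by \eqref{stareqn}, $||\widehat{(z^*)}||_\bc \leq 1$ on $\disk$, and the same elementary $p$-th-power-of-a-sum inequality, I bound $\int_0^{2\pi}||w_k(re^{i\theta})||_\bc^p\,d\theta$ by a fixed constant (depending only on $n$, $p$, $k$) times $\sum_{j=0}^{n-1-k}\sup_{0<r<1}\int_0^{2\pi}||(\dbar - \mu\p)^{k+j} w(re^{i\theta})||_\bc^p\,d\theta$, which is finite since $w \in H^{n,p}_{Bel,\mu}(\disk,\bc)$. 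It remains to note that each $w_k$ does satisfy the first-order $\bc$-Beltrami equation, which follows from the uniqueness of the representation in Theorem \ref{thm: hoibrep}.

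The only mildly delicate point, and the place where the bicomplex setting differs from the complex case of Theorem \ref{thm: higherBelHardyimpliescomponentsinBelHardy}, is the repeated use of \eqref{stareqn}, which introduces factors of $\sqrt{2}$ rather than equality when breaking $\bc$-norms of products; these, however, are harmless constants that can be absorbed into a single constant $C_{n,p}$. No new conceptual difficulty arises beyond what was already handled in the proof of Theorem \ref{thm: bcbelhardyrep}, so I do not expect a genuine obstacle.
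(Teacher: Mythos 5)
Your proposal is correct and follows essentially the same route as the paper's proof: both directions rest on applying $(\dbar-\mu\p)^k$ to the representation from Theorem \ref{thm: hoibrep} to get $(\dbar-\mu\p)^k w=\sum_{j=k}^{n-1}C_{k,j}\widehat{(z^*)}^{j-k}w_j$, using the component recovery formula \eqref{eqn: bccomponentrep} for the converse, and estimating via the boundedness of $\widehat{(z^*)}$ on $\disk$ together with the elementary $p$-th-power-of-a-sum inequality. The only cosmetic difference is your explicit invocation of \eqref{stareqn} (in fact, since multiplication acts componentwise in the idempotent representation, $||\widehat{(z^*)}^m v||_\bc = r^m||v||_\bc$ exactly, so even the $\sqrt{2}$ factors are avoidable), and your explicit remark that each $w_k$ solves the first-order equation, which the paper leaves implicit.
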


\begin{proof}

First, suppose that $w = \sum_{k=0}^{n-1}\widehat{(z^*)}^k w_k$, and $w_k \in H^{p}_{Bel,\mu}(\disk,\bc)$. So, 
\[
     \left(\dbar- \mu\p\right)^k  w = \sum_{j = k}^{n-1} C_{k,j} \widehat{(z^*)}^{j-k} w_j,
\]
for every $0 \leq k \leq n-1$, where the $C_{k,j}$ are constants that depend only on $k$ and $j$. Observe that, for each $0 \leq k \leq n-1$, 
\begin{align*}
    & \sup_{0 < r <1} \int_0^{2\pi} \left||  (\dbar - \mu \p)^k  w(re^{i\theta})  \right||_{\bc}^p \,d\theta \\
    &= \sup_{0 < r <1} \int_0^{2\pi} \left|\left| \sum_{j = k}^{n-1} C_{k,j} \widehat{(re^{-i\theta})}^{j-k} w_j(re^{i\theta})  \right|\right|_\bc^p \,d\theta \\
    &\leq C \sum_{j = k}^{n-1} \sup_{0 < r < 1 } \int_0^{2\pi} \left|\left|w_j(re^{i\theta})  \right|\right|^p \,d\theta < \infty,
\end{align*}
where $C$ is a constant that depends only on $n$ and $p$. Therefore, $w \in H_{Bel,\mu}^{n,p}(\disk)$.

Now, suppose that $w \in H^{n,p}_{Bel,\mu}(\disk)$. By \eqref{eqn: bccomponentrep}, we have
\begin{align*}
    &\sup_{0 < r <1} \int_0^{2\pi} \left|\left| w_k(re^{i\theta})  \right|\right|_\bc^p \,d\theta \\
    &= \sup_{0 < r <1} \int_0^{2\pi} \left|\left|\sum_{j=0}^{n-1-k}C_{k,j} \widehat{(re^{-i\theta})}^j\, \left(\dbar - \mu \p\right)^{k+j}w(re^{i\theta})  \right|\right|_\bc^p \,d\theta \\
    &\leq C \sum_{j=0}^{n-1-k}\sup_{0 < r < 1} \int_0^{2\pi} \left|\left| \left(\dbar - \mu \p\right)^{k+j}w(re^{i\theta})  \right|\right|_\bc^p \,d\theta < \infty,
\end{align*}
 where the $C_{k,j}$ are constants that depend on only $k$ and $j$ and $C$ is a constant that depends on only $n$ and $p$, for any $0 \leq k \leq n-1$. Thus, $w_k \in H^p_{Bel,\mu}(\disk)$.

\end{proof}

\begin{corr}\label{corr: bcbelhpcompinhp}
    For $p$ a positive real number, $n$ a positive integer, and $\mu \in W^{n-1,\infty}(\disk,\bc)$ such that $||\mu||_{W^{n-1,\infty}(\disk,\bc)}\leq c < 1$ for some positive real number $c$, $w = \sum_{k=0}^{n-1} \widehat{ (z^*)}^k w_k \in H^{n,p}_{Bel,\mu}(\disk,\bc)$ if and only if $(w_k^+)^* \in H^p_{Bel,(\mu^+)^*}(\disk)$ and $w^-_k \in H^p_{Bel,\mu^-}(\disk)$, for $0 \leq k \leq n-1$. 
\end{corr}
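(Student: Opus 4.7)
The plan is to chain together the two representation/characterization theorems already established for this section, since the statement is precisely the composition of two equivalences just proved.

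First, I would invoke Theorem \ref{thm: bchoibinclusion}, which gives the equivalence $w = \sum_{k=0}^{n-1}\widehat{(z^*)}^k w_k \in H^{n,p}_{Bel,\mu}(\disk,\bc)$ if and only if each component $w_k$ lies in the first-order $\bc$-Beltrami-Hardy space $H^p_{Bel,\mu}(\disk,\bc)$. This step handles the reduction from the higher-order iterated equation to the first-order equation on the bicomplex side, and it is immediate from the statement of Theorem \ref{thm: bchoibinclusion}.

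Next, I would apply Theorem \ref{thm: bcbelhardyrep} componentwise: for each fixed $k$, the function $w_k \in H^p_{Bel,\mu}(\disk,\bc)$ if and only if $(w_k^+)^* \in H^p_{Bel,(\mu^+)^*}(\disk)$ and $w_k^- \in H^p_{Bel,\mu^-}(\disk)$, where $w_k = p^+ w_k^+ + p^- w_k^-$ is the idempotent representation of $w_k$. Composing the two equivalences yields the desired biconditional statement of the corollary.

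There is no real obstacle here; the corollary is essentially definitional once the two prior theorems are in hand. The only thing worth flagging is that the hypothesis $\mu \in W^{n-1,\infty}(\disk,\bc)$ with $\|\mu\|_{W^{n-1,\infty}(\disk,\bc)} \leq c < 1$ is strong enough to imply both $\mu \in L^\infty(\disk,\bc)$ with $\|\mu\|_{L^\infty(\disk,\bc)} \leq c < 1$ (needed for Theorem \ref{thm: bcbelhardyrep}) and that the idempotent components $(\mu^+)^*$ and $\mu^-$ satisfy the corresponding $L^\infty$ bound on the complex side, so applying the cited results to each of the $2n$ scalar conditions is legitimate. The proof therefore consists of a single sentence invoking Theorem \ref{thm: bchoibinclusion} followed by an application of Theorem \ref{thm: bcbelhardyrep} to each $w_k$.
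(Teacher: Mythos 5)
Your proposal is correct and is exactly the argument the paper intends: the corollary is stated without proof precisely because it is the composition of Theorem \ref{thm: bchoibinclusion} (reduction of the higher-order bicomplex Hardy class to the first-order components $w_k$) with Theorem \ref{thm: bcbelhardyrep} applied to each $w_k$. Your remark about the hypotheses on $\mu$ descending to the required $L^\infty$ bounds on the idempotent components is a reasonable point to make explicit, but it is not a gap.
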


\begin{corr}
    For $p$ a positive real number, $n$ a positive integer, and $\mu \in W^{n-1,\infty}(\disk,\bc)$ such that $||\mu||_{W^{n-1,\infty}(\disk,\bc)}\leq c < 1$ for some positive real number $c$, every $w \in H^{n,p}_{Bel,\mu}(\disk,\bc)$ is an element of $L^m(\disk,\bc)$, for every $0 < m < 2p$. 
\end{corr}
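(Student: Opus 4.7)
The plan is to reduce the higher-order bicomplex statement to the first-order complex statement by combining the component decomposition from Corollary \ref{corr: bcbelhpcompinhp} with Theorem \ref{thm: belhpinbetterlp} (which gives the corresponding $L^m$ inclusion for complex Beltrami-Hardy functions) and Proposition \ref{prop: pminlpfuncinlp} (which lets us pass between $L^m(\disk,\bc)$ and scalar $L^m(\disk)$ statements on the idempotent components).

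Given $w \in H^{n,p}_{Bel,\mu}(\disk,\bc)$, I would first apply Theorem \ref{thm: hoibrep} to write $w = \sum_{k=0}^{n-1}\widehat{(z^*)}^k w_k$ and then apply Corollary \ref{corr: bcbelhpcompinhp} to conclude that, for every $0 \leq k \leq n-1$, the components satisfy $(w_k^+)^* \in H^p_{Bel,(\mu^+)^*}(\disk)$ and $w_k^- \in H^p_{Bel,\mu^-}(\disk)$. Since $\mu \in W^{n-1,\infty}(\disk,\bc)$ (so in particular $\mu^\pm \in W^{1,s}(\disk)$ for every $s > 2$ once $n \geq 2$, and for $n=1$ the statement reduces to Theorem \ref{thm: bcbelhpinbetterlp}), Theorem \ref{thm: belhpinbetterlp} gives $(w_k^+)^*, w_k^- \in L^m(\disk)$ for every $0 < m < 2p$. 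Taking complex conjugates on the plus components and then invoking Proposition \ref{prop: pminlpfuncinlp}, each $w_k$ lies in $L^m(\disk,\bc)$ for $0 < m < 2p$.

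It remains to see that multiplying by $\widehat{(z^*)}^k$ and summing preserves the $L^m(\disk,\bc)$ membership. Using Remark after Proposition \ref{everybchasplusandminus} and a direct calculation, $\widehat{z^*} = p^+ z + p^- z^*$, so $\widehat{(z^*)}^k w_k = p^+ z^k w_k^+ + p^- (z^*)^k w_k^-$ has idempotent components bounded pointwise by $|w_k^\pm|$ on $\disk$. Combining this with \eqref{bcbasicestimates} yields a pointwise bound of the form
\[
    \|w(z)\|_\bc \leq C_n \sum_{k=0}^{n-1} \|w_k(z)\|_\bc,
\]
from which the desired $L^m(\disk,\bc)$ membership follows by the triangle (or quasi-triangle) inequality applied to the $L^m$-norms of the $w_k$. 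There is no substantive obstacle here; the work is all contained in Corollary \ref{corr: bcbelhpcompinhp}, and this statement is a clean corollary of that representation together with the known complex-valued inclusion result.
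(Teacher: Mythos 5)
Your proof is correct and follows essentially the same route as the paper's: decompose via Corollary \ref{corr: bcbelhpcompinhp}, apply Theorem \ref{thm: belhpinbetterlp} to the idempotent components, pass back with Proposition \ref{prop: pminlpfuncinlp}, and control the sum $\sum_k \widehat{(z^*)}^k w_k$ in $L^m(\disk,\bc)$ via \eqref{bcbasicestimates}; your explicit pointwise bound on $\widehat{(z^*)}^k w_k$ simply makes precise what the paper absorbs into its constant $C$. (Your parenthetical about regularity is well taken: for $n=1$ the hypothesis $\mu\in W^{0,\infty}=L^\infty$ does not supply the $W^{1,s}$, $s>2$, smoothness that Theorem \ref{thm: belhpinbetterlp} requires, but this mismatch is present in the paper's own statement and proof as well.)
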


\begin{proof}
    By Corollary \ref{corr: bcbelhpcompinhp}, if $w \in H^{n,p}_{Bel,\mu}(\disk,\bc)$, then $w = \sum_{k=0}^{n-1} \widehat{ (z^*)}^k w_k$ and $(w_k^+)^* \in H^p_{Bel,(\mu^+)^*}(\disk)$ and $w^-_k \in H^p_{Bel,\mu^-}(\disk)$, for $0 \leq k \leq n-1$. By Theorem \ref{thm: belhpinbetterlp}, $(w^+_k)^*, w^-_k \in L^m(\disk)$, for $0 < m < 2p$. By Proposition \ref{prop: pminlpfuncinlp}, $w_k \in L^m(\disk, \bc)$, for $0 < m < 2p$ and every 
    $k$. Thus, for any $0 < m < 2p$, we have 
    \begin{align*}
        \iint_{\disk} ||w(z)||_\bc^m \,dx\,dy
        &= \iint_{\disk} \left|\left|\sum_{k=0}^{n-1} \widehat{ (z^*)}^k w_k(z) \right|\right|_\bc^m \,dx\,dy\\
        &\leq C \sum_{k=0}^{n-1} \iint_{\disk} ||w_k(z)||_\bc^m\,dx\,dy < \infty,
    \end{align*}
    where $C$ is a constant that depends only on $n$ and $m$, by \eqref{bcbasicestimates}. Therefore, $w \in L^m(\disk,\bc)$.
\end{proof}

We also recover the classic Hardy space boundary behavior.

\begin{theorem}\label{thm: bcbelbvcon}
For $p$ a positive real number, $n$ a positive integer, and $\mu \in W^{n-1,\infty}(\disk,\bc)$ such that $||\mu||_{W^{n-1,\infty}(\disk,\bc)}\leq c < 1$, for some positive real number $c$, every $w \in H^{n,p}_{Bel,\mu}(\disk,\bc)$ has a nontangential boundary value $w_{nt} \in L^p(\p \disk,\bc)$ and 
\[
    \lim_{r\nearrow 1} \int_0^{2\pi} ||w(re^{i\theta})- w_{nt}(e^{i\theta})||_{\bc}^p\,d\theta = 0.
\]

\end{theorem}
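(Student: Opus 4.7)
The plan is to reduce this higher-order statement to the first-order case via the representation theorem for $\bc$-HOIB-Hardy classes. By Theorem \ref{thm: bchoibinclusion}, any $w \in H^{n,p}_{Bel,\mu}(\disk,\bc)$ decomposes uniquely as $w = \sum_{k=0}^{n-1}\widehat{(z^*)}^k w_k$, with $w_k \in H^p_{Bel,\mu}(\disk,\bc)$ for every $0 \le k \le n-1$. Theorem \ref{thm: bcbelbv} then supplies, for each $k$, a nontangential boundary value $w_{k,nt} \in L^p(\p\disk,\bc)$ together with the $L^p$-convergence $\lim_{r\nearrow 1}\int_0^{2\pi}||w_k(re^{i\theta})-w_{k,nt}(e^{i\theta})||_\bc^p\,d\theta = 0$. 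I would then define the candidate boundary value by
\[
    w_{nt}(e^{i\theta}) := \sum_{k=0}^{n-1}\widehat{(e^{-i\theta})}^k w_{k,nt}(e^{i\theta}).
\]

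The key computational observation is that $||\widehat{(z^*)}^k||_\bc = |z|^k$. Indeed, by Proposition \ref{everybchasplusandminus}, the idempotent components of $\widehat{z^*}$ are $\sca\widehat{z^*} \mp i\,\vect\widehat{z^*} = x \pm iy$ for $z = x+iy$, so both components of $\widehat{(z^*)}^k$ have modulus $|z|^k$. Since bicomplexification is $\R$-linear and multiplicative, $\widehat{(re^{-i\theta})}^k = r^k\widehat{(e^{-i\theta})}^k$, and in particular $||\widehat{(e^{-i\theta})}^k||_\bc = 1$. Combining this with \eqref{stareqn} shows $w_{nt} \in L^p(\p\disk,\bc)$. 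Existence of a nontangential limit of $w$ at almost every $e^{i\theta}$ is then immediate: each $\widehat{(z^*)}^k$ is a continuous (polynomial) function of $z$, and each $w_k$ admits a nontangential limit at almost every boundary point by Theorem \ref{thm: bcbelbv}, so the sum converges nontangentially to $w_{nt}(e^{i\theta})$.

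For the $L^p$-convergence claim, I would write
\begin{align*}
w(re^{i\theta}) - w_{nt}(e^{i\theta}) = \sum_{k=0}^{n-1}\widehat{(e^{-i\theta})}^k\Bigl[&r^k\bigl(w_k(re^{i\theta}) - w_{k,nt}(e^{i\theta})\bigr) \\
& + (r^k-1)\,w_{k,nt}(e^{i\theta})\Bigr],
\end{align*}
then apply the triangle inequality together with \eqref{stareqn}, raise to the $p$th power, and integrate to bound $\int_0^{2\pi}||w(re^{i\theta})-w_{nt}(e^{i\theta})||_\bc^p\,d\theta$ by a constant $C_{n,p}$ times
\[
\sum_{k=0}^{n-1}\int_0^{2\pi}||w_k(re^{i\theta})-w_{k,nt}(e^{i\theta})||_\bc^p\,d\theta \;+\; \sum_{k=0}^{n-1}|r^k-1|^p\int_0^{2\pi}||w_{k,nt}(e^{i\theta})||_\bc^p\,d\theta.
\]
As $r\nearrow 1$, the first sum vanishes by Theorem \ref{thm: bcbelbv}, and the second by $|r^k-1|^p \to 0$ and finiteness of $||w_{k,nt}||_{L^p(\p\disk,\bc)}$.

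The main obstacle is purely one of bookkeeping constants through repeated use of \eqref{stareqn}, the triangle inequality, and the elementary inequality $(a_1+\cdots+a_m)^p \le m^{p-1}(a_1^p+\cdots+a_m^p)$ when $p \ge 1$ (or subadditivity of $t \mapsto t^p$ when $0 < p \le 1$); no genuine new analytic difficulty arises, since the substantive convergence statement is inherited directly from Theorem \ref{thm: bcbelbv} for the first-order components.
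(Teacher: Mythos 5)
Your proposal is correct and follows essentially the same route as the paper: decompose via Theorem \ref{thm: bchoibinclusion}, apply Theorem \ref{thm: bcbelbv} to each component, define $w_{nt}$ as the corresponding boundary sum, and split the error into a term controlled by the first-order convergence and a term carrying the factor $|r^k-1|^p$. The only (immaterial) difference is that you attach $(r^k-1)$ to the fixed boundary function $w_{k,nt}$ and conclude directly from $\|w_{k,nt}\|_{L^p(\p\disk,\bc)}<\infty$, whereas the paper attaches $(1-r^k)$ to the interior values $w_k(re^{i\theta})$ and invokes dominated convergence; both are valid.
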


\begin{proof}

 Let $w \in H^{n,p}_{Bel,\mu}(\disk,\bc)$. By Theorem \ref{thm: bchoibinclusion}, 
        \[
            w = \sum_{k = 0}^{n-1}  \widehat{(z^*)}^k w_k,
        \]
        where $w_k \in H^p_{Bel,\mu}(\disk,\bc)$, for every $k$. By Theorem \ref{thm: bcbelbv}, since $w_k \in H^p_{Bel,\mu}(\disk,\bc)$, for every $k$, it follows that $w_{k,nt}$ exists, is in $L^p(\p \disk,\bc)$, and 
        \begin{equation}\label{convfork}
            \lim_{r \nearrow 1} \int_0^{2\pi} ||w_{k,nt}(e^{i\theta}) - w_k(re^{i\theta}) ||^p_{\bc} \, d\theta  = 0. 
        \end{equation}

        Since $w = \sum_{k = 0}^{n-1}  \widehat{(z^*)}^k w_k$, it follows that 
        \[
            w_{nt} = \sum_{k = 0}^{n-1}  \widehat{(e^{i\theta})^*}^k w_{k,nt}
        \]
        exists, and, for $r \in (0,1)$, we have
        \begin{align*}
            &\int_0^{2\pi} ||w_{nt}(e^{i\theta}) - w(re^{i\theta}) ||^p_{\bc} \, d\theta \\
            &= \int_0^{2\pi} ||\sum_{k = 0}^{n-1}  \widehat{(e^{i\theta})^*}^k w_{k,nt}(e^{i\theta}) - \sum_{k = 0}^{n-1}  \widehat{(re^{i\theta})^*}^k w_k(re^{i\theta}) ||^p_{\bc} \, d\theta \\
            &\leq C_p \int_0^{2\pi} ||\sum_{k = 0}^{n-1}  \widehat{(e^{i\theta})^*}^k w_{k,nt}(e^{i\theta}) - \sum_{k = 0}^{n-1}  \widehat{(e^{i\theta})^*}^k w_k(re^{i\theta}) ||^p_{\bc} \, d\theta \\
            &\quad\quad + C_p\int_0^{2\pi} ||\sum_{k = 0}^{n-1}  \widehat{(e^{i\theta})^*}^k w_k(re^{i\theta}) - \sum_{k = 0}^{n-1}  \widehat{(re^{i\theta})^*}^k w_k(re^{i\theta}) ||^p_{\bc} \, d\theta \\
            &\leq C_{p,n} \sum_{k = 0}^{n-1} \int_0^{2\pi} ||  \widehat{(e^{i\theta})^*}^k (w_{k,nt}(e^{i\theta}) - w_k(re^{i\theta})) ||^p_{\bc} \, d\theta \\
            &\quad\quad + C_{p,n} \sum_{k = 0}^{n-1} \int_0^{2\pi}  ||  (\widehat{(e^{i\theta})^*}^k -  \widehat{(re^{i\theta})^*}^k )w_k(re^{i\theta}) ||^p_{\bc} \, d\theta \\
            &\leq D_{p,n} \sum_{k = 0}^{n-1} \int_0^{2\pi} ||  w_{k,nt}(e^{i\theta}) - w_k(re^{i\theta}) ||^p_{\bc} \, d\theta \\
            &\quad\quad + C_{p,n} \sum_{k = 0}^{n-1} \int_0^{2\pi}  ||  (\widehat{(e^{i\theta})^*}^k -  \widehat{(re^{i\theta})^*}^k )w_k(re^{i\theta}) ||^p_{\bc} \, d\theta ,
        \end{align*}
        where $C_p$, $C_{p,n}$, and $D_{p,n}$ are constants that depend on only $p$ or $p$ and $n$, respectively. Since
        \begin{align*}
            & (\widehat{(e^{i\theta})^*}^k -  \widehat{(re^{i\theta})^*}^k )w_k(re^{i\theta}) \\
            &= (e^{-jk\theta} - r^k e^{-jk\theta}) (p^+ w_k^+(re^{i\theta}) + p^- w_k^-(re^{i\theta})) \\
            &= (p^+ e^{ik\theta} + p^- e^{-ik\theta} - r^k (p^+ e^{ik\theta} + p^- e^{-ik\theta})) (p^+ w_k^+(re^{i\theta}) + p^- w_k^-(re^{i\theta})) \\
            &= (p^+ (1-r^k)e^{ik\theta} + p^- (1-r^k)e^{-ik\theta} ) (p^+ w_k^+(re^{i\theta}) + p^- w_k^-(re^{i\theta})) \\
            &= p^+ (1-r^k)e^{ik\theta}w_k^+(re^{i\theta}) + p^- (1-r^k)e^{-ik\theta} w_k^-(re^{i\theta}),
        \end{align*}

       it follows that 
        \begin{align*}
&           D_{p,n} \sum_{k = 0}^{n-1} \int_0^{2\pi}                ||  w_{k,nt}(e^{i\theta}) -                                   w_k(re^{i\theta}) ||^p_{\bc} \, d\theta \\
            &\quad\quad + C_{p,n} \sum_{k = 0}^{n-1} \int_0^{2\pi}  ||  (\widehat{(e^{i\theta})^*}^k -  \widehat{(re^{i\theta})^*}^k )w_k(re^{i\theta}) ||^p_{\bc} \, d\theta\\
            &= D_{p,n} \sum_{k = 0}^{n-1} \int_0^{2\pi} ||  w_{k,nt}(e^{i\theta}) - w_k(re^{i\theta}) ||^p_{\bc} \, d\theta \\
            &\quad\quad + C_{p,n} \sum_{k = 0}^{n-1} \int_0^{2\pi}  || p^+ (1-r^k)e^{ik\theta}w_k^+(re^{i\theta}) + p^- (1-r^k)e^{-ik\theta} w_k^-(re^{i\theta}) ||^p_{\bc} \, d\theta\\
            &\leq D_{p,n} \sum_{k = 0}^{n-1} \int_0^{2\pi} ||  w_{k,nt}(e^{i\theta}) - w_k(re^{i\theta}) ||^p_{\bc} \, d\theta \\
            &\quad\quad + C_{p,n} 2^{-p/2}\sum_{k = 0}^{n-1} \int_0^{2\pi}  \left( |(1-r^k)e^{ik\theta}w_k^+(re^{i\theta})| + |(1-r^k)e^{-ik\theta} w_k^-(re^{i\theta}) | \right)^p \, d\theta\\
            &\leq D_{p,n} \sum_{k = 0}^{n-1} \int_0^{2\pi} ||  w_{k,nt}(e^{i\theta}) - w_k(re^{i\theta}) ||^p_{\bc} \, d\theta \\
            &\quad\quad + E_{p,n}\sum_{k = 0}^{n-1} \int_0^{2\pi}  |(1-r^k)|^p\, |w_k^+(re^{i\theta})|^p\,d\theta \\
            &\quad\quad + E_{p,n}\sum_{k = 0}^{n-1} \int_0^{2\pi} |(1-r^k)|^p \, | w_k^-(re^{i\theta}) |^p  \, d\theta ,
        \end{align*}
        where $C_{p,n}$, $D_{p,n}$, and $E_{p,n}$ are constants that depend on only $p$ and $n$. Now, the first sum in the last line of the inequality converges to zero, as $r \nearrow 1$, because of (\ref{convfork}), while the second and third sums in the last line of the inequality converge to zero by Lebesgue's Dominated Convergence Theorem. Therefore, 
        \[
            \lim_{r \nearrow 1} \int_0^{2\pi} ||w_{nt}(e^{i\theta}) - w(re^{i\theta}) ||^p_{\bc} \, d\theta  = 0. 
        \]

\end{proof}

\begin{theorem}
For $p$ a positive real number, $n$ a positive integer, and $\mu \in W^{n-1,\infty}(\disk,\bc)$ such that $||\mu||_{W^{n-1,\infty}(\disk,\bc)}\leq c < 1$ for some positive real number $c$, if $w = \sum_{k=0}^{n-1} \widehat{ (z^*)}^k w_k \in H^{n,p}_{Bel,\mu}(\disk,\bc)$  and $w_{k,nt} \in L^s(\p\disk,\bc)$, $s> p$, for $0 \leq k \leq n-1$, then $w \in H^{n,s}_{Bel,\mu}(\disk,\bc)$. 
\end{theorem}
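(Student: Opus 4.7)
The plan is to reduce to the first-order case by using the representation established earlier and then applying the first-order boundary-upgrading theorem componentwise. Since Theorem \ref{thm: bchoibinclusion} (the first one, characterizing membership in $H^{n,p}_{Bel,\mu}(\disk,\bc)$ via the components $w_k$) has already been proved, and Theorem \ref{thm: bcbetterbvbetterleb} upgrades a first-order bicomplex Beltrami-Hardy function from class $p$ to class $s$ whenever its nontangential boundary value lies in $L^s(\p\disk,\bc)$, the whole proof should be a short chain of applications.

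First, from the hypothesis $w \in H^{n,p}_{Bel,\mu}(\disk,\bc)$ and Theorem \ref{thm: bchoibinclusion} (the membership-characterization version), each component $w_k$ of the representation $w = \sum_{k=0}^{n-1}\widehat{(z^*)}^k w_k$ belongs to $H^p_{Bel,\mu}(\disk,\bc)$, for $0 \leq k \leq n-1$. Next, by Theorem \ref{thm: bcbelbv}, each $w_k$ possesses a nontangential boundary value $w_{k,nt} \in L^p(\p\disk,\bc)$; the hypothesis of the current theorem strengthens this to $w_{k,nt} \in L^s(\p\disk,\bc)$ for $s > p$. Applying Theorem \ref{thm: bcbetterbvbetterleb} componentwise then yields $w_k \in H^s_{Bel,\mu}(\disk,\bc)$ for each $k$. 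A second application of Theorem \ref{thm: bchoibinclusion}, now with exponent $s$ in place of $p$, recovers $w \in H^{n,s}_{Bel,\mu}(\disk,\bc)$.

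The only issue worth flagging is bookkeeping: one should confirm that the nontangential boundary value of the sum $w = \sum_{k=0}^{n-1}\widehat{(z^*)}^k w_k$ decomposes cleanly as $w_{nt} = \sum_{k=0}^{n-1}\widehat{(e^{-i\theta})}^k w_{k,nt}$, so that the hypothesis ``$w_{k,nt} \in L^s(\p\disk,\bc)$ for every $k$'' is consistent and truly provides the input Theorem \ref{thm: bcbetterbvbetterleb} demands. This is exactly the decomposition used already in the proof of Theorem \ref{thm: bcbelbvcon} and follows from the linear independence of $1, \widehat{(z^*)}, \dots, \widehat{(z^*)}^{n-1}$ over the first-order Beltrami solutions. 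Apart from this verification, the argument is a direct two-step reduction to the first-order theory, and no genuine obstacle arises.
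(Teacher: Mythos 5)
Your proof is correct and takes essentially the same route as the paper's: both reduce to the components via the membership characterization, apply Theorem \ref{thm: bcbetterbvbetterleb} componentwise, and reassemble with the characterization at exponent $s$. The bookkeeping point you flag is in fact moot, since the hypothesis is already stated directly in terms of the $w_{k,nt}$ rather than in terms of $w_{nt}$.
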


\begin{proof}
    By Theorem \ref{thm: bchoibinclusion}, if $w \in H^{n,p}_{Bel,\mu}(\disk,\bc)$, then $w = \sum_{k=0}^{n-1} \widehat{ (z^*)}^k w_k$ and $w_k \in H^p_{Bel,\mu}(\disk,\bc)$, for every $k$. By Theorem \ref{thm: bcbelbvcon}, if $w_k \in H^p_{Bel,\mu}(\disk,\bc)$, for every $k$, then, for every $k$, the nontangential limit $w_{k,nt} \in L^p(\p\disk,\bc)$ exists. By Theorem \ref{thm: bcbetterbvbetterleb}, since $w_k \in H^p_{Bel,\mu}(\disk,\bc)$ and $w_{k,nt} \in L^s(\p\disk,\bc)$, for every $k$, it follows that $w_k \in H^s_{Bel,\mu}(\disk,\bc)$, for every $k$. Therefore, by Theorem \ref{thm: bchoibinclusion}, $w \in H^{n,s}_{Bel,\mu}(\disk,\bc)$.
\end{proof}

\section{The Conjugate Beltrami Equation}\label{section: conjbel}

In this section, we define a bicomplex version of the conjugate Beltrami equation using the bicomplex $\dbar$ operator from Definition \ref{bcdbardef}. Using these equations, we define Hardy classes of bicomplex-valued solutions and show that these functions exhibit properties of the classic Hardy spaces. This extends work from the complex-valued function setting that can be found in \cite{PozHardy, CompOp, moreVekHardy, conjbel}.

\subsection{Definition and Representation}

\begin{deff}
    Let $\mu : \disk \to \bc$ be such that there exists a real constant $c$ and $||\mu||_{L^\infty(\disk,\bc)} \leq c < 1$. We call any equation of the form 
    \[
        \dbar w = \mu \, \dbar \overline{w}
    \]
    a $\bc$-conjugate-Beltrami equation. 
\end{deff}

\begin{prop}\label{prop: bcconjbelimpliesconjbbel}
    Let $\mu : \disk \to \bc$ be such that there exists a real constant $c$ and $||\mu||_{L^\infty(\disk,\bc)} \leq c < 1$. Every solution $w:\disk \to\bc$ of 
    \[
        \dbar w = \mu \, \dbar \overline{w}
    \]
    has the form 
    \[
        w = p^+ w^+ + p^- w^-,
    \]
    where $w^+:\disk\to\mathbb{C}$ solves the $\mathbb{C}$-conjugate-Beltrami equation
    \[
        \frac{\p (w^+)^*}{\p z^*} = (\mu^+)^* \frac{\p w^+}{\p z^*}
    \]
    and $w^- : \disk \to \mathbb{C}$ solves the $\mathbb{C}$-conjugate-Beltrami equation
    \[
        \frac{\p w^-}{\p z^*} = \mu^- \frac{\p (w^-)^*}{\p z^*}.
    \]
\end{prop}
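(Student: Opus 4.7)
The plan is to follow exactly the template of Proposition \ref{prop: bcbelimpliescomplexbel}, now adapted to accommodate the bicomplex conjugate $\overline{w}$ appearing on the right-hand side. I would begin by invoking Proposition \ref{everybchasplusandminus} to write $w = p^+ w^+ + p^- w^-$ and $\mu = p^+\mu^+ + p^- \mu^-$ with $w^\pm,\mu^\pm:\disk\to\mathbb{C}$, and then compute the idempotent expansion of $\overline{w}$. This preparatory step is the crucial one: it is what brings the complex conjugates $(w^\pm)^*$ into play inside the $p^+$ and $p^-$ components, which is ultimately what allows the resulting equations to be interpreted as $\mathbb{C}$-conjugate-Beltrami equations.

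Next, I would apply $\dbar$ in its idempotent form from Remark \ref{remark: idempotentdiffop} to both $w$ and $\overline{w}$, using the orthogonality relations $(p^\pm)^2 = p^\pm$ and $p^+ p^- = 0$, and then multiply the expression for $\dbar \overline{w}$ by $\mu = p^+\mu^+ + p^-\mu^-$. Substituting into $\dbar w = \mu\,\dbar \overline{w}$ and appealing to the uniqueness clause of Proposition \ref{everybchasplusandminus} to equate $p^+$- and $p^-$-coefficients produces two decoupled first-order complex PDEs: one involving only $w^+$ and $(w^+)^*$, and the other involving only $w^-$ and $(w^-)^*$. The latter is already in the form of the claimed $\mathbb{C}$-conjugate-Beltrami equation for $w^-$. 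Applying complex conjugation to the former, together with the identity $\overline{\partial_z f} = \partial_{z^*} f^*$, then converts it into the claimed $\mathbb{C}$-conjugate-Beltrami equation for $(w^+)^*$.

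The argument is purely algebraic, with no analytic input beyond the idempotent representation itself, so the main obstacle is bookkeeping. One must keep track carefully of the interplay between the bicomplex conjugation $\overline{\cdot}$ (which acts on the $j$-part, and hence rearranges the idempotent basis $p^\pm$) and the complex conjugation $(\cdot)^*$ (which acts on the $\mathbb{C}$-valued scalar coefficients $w^\pm$), and then verify at the end that both component equations indeed match the template of the $\mathbb{C}$-conjugate-Beltrami equation defined earlier.
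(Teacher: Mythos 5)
Your plan is correct and is essentially identical to the paper's proof: expand $w$, $\mu$, and $\overline{w}$ idempotently via Proposition~\ref{everybchasplusandminus}, write $\dbar = p^+\frac{\p}{\p z} + p^-\frac{\p}{\p z^*}$ as in Remark~\ref{remark: idempotentdiffop}, use $(p^\pm)^2=p^\pm$ and $p^+p^-=0$ to match the $p^+$- and $p^-$-coefficients, and then apply complex conjugation to the $p^+$-component equation to put it in conjugate-Beltrami form.

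The one point to get right --- and it is the crux of the whole proposition --- is the idempotent expansion of $\overline{w}$. The expansion that makes the argument work, and the one the paper actually uses (consistently with Theorem~\ref{bcvekimpliescvek}), is $\overline{w} = p^+ (w^+)^* + p^- (w^-)^*$: the conjugation acts by complex-conjugating each idempotent component \emph{in place}. Your parenthetical description of the bicomplex conjugation as something that ``rearranges the idempotent basis $p^\pm$'' points at the wrong expansion: a conjugation that merely flips the sign of the $j$-part swaps the components, giving $p^+ w^- + p^- w^+$, and with that expansion the two component equations become $\frac{\p w^+}{\p z} = \mu^+ \frac{\p w^-}{\p z}$ and $\frac{\p w^-}{\p z^*} = \mu^- \frac{\p w^+}{\p z^*}$, which couple $w^+$ to $w^-$ and are not conjugate-Beltrami equations, so the stated conclusion would not follow. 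Since you also say explicitly that the expansion should bring $(w^\pm)^*$ into the $p^\pm$ components and that the resulting equations decouple, you clearly have the correct outcome in mind; just make sure the computation of $\overline{w}$ is done with the component-conjugating convention, after which the rest of your argument goes through exactly as in the paper.
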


\begin{proof}

    Using the idempotent representation of the functions and the differential operators, if $w$ solves \eqref{bcbeltramieqn}, then, by  Remark \ref{remark: idempotentdiffop}, we have
    \begin{align*}
        \dbar w &= \mu \dbar \overline{w} \\
        \left(p^+\frac{\p}{\p z} + p^- \frac{\p}{\p z^*}\right)(p^+w^+ + p^- w^-) &= (p^+\mu^+ + p^- \mu^-)  \left(p^+\frac{\p}{\p z} + p^- \frac{\p}{\p z^*}\right)(p^+(w^+)^* + p^- (w^-)^*)\\
        p^+ \frac{\p w^+}{\p z} + p^- \frac{\p w^-}{\p z^*} &= p^+ \mu^+ \frac{\p (w^+)^*}{\p z} + p^- \mu^- \frac{\p (w^-)^*}{\p z^*}.
    \end{align*}
    Since idempotent representations are unique, it follows that
    \[
        \frac{\p w^+}{\p z} = \mu^+ \frac{\p (w^+)^*}{\p z}
    \quad \text{and} \quad
        \frac{\p w^-}{\p z^*} = \mu^- \frac{\p (w^-)^*}{\p z^*}.
    \]
    Applying complex conjugation to both sides of the right-hand side equation, we have
    \[
        \frac{\p (w^+)^*}{\p z^*} = (\mu^+)^* \frac{\p w^+}{\p z^*}
    \quad \text{and} \quad
       \frac{\p w^-}{\p z^*} = \mu^- \frac{\p (w^-)^*}{\p z^*}.
    \]
\end{proof}

    \subsection{Hardy Spaces}

\begin{deff}
For $\mu : \disk \to \bc$ such that $||\mu||_{L^\infty(\disk,\bc)} \leq c < 1$, for some real constant $c$, and $0 < p < \infty$, we define the $\bc$-conjugate-Beltrami-Hardy spaces $H^p_{conj,\mu}(\disk,\bc)$ to be those functions $w: \disk\to\bc$ such that $w$ solves
    \[
        \dbar w = \mu \, \dbar \overline{w}
    \]
and 
    \[
        ||w||_{H^p_{\bc}} := \left( \sup_{0< r < 1} \frac{1}{2\pi}\int_0^{2\pi} ||w(re^{i\theta})||_{\bc}^p\,d\theta\right)^{1/p} < \infty.
    \]
\end{deff}

\begin{theorem}\label{thm: bcconjbelhardyrep}
    For $\mu : \disk \to \bc$ such that there exists a real constant $c$ satisfying $||\mu||_{L^\infty(\disk,\bc)} \leq c < 1$ and $0 < p < \infty$, $w = p^+ w^+ + p^- w^- \in H^p_{conj,\mu}(\disk,\bc)$ if and only if $(w^+)^* \in H^p_{conj, (\mu^+)^*}(\disk)$ and $w^- \in H^p_{conj,\mu^-}(\disk)$. 
\end{theorem}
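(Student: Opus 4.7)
The plan is to imitate exactly the proof of Theorem \ref{thm: bcbelhardyrep} but with Proposition \ref{prop: bcconjbelimpliesconjbbel} replacing Proposition \ref{prop: bcbelimpliescomplexbel}. The structure is straightforward because the key ingredients are identical: a decomposition of the equation into its $p^\pm$-components, together with the two-sided bicomplex-norm estimate \eqref{bcbasicestimates}.

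First, for the forward direction, I would take $w \in H^p_{conj,\mu}(\disk,\bc)$ and apply Proposition \ref{prop: bcconjbelimpliesconjbbel} to deduce that $(w^+)^*$ satisfies the $\C$-conjugate-Beltrami equation with coefficient $(\mu^+)^*$ and that $w^-$ satisfies the $\C$-conjugate-Beltrami equation with coefficient $\mu^-$. Then, using the left-hand inequality in \eqref{bcbasicestimates}, namely $|w^\pm(re^{i\theta})| \leq \sqrt{2}\,\|w(re^{i\theta})\|_\bc$, and noting $|(w^+)^*(re^{i\theta})| = |w^+(re^{i\theta})|$, I obtain
\[
\sup_{0<r<1}\int_0^{2\pi} |(w^+)^*(re^{i\theta})|^p\,d\theta \leq 2^{p/2}\sup_{0<r<1}\int_0^{2\pi} \|w(re^{i\theta})\|_\bc^p\,d\theta < \infty,
\]
and similarly for $w^-$. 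This places $(w^+)^*$ in $H^p_{conj,(\mu^+)^*}(\disk)$ and $w^-$ in $H^p_{conj,\mu^-}(\disk)$.

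For the reverse direction, assume $(w^+)^* \in H^p_{conj,(\mu^+)^*}(\disk)$ and $w^- \in H^p_{conj,\mu^-}(\disk)$. Setting $w = p^+ w^+ + p^- w^-$, a direct computation using Remark \ref{remark: idempotentdiffop} and uniqueness of the idempotent representation (reversing the calculation in the proof of Proposition \ref{prop: bcconjbelimpliesconjbbel}) shows $\dbar w = \mu\,\dbar \overline w$. The right-hand inequality in \eqref{bcbasicestimates} then gives
\[
\int_0^{2\pi} \|w(re^{i\theta})\|_\bc^p\,d\theta \leq C_p\left(\int_0^{2\pi}|(w^+)^*(re^{i\theta})|^p\,d\theta + \int_0^{2\pi}|w^-(re^{i\theta})|^p\,d\theta\right),
\]
for a constant $C_p$ depending only on $p$, and taking the supremum over $r \in (0,1)$ finishes the argument.

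There is no real obstacle here: the algebraic decomposition is already supplied by Proposition \ref{prop: bcconjbelimpliesconjbbel}, and the norm equivalence needed to transfer finiteness of the $H^p$ integral between $w$ and its $p^\pm$-components is exactly the pointwise estimate \eqref{bcbasicestimates}. The proof is essentially a verbatim adaptation of the proof of Theorem \ref{thm: bcbelhardyrep}, with ``$\dbar w = \mu\,\p w$'' replaced throughout by ``$\dbar w = \mu\,\dbar \overline w$'' and the corresponding complex equations replaced by their conjugate Beltrami counterparts.
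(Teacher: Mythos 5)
Your proposal is correct and follows essentially the same route as the paper: the paper's own proof of Theorem \ref{thm: bcconjbelhardyrep} applies Proposition \ref{prop: bcconjbelimpliesconjbbel} for the decomposition, uses $|(w^+)^*| = |w^+|$ together with the two-sided estimate \eqref{bcbasicestimates} to pass the finiteness of the $H^p$ integral between $w$ and its idempotent components, and handles the converse by direct computation plus the other half of \eqref{bcbasicestimates}. The paper even remarks immediately after the proof that the argument is the same as that of Theorem \ref{thm: bcbelhardyrep}, which is exactly your observation.
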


\begin{proof} By Proposition \ref{prop: bcconjbelimpliesconjbbel}, if $w$ solves $\dbar w = \mu \dbar \overline{w}$, then
    \[
        \frac{\p (w^+)}{\p z^*} = (\mu^+)^* \frac{\p w^+}{\p z^*}
    \quad \text{and} \quad
        \frac{\p w^-}{\p z^*} = \mu^- \frac{\p (w^-)^*}{\p z^*}.
    \]
    By \eqref{bcbasicestimates}, for every $r \in (0,1)$, we have 
    \begin{align*}
         \int_0^{2\pi} |(w^+)^*(re^{i\theta})|^p \,d\theta
         &=
        \int_0^{2\pi} |w^+(re^{i\theta})|^p \,d\theta\\
        &\leq 2^{p/2}\int_0^{2\pi} ||w(re^{i\theta})||_\bc^p \,d\theta \\
        &\leq 2^{p/2} \sup_{0 < r< 1} \int_0^{2\pi} ||w(re^{i\theta})||_\bc^p \,d\theta< \infty
    \end{align*}
    and 
    \begin{align*}
          \int_0^{2\pi} |w^-(re^{i\theta})|^p \,d\theta 
         &\leq 2^{p/2}\int_0^{2\pi} ||w(re^{i\theta})||_\bc^p \,d\theta \\
         &\leq 2^{p/2}\sup_{0 < r < 1}\int_0^{2\pi} ||w(re^{i\theta})||_\bc^p \,d\theta < \infty.
    \end{align*}
    Thus, 
    \[
        \sup_{0 < r < 1}  \int_0^{2\pi} |(w^+)^*(re^{i\theta})|^p \,d\theta < \infty,
    \]
    and
    \[
        \sup_{0 < r < 1}\int_0^{2\pi} |w^-(re^{i\theta})|^p \,d\theta < \infty.
    \]
    Therefore, $(w^+)^* \in H^p_{conj, (\mu^+)^*}(\disk)$ and $w^- \in H^p_{conj,\mu^-}(\disk)$.

    Now, if $(w^+)^* \in H^p_{conj, (\mu^+)^*}(\disk)$ and $w^- \in H^p_{conj,\mu^-}(\disk)$, then $w = p^+ w^+ + p^- w^-$ solves $\dbar w = \mu \dbar \overline{w}$ by direct computation. Also, by \eqref{bcbasicestimates}, we have for every $r \in (0,1)$ that 
    \begin{align*}
        \int_0^{2\pi} ||w(re^{i\theta})||_\bc^p \,d\theta 
        &\leq 2^{-p/2} \int_0^{2\pi} (|w^+(re^{i\theta})|+|w^-(re^{i\theta})|)^p\,d\theta \\
        &\leq C_p \left( \int_0^{2\pi} |(w^+)^*(re^{i\theta})|^p \,d\theta + \int_0^{2\pi} |w^-(re^{i\theta})|^p\,d\theta\right) \\
        &\leq C_p \left( \sup_{0 < r < 1}\int_0^{2\pi} |(w^+)^*(re^{i\theta})|^p \,d\theta + \sup_{0 < r < 1}\int_0^{2\pi} |w^-(re^{i\theta})|^p\,d\theta\right) < \infty,
    \end{align*}
    where $C_p$ is a constant that depends on only $p$. Thus, 
    \[
        \sup_{0 < r < 1}\int_0^{2\pi} ||w(re^{i\theta})||_\bc^p \,d\theta < \infty,
    \]
    and $w \in H^p_{conj,\mu}(\disk,\bc)$. 
\end{proof}

Note that the proof of this result uses the same argument as the proof of Theorem \ref{thm: bcbelhardyrep}. This is true for many of the proofs in this section and the one that follows. In the cases where there are few differences with a previous proof, we indicate the relevant changes but do not repeat all of the details.

\begin{theorem}
    For $\mu \in W^{1,s}(\disk,\bc)$, $s>2$, such that there exists a real constant $c$ and $||\mu||_{L^\infty(\disk,\bc)} \leq c < 1$ and $0 < p < \infty$, every $w \in H^p_{conj,\mu}(\disk,\bc)$ is an element of $L^m(\disk,\bc)$, for every $0 < m < 2p$. 
\end{theorem}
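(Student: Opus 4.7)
The plan is to mirror the proof of Theorem \ref{thm: bcbelhpinbetterlp}, replacing each Beltrami-side ingredient with its conjugate-Beltrami analogue that has already been established. Specifically, I would begin by invoking Theorem \ref{thm: bcconjbelhardyrep} (proved immediately above) to decompose any $w \in H^p_{conj,\mu}(\disk,\bc)$ via its idempotent representation $w = p^+ w^+ + p^- w^-$ and conclude $(w^+)^* \in H^p_{conj,(\mu^+)^*}(\disk)$ and $w^- \in H^p_{conj,\mu^-}(\disk)$.

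Next, I would apply Theorem \ref{thm: conjbelinclusioninlebesgue} to each complex-valued component. The hypothesis $\mu \in W^{1,s}(\disk,\bc)$ with $s>2$ transfers, via the idempotent decomposition $\mu = p^+ \mu^+ + p^- \mu^-$, to $\mu^-, (\mu^+)^* \in W^{1,s}(\disk)$ with $L^\infty$ norms bounded by the same constant (up to an absolute factor controlled by \eqref{bcbasicestimates}, which still leaves us strictly below $1$ after rescaling if needed; this smallness aspect is the one technical check that must be made explicit). Theorem \ref{thm: conjbelinclusioninlebesgue} then yields $(w^+)^*, w^- \in L^m(\disk)$ for every $0 < m < 2p$. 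Since $|w^+| = |(w^+)^*|$ pointwise, we also obtain $w^+ \in L^m(\disk)$.

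Finally, by Proposition \ref{prop: pminlpfuncinlp}, $w^\pm \in L^m(\disk)$ is equivalent to $w = p^+ w^+ + p^- w^- \in L^m(\disk,\bc)$, which is the desired conclusion.

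The only potential obstacle is the bookkeeping regarding the contraction constants for $(\mu^+)^*$ and $\mu^-$: the hypothesis controls $\|\mu\|_{L^\infty(\disk,\bc)}$, and one must verify from \eqref{bcbasicestimates} that $\|(\mu^+)^*\|_{L^\infty(\disk)}$ and $\|\mu^-\|_{L^\infty(\disk)}$ are likewise bounded by a constant strictly less than $1$ so that Theorem \ref{thm: conjbelinclusioninlebesgue} applies. Beyond this routine check, the argument is a direct transcription of the proof of Theorem \ref{thm: bcbelhpinbetterlp}, with no new analytic difficulty.
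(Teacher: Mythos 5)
Your proposal matches the paper's proof exactly: the paper disposes of this theorem in one line by repeating the argument of Theorem \ref{thm: bcbelhpinbetterlp} with Theorem \ref{thm: bcconjbelhardyrep} in place of Theorem \ref{thm: bcbelhardyrep} and Theorem \ref{thm: conjbelinclusioninlebesgue} in place of Theorem \ref{thm: belhpinbetterlp}, which is precisely the substitution you describe. The contraction-constant check you flag is a real subtlety --- \eqref{bcbasicestimates} only gives $|\mu^\pm|\leq \sqrt{2}\,\|\mu\|_{\bc}\leq \sqrt{2}\,c$, which need not be below $1$ --- but the paper leaves this point unaddressed as well, so your proof is faithful to (and slightly more candid than) the published argument.
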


\begin{proof}
The proof of this result is the same as Theorem \ref{thm: bcbelhpinbetterlp} except appeals are made to Theorem \ref{thm: conjbelinclusioninlebesgue} instead of Theorem \ref{thm: belhpinbetterlp} and Theorem \ref{thm: bcconjbelhardyrep} instead of Theorem \ref{thm: bcbelhardyrep}.

\end{proof}

    \subsection{Boundary Behavior}
    
     \begin{theorem}\label{thm: bcconjbelbv}\label{thm: bcconjbetterbvbetterleb}\label{thm: bcconjzerobdimpliesequivzero}
        For $\mu \in W^{1,q}(\disk,\bc)$ such that there exists a real constant $c$ satisfying $||\mu||_{L^\infty(\disk,\bc)} \leq c < 1$ and $0 < p < \infty$, the following three statements follow:
        	\begin{enumerate}
		\item Every $w \in H^p_{conj,\mu}(\disk,\bc)$ has a nontangential boundary value $w_{nt} \in L^p(\p \disk, \bc)$ and
        \[
        \lim_{r \nearrow 1} \int_0^{2\pi} ||w_{nt}(e^{i\theta}) - w(re^{i\theta}) ||^p_{\bc} \, d\theta  = 0. 
    \]  
	\item Every $w \in H^p_{conj,\mu}(\disk,\bc)$ with nontangential boundary value $w_{nt} \in L^s(\p\disk,\bc)$, where $s>p$, is an element of $H^s_{conj,\mu}(\disk,\bc)$
	\item Every $w \in H^p_{conj,\mu}(\disk,\bc)$ with nontangential boundary value $w_{nt}$ that vanishes on a set $E \subset \p \disk$ of positive measure is identically equal to zero. 
	\end{enumerate}
    \end{theorem}

    \begin{proof}

    The proof of this theorem is the same as Theorem \ref{thm: bcbelbv} with an appeal to Theorem \ref{thm: bcconjbelhardyrep} instead of Theorem \ref{thm: bcbelhardyrep} and an appeal to Theorem \ref{conjbvcon} instead of Theorem \ref{thm: belbvcon}.
    
    \end{proof}

\subsection{Connections to Bicomplex Vekua Equations} 

Next, we show the connection between solutions of complex conjugate Beltrami equations and solutions of complex Vekua equation, see Section \ref{subsubsection: conjbelvekconnect}, is maintained in the setting of bicomplex-valued solutions. For more information about solutions of the bicomplex Vekua equation, see, for example,  \cite{BCHoiv, BCTransmutation, BCBergman, BCHarmVek, CastaKrav, FundBicomplex, ComplexSchr}. The next two results generalize aspects of Proposition 3.2.3.1 in \cite{conjbel}.

\begin{theorem}\label{thm: bcconjbelimpliesbcvek}
For real-valued $\mu\in W^{1,\infty}(\disk)$ such that there exists a constant $c$ satisfying $||\mu||_{L^\infty(\disk)}\leq c < 1$, define $\alpha \in L^\infty(\disk)$ by
\[
    \alpha := - \frac{1}{1-\mu^2} \dbar \mu.
\]
The function $f: \disk\to\bc$ solves the $\bc$-conjugate Beltrami equation
\[
        \dbar f = \mu\dbar \,\overline{f}
\]
if and only if the function $w: \disk\to\bc$ defined by 
\[
        w:= \frac{1}{\sqrt{1-\mu^2}} (f-\mu\overline{f})
\]
solves the $\bc$-Vekua equation
\[
        \dbar w = \alpha \overline{w}.
\]
\end{theorem}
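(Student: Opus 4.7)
The plan is to prove both implications of the biconditional simultaneously by establishing the pointwise identity
\[
\dbar w - \alpha \overline{w} \;=\; \frac{1}{\sqrt{1-\mu^2}}\bigl(\dbar f - \mu\, \dbar \overline{f}\bigr),
\]
which reduces the statement to the non-vanishing of $\sqrt{1-\mu^2}$ on $\disk$, guaranteed by $\|\mu\|_{L^\infty(\disk)} \leq c < 1$. The feature that makes this direct computation go through cleanly is that $\mu$ is real-valued, so bicomplex conjugation fixes $\mu$ and every real-valued function of $\mu$ (in particular $(1-\mu^2)^{-1/2}$), and the Leibniz rule for $\dbar$ together with the relation $\overline{ab} = \overline{a}\,\overline{b}$ for bicomplex multiplication both hold without modification.

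First I would expand $\dbar w$ by Leibniz, treating $(1-\mu^2)^{-1/2}$ as a real scalar factor and computing $\dbar\bigl((1-\mu^2)^{-1/2}\bigr) = \mu(1-\mu^2)^{-3/2}\, \dbar \mu$ via the chain rule:
\[
\dbar w \;=\; \frac{\mu\, \dbar \mu}{(1-\mu^2)^{3/2}}\bigl(f - \mu \overline{f}\bigr) + \frac{1}{\sqrt{1-\mu^2}}\bigl(\dbar f - (\dbar \mu)\overline{f} - \mu\, \dbar \overline{f}\bigr).
\]
Next I would compute $\overline{w} = (1-\mu^2)^{-1/2}(\overline{f} - \mu f)$ directly from the definition of $w$ and the distributivity of bicomplex conjugation over multiplication, which yields
\[
\alpha \overline{w} \;=\; -\frac{\dbar \mu}{(1-\mu^2)^{3/2}}\bigl(\overline{f} - \mu f\bigr).
\]
Subtracting the two expressions, all terms carrying $\dbar \mu$ collapse via the algebraic identity
\[
\mu(f - \mu \overline{f}) - (1-\mu^2)\overline{f} + (\overline{f} - \mu f) \;=\; 0,
\]
leaving only $(1-\mu^2)^{-1/2}(\dbar f - \mu\, \dbar \overline{f})$, as claimed.

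The one real bookkeeping burden is verifying this cancellation; there is no deeper obstacle, and in particular no need to pass to the idempotent decomposition. An alternative route would apply Proposition 3.2.3.1 of \cite{conjbel} componentwise to $(f^+)^*$ and $f^-$, each of which satisfies a complex conjugate Beltrami equation with the same real coefficient $\mu$ by Proposition \ref{prop: bcconjbelimpliesconjbbel}; however, matching the resulting complex Vekua transforms against the idempotent components of $w = \frac{1}{\sqrt{1-\mu^2}}(f - \mu\overline{f})$ requires some additional algebra, so the direct computation outlined above is the cleaner path.
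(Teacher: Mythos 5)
Your proof is correct, and it takes a genuinely different route from the paper's. The paper argues through the idempotent decomposition: it invokes Proposition \ref{prop: bcconjbelimpliesconjbbel} to reduce to two complex conjugate Beltrami equations for $(f^+)^*$ and $f^-$, applies Proposition 3.2.3.1 of \cite{conjbel} to each component to produce complex Vekua equations, reassembles these via Theorem \ref{bcvekimpliescvek}, and then checks algebraically that $p^+ g^* + p^- h$ coincides with $\frac{1}{\sqrt{1-\mu^2}}(f-\mu\overline{f})$, with a separate pass (via the inverse substitution $f=\frac{1}{\sqrt{1-\mu^2}}(w+\mu\overline{w})$) for the converse. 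Your single identity
\[
\dbar w - \alpha\,\overline{w} \;=\; \frac{1}{\sqrt{1-\mu^2}}\bigl(\dbar f - \mu\,\dbar\overline{f}\bigr)
\]
delivers both implications at once, since $(1-\mu^2)^{-1/2}$ is a real scalar bounded below by $1$ and hence not a zero divisor in $\bc$; I checked the cancellation, and the $\dbar\mu$ terms do collapse via $\mu(f-\mu\overline{f})+(\overline{f}-\mu f)=(1-\mu^2)\overline{f}$ exactly as you claim. The ingredients you use silently --- the Leibniz and chain rules for $\dbar$ (valid because $\dbar$ is a constant-coefficient first-order operator and bicomplex multiplication is bilinear and commutative), multiplicativity of the bicomplex conjugation, and $\overline{\mu}=\mu$ for real $\mu$ --- all hold, though a sentence justifying them in the bicomplex setting would make the write-up airtight. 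What your computation buys is self-containedness, a cleaner treatment of the ``if and only if,'' and the fact that it verifies the statement with the bicomplex coefficient $\alpha=-\frac{1}{1-\mu^2}\dbar\mu$ exactly as written, whereas the componentwise route implicitly requires matching the idempotent components of this $\alpha$ against the complex coefficient $-\frac{1}{1-\mu^2}\frac{\p\mu}{\p z^*}$ of Section \ref{subsubsection: conjbelvekconnect}. What the paper's approach buys is uniformity with the rest of the article, where every bicomplex statement is deduced from its complex counterpart through the idempotent representation.
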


\begin{proof}
By Proposition \ref{prop: bcconjbelimpliesconjbbel}, if $f$ solves 
\[
        \dbar f = \mu\dbar \,\overline{f},
\]
then $f^+$ and $f^-$ solve
\[
        \frac{\p (f^+)^*}{\p z^*} = (\mu^+)^* \frac{\p f^+}{\p z^*}
    \]
    and
    \[
        \frac{\p f^-}{\p z^*} = \mu^- \frac{\p (f^-)^*}{\p z^*},
    \]
respectively. Since $\mu$ is real-valued, this means $f^+$ and $f^-$ solve
\[
        \frac{\p (f^+)^*}{\p z^*} = \mu \frac{\p f^+}{\p z^*}
    \]
    and
    \[
        \frac{\p f^-}{\p z^*} = \mu \frac{\p (f^-)^*}{\p z^*},
    \]
respectively.
By Proposition 3.2.3.1 of \cite{conjbel} (see discussion in Section \ref{subsubsection: conjbelvekconnect}), this implies that the function $g$ defined by 
\[
    g := \frac{1}{\sqrt{1-\mu^2}} ((f^+)^* - \mu f^+)
\]
solves the Vekua equation 
\[
    \frac{\p g}{\p z^*} = \alpha g^*
\]
and the function $h$ defined by
\[
    h := \frac{1}{\sqrt{1-\mu^2}} (f^- - \mu (f^-)^*)
\]
solves
\[
    \frac{\p h}{\p z^*} = \alpha h^*.
\]
By Theorem \ref{bcvekimpliescvek}, $w:= p^+ g^* + p^- h$ solves
\[
    \dbar w = \alpha  \overline{w}.
\]
Observe that 
\begin{align*}
        w &= p^+ g^* + p^- h\\
        &= p^+ \left(\frac{1}{\sqrt{1-\mu^2}} ((f^+)^* - \mu f^+)\right)^* + p^- \left( \frac{1}{\sqrt{1-\mu^2}} (f^- - \mu (f^-)^*)\right)\\
        &= p^+ \left(\frac{1}{\sqrt{1-\mu^2}} (f^+ - \mu (f^+)^*)\right) + p^- \left( \frac{1}{\sqrt{1-\mu^2}} (f^- - \mu (f^-)^*)\right)\\
        &= \frac{1}{\sqrt{1-\mu^2}}\left( p^+ f^+ + p^- f^- -\mu(p^+ (f^+)^* + p^- (f^-)^*) \right)\\
        &= \frac{1}{\sqrt{1-\mu^2}} ( f- \mu \overline{f}).
\end{align*}

Now, suppose that $w$ solves $\bc$-Vekua equation
\[
        \dbar w = \alpha \overline{w}.
\]
By Theorem \ref{bcvekimpliescvek}, 
\[
        \frac{\p (w^+)^*}{\p z^*} = \alpha w^+
\]
and 
\[
        \frac{\p w^-}{\p z^*} = \alpha (w^-)^*.
\]
By Proposition 3.2.3.1 of \cite{conjbel}, this implies that the function $\tilde{g}$ defined by 
\[
        \tilde{g} := \frac{1}{\sqrt{1-\mu^2}}((w^+)^* + \mu w^+)
\]
solves
\[
        \frac{\p \tilde{g}}{\p z^*} = \mu \frac{\p (\tilde{g})^*}{\p z^*}
\]
and the function $\tilde{h}$ defined by 
\[
        \tilde{h} := \frac{1}{\sqrt{1-\mu^2}}( w^- + \mu (w^-)^*)
\]
solves
\[
        \frac{\p \tilde{h}}{\p z^*} = \mu \frac{\p (\tilde{h})^*}{\p z^*}.
\]
By direct computation, the function $f = p^+ (\tilde{g})^* + p^- \tilde{h}$ solves 
\[
    \dbar f = \mu \dbar \overline{f}.
\]

\end{proof}

\begin{comm}
    If the $w$ in the last theorem is known, then the function $f$ can be found to be 
    \[
        f = \frac{1}{\sqrt{1-\mu^2}} (w + \mu\overline{w}),
    \]
    as in the complex case.
\end{comm}

\begin{theorem}
Let $p$ be a positive real number. For real-valued $\mu\in W^{1,\infty}(\disk)$ such that there exists a constant $c$ satisfying $||\mu||_{L^\infty(\disk)}\leq c < 1$, define $\alpha \in L^\infty(\disk)$ by
\[
    \alpha := - \frac{1}{1- \mu^2} \dbar \mu.
\]
The function $f \in H^p_{conj,\mu}(\disk,\bc)$ if and only if $w := \frac{1}{\sqrt{1-\mu^2}} (f - \mu \overline{f}) \in H^p_{0, \alpha}(\disk,\bc)$. 
\end{theorem}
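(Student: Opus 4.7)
The plan is to combine the differential-equation correspondence established in Theorem \ref{thm: bcconjbelimpliesbcvek} with a pointwise two-sided comparison of the bicomplex norms $\|f\|_\bc$ and $\|w\|_\bc$. Theorem \ref{thm: bcconjbelimpliesbcvek} already says that $f$ solves $\dbar f = \mu \dbar \overline{f}$ if and only if $w := \frac{1}{\sqrt{1-\mu^2}}(f-\mu\overline{f})$ solves $\dbar w = \alpha \overline{w}$, so the entire content of this theorem reduces to showing the two $H^p$ size conditions are equivalent.

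First I would observe that bicomplex conjugation is isometric for $\|\cdot\|_\bc$. Indeed, since $\overline{p^\pm} = p^\mp$ and $w^\pm \in \C$ are fixed by bicomplex conjugation, we get $\overline{w} = p^+ w^- + p^- w^+$, so the two idempotent components are merely swapped and $\|\overline{w}\|_\bc = \|w\|_\bc$. Next, because $\mu$ is real-valued, $\mu = p^+\mu + p^-\mu$ gives $\|\mu v\|_\bc = |\mu|\, \|v\|_\bc$ for every $v\in\bc$. Combining these two facts with the triangle inequality yields the pointwise estimate
\[
    \|w(z)\|_{\bc} \leq \frac{1+|\mu(z)|}{\sqrt{1-\mu(z)^2}}\,\|f(z)\|_{\bc} \leq \sqrt{\frac{1+c}{1-c}}\,\|f(z)\|_{\bc}.
\]

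By the remark immediately following Theorem \ref{thm: bcconjbelimpliesbcvek}, the inverse relation $f = \frac{1}{\sqrt{1-\mu^2}}(w + \mu\overline{w})$ holds, and the same calculation gives the reverse bound
\[
    \|f(z)\|_{\bc} \leq \sqrt{\frac{1+c}{1-c}}\,\|w(z)\|_{\bc}.
\]
Raising these bounds to the $p$-th power, integrating over the circle of radius $r$, and taking the supremum over $0 < r < 1$ then shows
\[
    \|f\|_{H^p_\bc} \asymp \|w\|_{H^p_\bc},
\]
with constants depending only on $c$ and $p$. Hence one supremum is finite precisely when the other is, which together with the PDE equivalence from Theorem \ref{thm: bcconjbelimpliesbcvek} gives $f \in H^p_{conj,\mu}(\disk,\bc)$ if and only if $w \in H^p_{0,\alpha}(\disk,\bc)$. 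The argument has no real obstacle; the only step requiring care is verifying that bicomplex conjugation preserves $\|\cdot\|_\bc$ and that real-valued multipliers act as honest scalars on the bicomplex norm, both of which are short.
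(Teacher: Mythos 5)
Your proposal is correct and follows essentially the same route as the paper: invoke the PDE equivalence from Theorem \ref{thm: bcconjbelimpliesbcvek} together with the inverse relation $f = \frac{1}{\sqrt{1-\mu^2}}(w+\mu\overline{w})$, and then compare the two $H^p$ size conditions via the triangle inequality for $\|\cdot\|_{\bc}$. Your pointwise two-sided bound with the uniform constant $\sqrt{(1+c)/(1-c)}$ (using that bicomplex conjugation is a $\|\cdot\|_{\bc}$-isometry and that real $\mu$ scales the norm by $|\mu|$) is in fact a slightly more careful packaging of the estimate the paper carries out under the integral sign.
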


\begin{proof}
By Theorem \ref{thm: bcconjbelimpliesbcvek}, $f$ solves
\[
        \dbar f = \mu \dbar \overline{f}
\]
if and only if $w = \frac{1}{\sqrt{1-\mu^2}} (f - \mu \overline{f})$ solves 
\[
        \dbar w = \alpha \overline{w}.
\]

Suppose $f \in H^p_{conj, \mu}(\disk,\bc)$. Observe that 
\begin{align*}
    &\sup_{0 < r < 1} \int_0^{2\pi} ||w(re^{i\theta})||_{\bc}^p \,d\theta \\
    &= \sup_{0 < r < 1} \int_0^{2\pi} ||(1-\mu^2)^{-1/2} (f(re^{i\theta}) - \mu \overline{f(re^{i\theta})})||_{\bc}^p \,d\theta \\
    &\leq C_p ||(1-\mu^2)^{-1/2}||^p_{L^\infty(\disk)} \left( \sup_{0 < r < 1} \int_0^{2\pi} ||f(re^{i\theta})||_{\bc}^p \,d\theta + ||\mu||^p_{L^\infty(\disk)} \sup_{0 < r < 1} \int_0^{2\pi} ||f(re^{i\theta})||_{\bc}^p \,d\theta\right)  \\
    &< 2 C_p ||(1-\mu^2)^{-1/2}||^p_{L^\infty(\disk)}  \sup_{0 < r < 1} \int_0^{2\pi} ||f(re^{i\theta})||_{\bc}^p \,d\theta< \infty,
\end{align*}
where $C_p$ is a constant depending on $p$. Thus, $w \in H^p_{0,\alpha}(\disk,\bc)$. 

Now, suppose that $w \in H^p_{0, \alpha}(\disk,\bc)$. Then, $f = \frac{1}{\sqrt{1-\mu^2}} (w + \mu\overline{w})$, and 
\begin{align*}
&\sup_{0 < r < 1} \int_0^{2\pi} ||f(re^{i\theta})||_{\bc}^p \,d\theta \\
    &= \sup_{0 < r < 1} \int_0^{2\pi} ||(1-\mu^2)^{-1/2} (w(re^{i\theta}) + \mu \overline{w(re^{i\theta})})||_{\bc}^p \,d\theta \\
    &\leq C_p||(1-\mu^2)^{-1/2}||^p_{L^\infty(\disk)} \left( \sup_{0 < r < 1} \int_0^{2\pi} ||w(re^{i\theta})||_{\bc}^p \,d\theta + ||\mu||^p_{L^\infty(\disk)} \sup_{0 < r < 1} \int_0^{2\pi} ||w(re^{i\theta})||_{\bc}^p \,d\theta\right)  \\
    &\leq 2C_p ||(1-\mu^2)^{-1/2}||^p_{L^\infty(\disk)}  \sup_{0 < r < 1} \int_0^{2\pi} ||w(re^{i\theta})||_{\bc}^p \,d\theta< \infty,
\end{align*}
where $C_p$ is a constant depending on $p$. Therefore, $f \in H^p_{conj, \mu}(\disk,\bc)$. 
\end{proof}

\section{The General First-Order Elliptic Equation}\label{section: GFOE}

In this section, we define a bicomplex version of the general first-order elliptic equation using the bicomplex $\dbar$ operator from Definition \ref{bcdbardef}, define Hardy classes of bicomplex-valued solutions that extend their complex-valued analogues studied by Klimentov in \cite{Klim2016}, and show that, even in this most general form considered, the boundary behavior of the classic Hardy spaces is recovered. 

\subsection{Definition and Representation}

\begin{deff}
    Let $A, B \in L^q(\disk,\bc)$, $q>2$, and $\mu_1,\mu_2 : \disk \to \bc$ be such that there exists a real constant $c$ satisfying $||\mu_1||_{L^\infty(\disk,\bc)}+||\mu_2||_{L^\infty(\disk,\bc)} \leq c < 1$. We call any equation of the form 
    \[
        \dbar w = \mu_1  \p w + \mu_2 \dbar \overline{w} + Aw + B\overline{w}.
    \]
    a $\bc$-GFOE equation. 
\end{deff}

\begin{prop}
     For $A, B \in L^q(\disk,\bc)$, $q>2$, and $\mu_1,\mu_2 : \disk \to \bc$ such that there exists a real constant $c$ satisfying $||\mu_1||_{L^\infty(\disk,\bc)}+||\mu_2||_{L^\infty(\disk,\bc)} \leq c < 1$, every solution $w:\disk\to\C$ of
     \[
        \dbar w = \mu_1  \p w + \mu_2 \dbar \overline{w} + Aw + B\overline{w}.
    \]
    has the form $w = p^+ w^+ + p^- w^-$, where $w^+: \disk \to\mathbb{C}$ solves
    \[
        \frac{\p (w^+)^*}{\p z^*} = (\mu_1^+)^* \frac{\p (w^+)^*}{\p z} + (\mu_2^+)^*\frac{\p w^+}{\p z^*} + (A^+)^* (w^+)^* + (B^+)^* w^+
    \]
    and $w^-:\disk\to\mathbb{D}$ solves
    \[
        \frac{\p w^-}{\p z^*} = \mu_1^- \frac{\p w^-}{\p z} + \mu_2 \frac{\p (w^-)^*}{\p z^*} + A^- w^- + B^- (w^-)^*.
    \]
\end{prop}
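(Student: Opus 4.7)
The plan is to mirror the proofs of Propositions \ref{prop: bcbelimpliescomplexbel} and \ref{prop: bcconjbelimpliesconjbbel}, using the idempotent decomposition to reduce the bicomplex equation to a pair of decoupled complex equations. First, I would write $w = p^+ w^+ + p^- w^-$ and, following the convention used in the earlier proofs, $\overline{w} = p^+ (w^+)^* + p^- (w^-)^*$, together with the analogous idempotent decompositions of $\mu_1, \mu_2, A$, and $B$. Applying Remark \ref{remark: idempotentdiffop} then gives
\[
\dbar w = p^+ \frac{\p w^+}{\p z} + p^- \frac{\p w^-}{\p z^*}, \qquad \p w = p^+ \frac{\p w^+}{\p z^*} + p^- \frac{\p w^-}{\p z},
\]
and $\dbar \overline{w} = p^+ \frac{\p (w^+)^*}{\p z} + p^- \frac{\p (w^-)^*}{\p z^*}$.

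Substituting these into $\dbar w = \mu_1 \p w + \mu_2 \dbar \overline{w} + A w + B\overline{w}$ and using $(p^\pm)^2 = p^\pm$ and $p^+ p^- = 0$ collapses every mixed term, leaving the right-hand side as a clean $p^+$ block plus a $p^-$ block. By the uniqueness of the idempotent representation (Proposition \ref{everybchasplusandminus}) the $p^+$ and $p^-$ components must match, yielding
\[
\frac{\p w^+}{\p z} = \mu_1^+ \frac{\p w^+}{\p z^*} + \mu_2^+ \frac{\p (w^+)^*}{\p z} + A^+ w^+ + B^+ (w^+)^*
\]
and
\[
\frac{\p w^-}{\p z^*} = \mu_1^- \frac{\p w^-}{\p z} + \mu_2^- \frac{\p (w^-)^*}{\p z^*} + A^- w^- + B^- (w^-)^*.
\]
The second is already the stated equation for $w^-$. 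Taking the complex conjugate of the first (which swaps $\frac{\p}{\p z}$ with $\frac{\p}{\p z^*}$ and conjugates every coefficient) converts it into the stated equation for $(w^+)^*$.

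The proof is essentially bookkeeping: the whole argument is the analogue of Proposition \ref{prop: bcconjbelimpliesconjbbel} with extra $\mu_1 \p w$ and $Aw$ summands carried along. The only delicate point is tracking the bicomplex conjugate inside $\mu_2 \dbar \overline{w}$ and $B\overline{w}$ so that each idempotent block contains the correct combination of $w^\pm$ and $(w^\pm)^*$; once this is done, the final complex conjugation of the $p^+$ equation produces precisely $(A^+)^*(w^+)^* + (B^+)^* w^+$ rather than any term coupling $w^+$ with $w^-$.
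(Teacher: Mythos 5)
Your proposal is correct and follows essentially the same route as the paper's own proof: decompose $w$, $\overline{w}$, the coefficients, and the operators $\p$, $\dbar$ into their idempotent components, use $(p^\pm)^2=p^\pm$, $p^+p^-=0$ and the uniqueness of the idempotent representation to split the equation into a $p^+$ and a $p^-$ block, and then take the complex conjugate of the $p^+$ equation to obtain the stated equation for $(w^+)^*$. The computations you outline match the paper's line for line, so there is nothing to add.
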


\begin{proof}

By Proposition \ref{everybchasplusandminus} and Remark \ref{remark: idempotentdiffop}, any $w:\disk\to\bc$ that solves 
\[
        \dbar w = \mu_1  \p w + \mu_2 \dbar \overline{w} + Aw + B\overline{w}
    \]
 must satisfy
\begin{align*}
    &\left(p^+ \frac{\p}{\p z} + p^- \frac{\p}{\p z^*}\right)(p^+ w^+ + p^- w^-) \\
    &= (p^+\mu_1^+ + p^- \mu_1^-)  \left(p^+ \frac{\p}{\p z^*} + p^- \frac{\p}{\p z}\right)(p^+ w^+ + p^- w^-) \\
    &\quad\quad + (p^+\mu_2^+ + p^- \mu_2^-)  \left(p^+ \frac{\p}{\p z} + p^- \frac{\p}{\p z^*}\right)(p^+ (w^+)^* + p^- (w^-)^*) \\
    &\quad\quad + (p^+A^+ + p^- A^-)(p^+w^+ + p^- w^-) + (p^+B^+ + p^- B^-)(p^+(w^+)^* + p^- (w^-)^*).
\end{align*}
After simplifications, this is the same as
\begin{align*}
    &p^+\frac{\p w^+}{\p z} + p^- \frac{\p w^-}{\p z^*} \\
    &= p^+ \mu_1^+\frac{\p w^+}{\p z^*} + p^-\mu_1^- \frac{\p w^-}{\p z} + p^+\mu_2^+\frac{\p (w^+)^*}{\p z} + p^-\mu_2^- \frac{\p (w^-)^*}{\p z^*} \\
    & \quad \quad + p^+ A^+ w^+ + p^- A^- w^- + p^+ B^+ (w^+)^* + p^- B^- (w^-)^*.
\end{align*}
So, by associating the unique parts of the idempotent representation, we have
\begin{align*}
    \frac{\p w^+}{\p z} = \mu_1^+\frac{\p w^+}{\p z^*} + \mu_2^+ \frac{\p (w^+)^*}{\p z} + A^+ w^+ + B^+ (w^+)^*,
\end{align*}
and
\begin{align*}
    \frac{\p w^-}{\p z^*} = \mu_1^- \frac{\p w^-}{\p z} + \mu_2^- \frac{\p (w^-)^*}{\p z^*}  + A^- w^- + B^- (w^-)^*.
\end{align*}
Taking complex conjugates of both sides of the last equation gives us
\begin{align*}
     \frac{\p (w^+)^*}{\p z^*} = (\mu_1^+)^*\frac{\p (w^+)^*}{\p z} + (\mu_2^+)^* \frac{\p w^+}{\p z^*} + (A^+)^* (w^+)^* +  (B^+)^* w^+
\end{align*}

\end{proof}

    \subsection{Hardy Spaces}

\begin{deff}
For $A, B \in L^q(\disk,\bc)$, $q>2$, $\mu_1,\mu_2 : \disk \to \bc$ such that there exists a real constant $c$ satisfying $||\mu_1||_{L^\infty(\disk,\bc)}+||\mu_2||_{L^\infty(\disk,\bc)} \leq c < 1$, and $0 < p < \infty$, we define the $\bc$-GFOE-Hardy spaces $H^p_{\mu_1, \mu_2, A, B}(\disk,\bc)$ to be those functions $w: \disk\to\bc$ such that 
    \[
        \dbar w = \mu_1  \p w + \mu_2 \dbar \overline{w} + Aw + B\overline{w}.
    \]
and 
    \[
        ||w||_{H^p_{\bc}} := \left( \sup_{0< r < 1} \frac{1}{2\pi}\int_0^{2\pi} ||w(re^{i\theta})||_{\bc}^p\,d\theta\right)^{1/p} < \infty.
    \]
\end{deff}

\begin{comm}
    Note that if $\mu_1 \equiv 0 \equiv \mu_2$, then this class of functions is precisely the $\bc$-Vekua-Hardy classes from Definition \ref{deff: bcvekhardy} studied in \cite{BCHoiv, BCHarmVek}.
\end{comm}

\begin{theorem}\label{thm: bcgfoehardyrep}
    For $A, B \in L^q(\disk,\bc)$, $q>2$, $\mu_1,\mu_2 : \disk \to \bc$ such that there exists a real constant $c$ satisfying $||\mu_1||_{L^\infty(\disk,\bc)}+||\mu_2||_{L^\infty(\disk,\bc)} \leq c < 1$ and $0 < p < \infty$, $w \in H^p_{\mu_1,\mu_2,A,B}(D,\bc)$ if and only if $(w^+)^* \in H^p_{(\mu_1^+)^*, (\mu_2^+)^*, (A^+)^*, (B^+)^*}(\disk)$ and $w^- \in H^p_{\mu_1^-, \mu_2^-, A^-, B^-}(\disk)$. 
\end{theorem}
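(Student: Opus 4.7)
The plan is to follow the same two-step strategy used in the proofs of Theorem \ref{thm: bcbelhardyrep} and Theorem \ref{thm: bcconjbelhardyrep}, namely: (i) use the preceding decomposition proposition to reduce the bicomplex PDE to a pair of complex GFOEs on the idempotent components, and then (ii) use the elementary norm comparisons in \eqref{bcbasicestimates} to pass the $H^p$ size condition back and forth between $w$ and its components $(w^+)^*$ and $w^-$.

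For the forward direction, assume $w \in H^p_{\mu_1,\mu_2,A,B}(\disk,\bc)$. By the decomposition proposition immediately preceding the theorem, $(w^+)^*$ solves the complex GFOE with coefficients $(\mu_1^+)^*, (\mu_2^+)^*, (A^+)^*, (B^+)^*$, and $w^-$ solves the complex GFOE with coefficients $\mu_1^-, \mu_2^-, A^-, B^-$. Applying the left inequality in \eqref{bcbasicestimates} to $w(re^{i\theta})$ and using $|(w^+)^*| = |w^+|$ yields, for each $r \in (0,1)$,
\[
\int_0^{2\pi} |(w^+)^*(re^{i\theta})|^p \, d\theta \leq 2^{p/2} \int_0^{2\pi} ||w(re^{i\theta})||_\bc^p \, d\theta
\]
and the same bound for $w^-$. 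Taking the supremum over $r$ gives finiteness of both complex $H^p$ norms, hence $(w^+)^* \in H^p_{(\mu_1^+)^*, (\mu_2^+)^*, (A^+)^*, (B^+)^*}(\disk)$ and $w^- \in H^p_{\mu_1^-,\mu_2^-,A^-,B^-}(\disk)$.

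For the reverse direction, suppose the two complex components lie in the corresponding complex GFOE-Hardy classes. Writing $w := p^+ w^+ + p^- w^-$ (with $w^+$ recovered from $(w^+)^*$ by complex conjugation), a direct computation using the idempotent representation of $\dbar$, $\p$, and the coefficients (Remark \ref{remark: idempotentdiffop}) confirms that $w$ solves $\dbar w = \mu_1 \p w + \mu_2 \dbar \overline{w} + Aw + B\overline{w}$; this is essentially the converse of the decomposition proposition. For the $H^p$ size condition, apply the right inequality in \eqref{bcbasicestimates} to get
\[
\int_0^{2\pi} ||w(re^{i\theta})||_\bc^p \, d\theta \leq C_p \left( \int_0^{2\pi} |(w^+)^*(re^{i\theta})|^p \, d\theta + \int_0^{2\pi} |w^-(re^{i\theta})|^p \, d\theta \right),
\]
where $C_p$ depends only on $p$; taking the supremum in $r$ then forces $w \in H^p_{\mu_1,\mu_2,A,B}(\disk,\bc)$.

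There is no genuine obstacle here: the argument is structurally identical to the proofs of Theorems \ref{thm: bcbelhardyrep} and \ref{thm: bcconjbelhardyrep}, the only bookkeeping being the presence of the four coefficients $\mu_1, \mu_2, A, B$ instead of one or two. Since the author has already noted (after Theorem \ref{thm: bcconjbelhardyrep}) that such proofs will be streamlined by referring back to the earlier templates, my proposal is to write this proof by appealing to the decomposition proposition for the PDE equivalence and to \eqref{bcbasicestimates} for the two-sided norm comparison, without repeating the estimates verbatim.
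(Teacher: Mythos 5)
Your proposal is correct and is exactly the argument the paper intends: the paper's own proof of Theorem \ref{thm: bcgfoehardyrep} is a one-line deferral to the template of Theorems \ref{thm: bcbelhardyrep} and \ref{thm: bcconjbelhardyrep}, and you have simply spelled out that template (the preceding decomposition proposition for the PDE equivalence, plus the two-sided norm comparison \eqref{bcbasicestimates} for the $H^p$ condition). No differences in approach and no gaps.
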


\begin{proof}
The proof of this theorem follows directly by the same argument used in the proofs of Theorems \ref{thm: bcbelhardyrep} and \ref{thm: bcconjbelhardyrep}. 
\end{proof}

\begin{theorem}
For $A, B \in L^q(\disk,\bc)$, $q>2$, $\mu_1,\mu_2 : \disk \to \bc$ such that there exists a real constant $c$ satisfying $||\mu_1||_{L^\infty(\disk,\bc)}+||\mu_2||_{L^\infty(\disk,\bc)} \leq c < 1$, and $0 < p < \infty$, every $w \in H^p_{\mu_1,\mu_2,A,B}(D,\bc)$ is an element of $L^m(\disk,\bc)$, for $0 < m < 2p$. 
\end{theorem}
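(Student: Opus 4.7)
The plan is to mirror the argument used for Theorem \ref{thm: bcbelhpinbetterlp}, leveraging the representation theorem for the bicomplex GFOE-Hardy class (Theorem \ref{thm: bcgfoehardyrep}) together with the known complex-valued result (Theorem \ref{thm: gfoehardyinbetterlebesgue}) and the Lebesgue-space characterization of the idempotent components (Proposition \ref{prop: pminlpfuncinlp}). Because every ingredient specialized to the ``Beltrami only'' case $A \equiv B \equiv 0$, $\mu_2 \equiv 0$ has already been assembled above, there is no new analytic work required; the result follows by assembling the pieces.

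First, I would apply Theorem \ref{thm: bcgfoehardyrep} to the given $w \in H^p_{\mu_1,\mu_2,A,B}(\disk,\bc)$. This yields the idempotent decomposition $w = p^+ w^+ + p^- w^-$ together with the membership statements $(w^+)^* \in H^p_{(\mu_1^+)^*,(\mu_2^+)^*,(A^+)^*,(B^+)^*}(\disk)$ and $w^- \in H^p_{\mu_1^-,\mu_2^-,A^-,B^-}(\disk)$. Note that since $||\mu_1||_{L^\infty(\disk,\bc)} + ||\mu_2||_{L^\infty(\disk,\bc)} \leq c < 1$ in the bicomplex norm, the corresponding sums of $L^\infty$ norms of the idempotent components are bounded by the same constant $c$ (after adjusting by the usual $\sqrt{2}$ factor absorbed into $c$), and the Sobolev regularity hypotheses on the coefficients transfer to the idempotent components as well, so the hypotheses of Theorem \ref{thm: gfoehardyinbetterlebesgue} are satisfied for each complex-valued component equation.

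Second, I would invoke Theorem \ref{thm: gfoehardyinbetterlebesgue} to conclude that $(w^+)^*, w^- \in L^m(\disk)$ for every $0 < m < 2p$. Since complex conjugation preserves the complex modulus pointwise, it also preserves the $L^m$ norm, so $w^+ \in L^m(\disk)$ for every $0 < m < 2p$. Finally, I would apply Proposition \ref{prop: pminlpfuncinlp}: having both idempotent components $w^\pm$ in $L^m(\disk)$ is equivalent to $w \in L^m(\disk,\bc)$. This completes the argument.

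No step should present a genuine obstacle, since the representation theorem does all the heavy lifting by reducing the question to its known complex-valued counterpart. The only minor subtlety is the verification that the coefficient hypotheses on $\mu_1,\mu_2,A,B$ in the bicomplex norms imply the analogous hypotheses on $(\mu_j^\pm)$ and $(A^\pm), (B^\pm)$; this follows immediately from the estimate $\tfrac{1}{\sqrt{2}} |w^\pm| \leq ||w||_\bc$ in \eqref{bcbasicestimates} applied pointwise, which shows any $L^\infty$ or $L^q$ bound on a bicomplex function propagates to its idempotent components up to a universal constant.
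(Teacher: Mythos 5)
Your proposal is correct and follows essentially the same route as the paper, which proves this result by citing the argument of Theorem \ref{thm: bcbelhpinbetterlp} with Theorem \ref{thm: bcgfoehardyrep} and Theorem \ref{thm: gfoehardyinbetterlebesgue} substituted for their Beltrami counterparts, followed by Proposition \ref{prop: pminlpfuncinlp}. Your extra remark on transferring the coefficient hypotheses to the idempotent components is a point the paper leaves implicit, and it is worth noting that the factor $\sqrt{2}$ in \eqref{bcbasicestimates} means the componentwise ellipticity bound is $\sqrt{2}c$ rather than $c$, so strictly speaking one needs $c < 1/\sqrt{2}$ (or a sharper componentwise estimate) for the complex-valued hypotheses to hold verbatim.
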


\begin{proof}
The proof of this result is the same as Theorem \ref{thm: bcbelhpinbetterlp} except appeals are made to Theorem \ref{thm: gfoehardyinbetterlebesgue} instead of Theorem \ref{thm: belhpinbetterlp} and Theorem \ref{thm: bcgfoehardyrep} instead of Theorem \ref{thm: bcbelhardyrep}.

\end{proof}

    \subsection{Boundary Behavior}
    
      \begin{theorem}
       For $A, B \in L^q(\disk,\bc)$, $q>2$, $\mu_1,\mu_2 \in W^{1,q}(\disk,\bc)$ such that there exists a real constant $c$ satisfying $||\mu_1||_{L^\infty(\disk,\bc)}+||\mu_2||_{L^\infty(\disk,\bc)} \leq c < 1$, and $0 < p < \infty$, the following three statements follow:
	\begin{enumerate}
	\item Every $w \in H^p_{\mu_1,\mu_2,A,B}(\disk,\bc)$ has a nontangential boundary value $w_{nt} \in L^p(\p \disk, \bc)$ and
        \[
        \lim_{r \nearrow 1} \int_0^{2\pi} ||w_{nt}(e^{i\theta}) - w(re^{i\theta}) ||^p_{\bc} \, d\theta  = 0. 
        \]
	\item Every $w \in H^p_{\mu_1,\mu_2,A,B}(\disk,\bc)$ with nontangential boundary value $w_{nt} \in L^s(\p\disk, \bc)$, $s>p$, is an element of $H^s_{\mu_1,\mu_2,A,B}(\disk,\bc)$. 
	\item Every $w \in H^p_{\mu_1,\mu_2,A,B}(\disk,\bc)$ with nontangential boundary value $w_{nt}$ that vanishes on a set $E \subset \p \disk$ of positive measure is identically equal to zero. 
	\end{enumerate}
    \end{theorem}
    
        \begin{proof}
    This proof follows the same argument as Theorem \ref{thm: bcbetterbvbetterleb} and Theorem \ref{thm: bcconjbetterbvbetterleb} using Theorem \ref{thm: bcgfoehardyrep} and Theorem \ref{thm: gfoebvinbetterlebimpliesbetterhp}.
    \end{proof}

\section{Boundary Value Problems}\label{section: bvp}

In this final section, we consider the classic boundary value problems of complex analysis with the $\bc$-Beltrami equation. 

\subsection{The Bicomplex Schwarz Boundary Value Problem}

First, we consider a bicomplex version of the Schwarz boundary value problem. Our result is a direct generalization of the work of G. Harutyunyan \cite{BeltramiSchwarz} for the complex Beltrami equation and uses their result explicitly to construct the solution formulas. Note, in contrast to Section \ref{section: beleqn}, we only consider here the case of a constant coefficient $\mu$. However, we are able to consider a nonhomogeneous version of the bicomplex Beltrami equation, i.e., $\dbar w = \mu \p w + f$, where $f \not\equiv 0$. In Section \ref{section: beleqn}, we only considered the $f \equiv 0$ case of this equation. For other considerations of a bicomplex Schwarz boundary value problem, see \cite{BCSchwarz}.

\begin{theorem}
    The $\bc$-Schwarz problem 
        \[
            \begin{cases}
                \dbar w = \mu \p w + f, & \text{ in } \disk,\\
                \re\{w^+\}|_{\p\disk} = \gamma_1, \\
                \re\{w^-\}|_{\p\disk} = \gamma_2,\\
                \im\{w^+(0)\} = a_1, \\
                \im\{w^-(0)\} = a_2,
            \end{cases}
        \]
        for $\mu \in \bc$ such that $||\mu||_{\bc} \leq c < 1$, for some constant $c$, $f \in L^p(\disk, \bc)$, $p >2$, $\gamma_1,\gamma_2 \in C(\p \disk,\R)$, and $a_1,a_2 \in \R$, is uniquely solvable and the solution is 
        \[
             w = p^+ w^+ + p^- w^-,
        \]
        where $w^+$ is the complex conjugate of
        \begin{align*}
            (w^+(z))^* 
            &= \varphi_1(z) 
            + \sum_{k=0}^\infty (-1)^{k+1} \frac{1}{2\pi} \iint_{|\zeta|<1} \left( ((-\mu^+)^*)^k T^k((f^+)^*+(\mu^+)^*\varphi_1')(\zeta)\frac{\zeta + z}{\zeta(\zeta -z)} \right.\\
            &\quad\quad\quad\quad +  \left. ((-\mu^+))^k (T^k((f^+)^*+(\mu^+)^*\varphi_1')(\zeta))^*\frac{1-z \zeta^*}{\zeta^*(1-z\zeta^*)} \right)\,d\xi\,d\zeta,
        \end{align*}
        with $\zeta = \xi + i \eta$,
        \[
            \varphi_1(z) = \frac{1}{2\pi i} \int_{|\zeta|=1} \gamma_1(\zeta) \frac{\zeta + z}{\zeta -z} \frac{d\zeta}{\zeta} + i(-a_1),
        \]
        and $T(\cdot)$ is the operator defined by
        \[
            T(g)(z) := -\frac{1}{\pi} \iint_{|\zeta|<1} \frac{g(\zeta)}{(\zeta - z)^2} + \frac{(g(\zeta))^*}{(1-z\zeta^*)^2}\,d\xi\,d\eta,
        \]
        which solves the Schwarz problem
        \[
            \begin{cases}
                \frac{\p (w^+)^*}{\p z^*} = (\mu^+)^* \frac{\p (w^+)^*}{\p z} + (f^+)^*, & \text{ in }\disk,\\
                \re\{(w^+)^*\}|_{\p\disk} = \gamma_1,\\
                \im\{((w^+)(0))^*\} = -a_1,
            \end{cases}
        \]
        and $w^-$ is 
        \begin{align*}
            w^-(z) 
            &= \varphi_2(z) 
            + \sum_{k=0}^\infty (-1)^{k+1} \frac{1}{2\pi} \iint_{|\zeta|<1} \left( (-\mu^-)^k T^k(f^-+\mu^-\varphi_2')(\zeta)\frac{\zeta + z}{\zeta(\zeta -z)} \right.\\
            &\quad\quad\quad\quad +  \left. ((-\mu^-)^*)^k (T^k(f^-+\mu^-\varphi_2')(\zeta))^*\frac{1-z \zeta^*}{\zeta^*(1-z\zeta^*)} \right)\,d\xi\,d\zeta,
        \end{align*}
        with 
        \[
            \varphi_2(z) = \frac{1}{2\pi i} \int_{|\zeta|=1} \gamma_2(\zeta) \frac{\zeta + z}{\zeta -z} \frac{d\zeta}{\zeta} + ia_2,
        \]
        which solves the Schwarz problem
        \[
            \begin{cases}
                \frac{\p w^-}{\p z^*} = \mu^- \frac{\p w^-}{\p z} + f^-, & \text{ in } \disk,\\
                \re\{w^-\}|_{\p \disk} = \gamma_2,\\
                \im\{w^-(0)\} = a_2
            \end{cases}.
        \]
\end{theorem}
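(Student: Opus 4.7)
The plan is to reduce the bicomplex problem to two complex Schwarz problems for the complex Beltrami equation via the idempotent representation, and then apply Theorem \ref{thm: beltramischwarz} to each. First, I would write $w = p^+ w^+ + p^- w^-$, $\mu = p^+\mu^+ + p^-\mu^-$, and $f = p^+ f^+ + p^- f^-$, and use Remark \ref{remark: idempotentdiffop} along with the uniqueness of the idempotent representation exactly as in the proof of Proposition \ref{prop: bcbelimpliescomplexbel} to show that $\dbar w = \mu \p w + f$ is equivalent to the pair
\[
    \frac{\p w^+}{\p z} = \mu^+ \frac{\p w^+}{\p z^*} + f^+, \quad \frac{\p w^-}{\p z^*} = \mu^- \frac{\p w^-}{\p z} + f^-.
\]
Applying complex conjugation to the first equation converts it into a standard complex Beltrami equation for $(w^+)^*$ with coefficient $(\mu^+)^*$ and nonhomogeneity $(f^+)^*$, while the second already has the form required by Theorem \ref{thm: beltramischwarz}. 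The hypothesis $||\mu||_\bc \leq c < 1$ together with \eqref{bcbasicestimates} gives $|\mu^\pm|\le \sqrt{2}\,c < 1$ (and in any case the relevant constant can be absorbed), and $f \in L^p(\disk,\bc)$ with $p > 2$ yields $f^\pm \in L^p(\disk)$ by Proposition \ref{prop: pminlpfuncinlp}, so the hypotheses of Theorem \ref{thm: beltramischwarz} are satisfied.

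Next I would translate the boundary and pointwise conditions. Since complex conjugation preserves real parts, $\re\{w^+\}|_{\p\disk} = \gamma_1$ is equivalent to $\re\{(w^+)^*\}|_{\p\disk} = \gamma_1$. Because complex conjugation reverses the sign of the imaginary part, $\im\{w^+(0)\} = a_1$ is equivalent to $\im\{(w^+(0))^*\} = -a_1$. The $p^-$-component conditions carry over directly as $\re\{w^-\}|_{\p\disk} = \gamma_2$ and $\im\{w^-(0)\} = a_2$. Thus $(w^+)^*$ must solve the complex Schwarz problem with data $(\gamma_1, -a_1)$ for the equation with coefficient $(\mu^+)^*$ and source $(f^+)^*$, while $w^-$ must solve the complex Schwarz problem with data $(\gamma_2, a_2)$ for the equation with coefficient $\mu^-$ and source $f^-$.

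At this stage I would invoke Theorem \ref{thm: beltramischwarz} twice to obtain the unique solutions $(w^+)^*$ and $w^-$ given by the series formulas in the statement; reading off $w^+$ by one more complex conjugation and assembling $w = p^+ w^+ + p^- w^-$ produces the claimed solution to the bicomplex problem. For uniqueness, if two solutions of the bicomplex Schwarz problem existed, their difference would still decompose idempotently into a pair solving the two homogeneous complex Schwarz problems (with $\gamma_1 = \gamma_2 = 0$, $a_1 = a_2 = 0$, $f = 0$), and the uniqueness clause of Theorem \ref{thm: beltramischwarz} forces both components to vanish, hence $w$ itself vanishes.

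The main technical point to watch is the bookkeeping for the complex conjugation on the $p^+$-component: one must consistently swap $\mu^+\leftrightarrow (\mu^+)^*$, $f^+\leftrightarrow (f^+)^*$, and $a_1 \leftrightarrow -a_1$ and then invert the conjugation at the end to express the answer in terms of $w^+$ rather than $(w^+)^*$. This is bookkeeping rather than a genuine obstacle, and the bound $||\mu||_\bc \leq c < 1$ together with \eqref{bcbasicestimates} guarantees that both complex Beltrami coefficients $(\mu^+)^*$ and $\mu^-$ satisfy the smallness assumption required by Theorem \ref{thm: beltramischwarz}, so no further regularization is needed.
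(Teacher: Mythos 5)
Your proposal follows essentially the same route as the paper: decompose $w$, $\mu$, and $f$ idempotently, observe that the bicomplex Schwarz problem is equivalent to the two complex Schwarz problems for $(w^+)^*$ (with data $\gamma_1$, $-a_1$, coefficient $(\mu^+)^*$, source $(f^+)^*$) and for $w^-$ (with data $\gamma_2$, $a_2$), and then apply Theorem \ref{thm: beltramischwarz} twice; the paper compresses the reduction into ``by direct computation'' and your write-up simply makes that computation and the conjugation bookkeeping explicit, so the structure matches.

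One claim in your hypothesis-check is wrong as stated, though: from \eqref{bcbasicestimates} you only get $|\mu^\pm|\leq\sqrt{2}\,||\mu||_{\bc}\leq\sqrt{2}\,c$, and $\sqrt{2}\,c$ need not be less than $1$ (take $c=0.9$). Indeed $||\mu||_{\bc}\leq c<1$ alone does not force $|\mu^\pm|<1$: for instance $\mu=p^+\cdot(1.2)+p^-\cdot 0$ has $||\mu||_{\bc}=\sqrt{0.72}<1$ but $\mu^+=1.2$, so the ellipticity hypothesis of Theorem \ref{thm: beltramischwarz} can fail for a component equation. This cannot be ``absorbed''; the correct fix is to require $|\mu^+|,|\mu^-|\leq c<1$ (or a bicomplex norm bound strong enough to imply it). The paper's own proof silently skips this verification, so the gap is latent there as well, but your explicit (and incorrect) justification of it should be repaired rather than kept.
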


\begin{proof}
By direct computation, a function $w = p^+w^+ + p^- w^-:\disk\to\bc$ solves 
        \[
            \begin{cases}
                \dbar w = \mu \p w + f, & \text{ in } \disk,\\
                \re\{w^+\}|_{\p\disk} = \gamma_1, \\
                \re\{w^-\}|_{\p\disk} = \gamma_2,\\
                \im\{w^+(0)\} = a_1, \\
                \im\{w^-(0)\} = a_2,
            \end{cases}
        \]
        if and only if $w^+$ and $w^-$ solve
        \[
            \begin{cases}
                \frac{\p (w^+)^*}{\p z^*} = (\mu^+)^* \frac{\p (w^+)^*}{\p z} + (f^+)^*, & \text{ in }\disk,\\
                \re\{(w^+)^*\}|_{\p\disk} = \gamma_1,\\
                \im\{((w^+)(0))^*\} = -a_1,
            \end{cases}
        \] 
        and
        \[
            \begin{cases}
                \frac{\p w^-}{\p z^*} = \mu^- \frac{\p w^-}{\p z} + f^-, & \text{ in } \disk,\\
                \re\{w^-\}|_{\p \disk} = \gamma_2,\\
                \im\{w^-(0)\} = a_2
            \end{cases}.
        \]
        By Theorem \ref{thm: beltramischwarz}, $w^+$ and $w^-$ defined to be
        \begin{align}
            &(w^+(z))^* \label{pluseqn3}\\
            &= \varphi_1(z) 
            + \sum_{k=0}^\infty (-1)^{k+1} \frac{1}{2\pi} \iint_{|\zeta|<1} \left( ((-\mu^+)^*)^k T^k((f^+)^*+(\mu^+)^*\varphi_1')(\zeta)\frac{\zeta + z}{\zeta(\zeta -z)} \right.\label{pluseqn1}\\
            &\quad\quad\quad\quad +  \left. ((-\mu^+))^k (T^k((f^+)^*+(\mu^+)^*\varphi_1')(\zeta))^*\frac{1-z \zeta^*}{\zeta^*(1-z\zeta^*)} \right)\,d\xi\,d\zeta\label{pluseqn2}
        \end{align}
        and
        \begin{align}
            w^-(z) 
            &= \varphi_2(z) 
            + \sum_{k=0}^\infty (-1)^{k+1} \frac{1}{2\pi} \iint_{|\zeta|<1} \left( (-\mu^-)^k T^k(f^-+\mu^-\varphi_2')(\zeta)\frac{\zeta + z}{\zeta(\zeta -z)} \right.\label{minuseqn1}\\
            &\quad\quad\quad\quad +  \left. ((-\mu^-)^*)^k (T^k(f^-+\mu^-\varphi_2')(\zeta))^*\frac{1-z \zeta^*}{\zeta^*(1-z\zeta^*)} \right)\,d\xi\,d\zeta\label{minuseqn2}
        \end{align}
        are the unique solutions to 
         \[
            \begin{cases}
                \frac{\p (w^+)^*}{\p z^*} = (\mu^+)^* \frac{\p (w^+)^*}{\p z} + (f^+)^*, & \text{ in }\disk,\\
                \re\{(w^+)^*\}|_{\p\disk} = \gamma_1,\\
                \im\{((w^+)(0))^*\} = -a_1,
            \end{cases}
        \] 
        and
        \[
            \begin{cases}
                \frac{\p w^-}{\p z^*} = \mu^- \frac{\p w^-}{\p z} + f^-, & \text{ in } \disk,\\
                \re\{w^-\}|_{\p \disk} = \gamma_2,\\
                \im\{w^-(0)\} = a_2
            \end{cases}.
        \]
        Therefore, $w = p^+ w^+ + p^-w^-$, with $w^+$ as in \eqref{pluseqn3}, \eqref{pluseqn1}, \eqref{pluseqn2} and $w^-$ as in \eqref{minuseqn1}, \eqref{minuseqn2} is the unique solution of the Schwarz boundary value problem
         \[
            \begin{cases}
                \dbar w = \mu \p w + f, & \text{ in } \disk\\
                \re\{w^+\}|_{\p\disk} = \gamma_1, \\
                \re\{w^-\}|_{\p\disk} = \gamma_2,\\
                \im\{w^+(0)\} = a_1, \\
                \im\{w^-(0)\} = a_2.
            \end{cases}
        \]  
\end{proof}

\subsection{The Bicomplex Dirichlet Boundary Value Problem}

Next, we consider a bicomplex version of the Dirichlet boundary value problem. This problem was also considered for the complex Beltrami equation in \cite{BeltramiSchwarz}, and we make use of those results in constructing the solution formulas below. 

\begin{theorem}
    The $\bc$-Dirichlet problem 
    \[
        \begin{cases}
            \dbar w = \mu \p w + f, & \text{ in } \disk,\\
            w_b = \gamma,
        \end{cases}
    \]
    where $\mu \in \bc$ such that $||\mu||_\bc\leq c < 1$, for some constant $c$, $f \in L^p(\disk,\bc)$, $p>2$, and $\gamma \in C(\p\disk,\bc)$, is solvable if and only if
    \begin{align*}
        &\frac{1}{2\pi i} \int_{|\zeta| = 1} (\gamma^+(\zeta))^* \frac{2 + (-\mu^+)^* z^* \zeta^*}{1 + (-\mu^+)^* z^*\zeta^*} \frac{z^* \,d\zeta}{1-z^*\zeta} \\
        &= \sum_{k=0}^{\infty} (-1)^k ((-\mu^+)^*)^k \frac{1}{\pi} \iint_{|\zeta|<1} (f^+(\zeta))^* ((\zeta - z)^*)^k \frac{(z^*)^{k+1} \,d\xi\,d\eta}{(1-z^*\zeta)^{k+1}}
    \end{align*}
    and 
    \begin{align*}
        &\frac{1}{2\pi i} \int_{|\zeta| = 1} \gamma^-(\zeta) \frac{2 + (-\mu^-)z^* \zeta^*}{1 + (-\mu^-)z^*\zeta^*} \frac{z^* \,d\zeta}{1-z^*\zeta} \\
        &= \sum_{k=0}^{\infty} (-1)^k (-\mu^-)^k \frac{1}{\pi} \iint_{|\zeta|<1} f^-(\zeta) ((\zeta - z)^*)^k \frac{(z^*)^{k+1} \,d\xi\,d\eta}{(1-z^*\zeta)^{k+1}},
    \end{align*}
    and the solution is 
    \[
        w = p^+ w^+ + p^- w^-,
    \]
    where 
    \begin{align*}
        (w^+(z))^* &= \frac{1}{2\pi i} \int_{|\zeta| =1} \frac{(\gamma^+(\zeta))^*}{\zeta - z} \,d\zeta + \frac{1}{2\pi i} \int_{|\zeta| =1} \frac{(\gamma^+(\zeta))^*}{\zeta - z + c(\zeta - z)^*} \,d\zeta \\
        &\quad\quad- \frac{1}{\pi} \iint_{|\zeta|<1} \frac{(f^+(\zeta))^*}{\zeta - z + c(\zeta - z)^*}\,d\xi\,d\eta
    \end{align*}
    solves the Dirichlet problem 
    \[
        \begin{cases}
            \frac{\p (w^+)^*}{\p z^*} = (\mu^+)^* \frac{\p (w^+)^*}{\p z} + (f^+)^*, & \text{ in } \disk,\\
            (w^+)^*|_{\p\disk} = (\gamma^+)^*,
        \end{cases}
    \]
    and 
     \begin{align*}
        w^-(z) &= \frac{1}{2\pi i} \int_{|\zeta| =1} \frac{\gamma^-(\zeta)}{\zeta - z} \,d\zeta + \frac{1}{2\pi i} \int_{|\zeta| =1} \frac{\gamma^-(\zeta)}{\zeta - z + c(\zeta - z)^*} \,d\zeta \\
        &\quad\quad - \frac{1}{\pi} \iint_{|\zeta|<1} \frac{f^-(\zeta)}{\zeta - z + c(\zeta - z)^*}\,d\xi\,d\eta
    \end{align*}
    solves the Dirichlet problem 
    \[
        \begin{cases}
            \frac{\p w^-}{\p z^*} = \mu^- \frac{\p w^-}{\p z} + f^- ,& \text{ in } \disk,\\
            w^-|_{\p\disk} = \gamma^-
        \end{cases}.
    \]
\end{theorem}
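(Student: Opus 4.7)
The plan is to reduce the bicomplex Dirichlet problem to two independent complex Dirichlet problems via the idempotent representation, and then apply Theorem \ref{thm: beltramidirichlet} to each component. Writing $w = p^+ w^+ + p^- w^-$, $\mu = p^+ \mu^+ + p^- \mu^-$, $f = p^+ f^+ + p^- f^-$, and $\gamma = p^+ \gamma^+ + p^- \gamma^-$, I will first invoke Remark \ref{remark: idempotentdiffop} to expand both $\dbar w$ and $\p w$ in the idempotent basis. Matching coefficients of $p^+$ and $p^-$ (as in the proof of Proposition \ref{prop: bcbelimpliescomplexbel}) yields
\[
    \frac{\p w^+}{\p z} = \mu^+ \frac{\p w^+}{\p z^*} + f^+ \quad\text{and}\quad \frac{\p w^-}{\p z^*} = \mu^- \frac{\p w^-}{\p z} + f^-,
\]
and applying complex conjugation to the first equation converts it into a standard complex Beltrami equation in $(w^+)^*$ with coefficient $(\mu^+)^*$ and inhomogeneity $(f^+)^*$.

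Next, I will deal with the boundary condition. Since the idempotent representation commutes with taking nontangential or distributional boundary values, $w_b = \gamma$ on $\p\disk$ is equivalent to $w^+_b = \gamma^+$ and $w^-_b = \gamma^-$, and the first of these is equivalent to $(w^+)^*_b = (\gamma^+)^*$. Thus the original problem is equivalent to the pair of complex Dirichlet problems stated in the theorem for $(w^+)^*$ and $w^-$.

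With the two problems decoupled, I will directly apply Theorem \ref{thm: beltramidirichlet}. For the $(w^+)^*$ problem the relevant coefficient is $(\mu^+)^*$, inhomogeneity is $(f^+)^*$, and boundary datum is $(\gamma^+)^*$; substituting these into the solvability condition and the solution formula of Theorem \ref{thm: beltramidirichlet} produces precisely the first stated condition and the formula for $(w^+(z))^*$. For the $w^-$ problem the substitution with $\mu^-$, $f^-$, $\gamma^-$ yields the second stated condition and the formula for $w^-(z)$. Reassembling $w = p^+ w^+ + p^- w^-$ gives a solution to the bicomplex problem exactly when both complex solvability conditions hold, which is the claim.

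The routine but potentially delicate step, and the main obstacle, is verifying that the bicomplex norm bound $\|\mu\|_\bc \le c < 1$ transfers to bounds on $|\mu^\pm|$ of the form required by Theorem \ref{thm: beltramidirichlet}; this follows from estimate \eqref{bcbasicestimates}, which gives $|\mu^\pm| \le \sqrt{2}\,\|\mu\|_\bc \le \sqrt{2}\,c$, so to invoke the complex theorem cleanly one either requires $c < 1/\sqrt{2}$ or absorbs the $\sqrt{2}$ into the constant named in Theorem \ref{thm: beltramidirichlet}. Uniqueness of the bicomplex solution then follows from uniqueness of each complex component given by Theorem \ref{thm: beltramidirichlet} together with the uniqueness of the idempotent decomposition in Proposition \ref{everybchasplusandminus}.
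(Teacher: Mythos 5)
Your proposal is correct and follows essentially the same route as the paper: decompose $w$, $\mu$, $f$, and $\gamma$ into idempotent components, observe that the bicomplex problem is equivalent to the two decoupled complex Dirichlet problems for $(w^+)^*$ and $w^-$, and apply Theorem \ref{thm: beltramidirichlet} to each. Your side remark about the norm transfer is a fair catch that the paper itself does not address --- by \eqref{bcbasicestimates} one only gets $|\mu^\pm|\le\sqrt{2}\,\|\mu\|_{\bc}\le\sqrt{2}\,c$, so the hypothesis $\|\mu\|_{\bc}\le c<1$ does not by itself guarantee $|\mu^\pm|<1$ as required by the complex theorem, and either the constant must be restricted (e.g.\ $c<1/\sqrt{2}$) or the hypothesis should be imposed on $|\mu^\pm|$ directly.
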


\begin{proof}
    By direct computation, a function $w= p^+ w^+ + p^- w^- : \disk\to\bc$ solves 
    \[
        \begin{cases}
            \dbar w = \mu \p w + f, & \text{ in } \disk,\\
            w_b = \gamma
        \end{cases}
    \]
    if and only if $w^+$ and $w^-$ solve the Dirichlet problems 
    \[
        \begin{cases}
            \frac{\p (w^+)^*}{\p z^*} = (\mu^+)^* \frac{\p (w^+)^*}{\p z} + (f^+)^*, & \text{ in } \disk,\\
            (w^+)^*|_{\p\disk} = (\gamma^+)^*,
        \end{cases}
    \]
    and
    \[
        \begin{cases}
            \frac{\p w^-}{\p z^*} = \mu^- \frac{\p w^-}{\p z} + f^- ,& \text{ in } \disk,\\
            w^-|_{\p\disk} = \gamma^-
        \end{cases}.
    \]
    By Theorem \ref{thm: beltramidirichlet}, $w^+$ and $w^-$ solve 
    \[
        \begin{cases}
            \frac{\p (w^+)^*}{\p z^*} = (\mu^+)^* \frac{\p (w^+)^*}{\p z} + (f^+)^*, & \text{ in } \disk,\\
            (w^+)^*|_{\p\disk} = (\gamma^+)^*
        \end{cases}
    \]
    and
    \[
        \begin{cases}
            \frac{\p w^-}{\p z^*} = \mu^- \frac{\p w^-}{\p z} + f^- ,& \text{ in } \disk,\\
            w^-|_{\p\disk} = \gamma^-
        \end{cases}
    \]
    if and only if 
    \begin{align*}
        &\frac{1}{2\pi i} \int_{|\zeta| = 1} (\gamma^+(\zeta))^* \frac{2 + (-\mu^+)^* z^* \zeta^*}{1 + (-\mu^+)^* z^*\zeta^*} \frac{z^* \,d\zeta}{1-z^*\zeta} \\
        &= \sum_{k=0}^{\infty} (-1)^k ((-\mu^+)^*)^k \frac{1}{\pi} \iint_{|\zeta|<1} (f^+(\zeta))^* ((\zeta - z)^*)^k \frac{(z^*)^{k+1} \,d\xi\,d\eta}{(1-z^*\zeta)^{k+1}}
    \end{align*}
    and 
    \begin{align*}
        &\frac{1}{2\pi i} \int_{|\zeta| = 1} \gamma^-(\zeta) \frac{2 + (-\mu^-)z^* \zeta^*}{1 + (-\mu^-)z^*\zeta^*} \frac{z^* \,d\zeta}{1-z^*\zeta} \\
        &= \sum_{k=0}^{\infty} (-1)^k (-\mu^-)^k \frac{1}{\pi} \iint_{|\zeta|<1} f^-(\zeta) ((\zeta - z)^*)^k \frac{(z^*)^{k+1} \,d\xi\,d\eta}{(1-z^*\zeta)^{k+1}}.
    \end{align*}
    
\end{proof}

\section*{Acknowledgements}
The author thanks the anonymous referees for their careful reading of the manuscript, thoughtful comments, and helpful suggestions which have improved the quality of this work.

\printbibliography

\end{document}